\documentclass[10pt]{amsart}

\usepackage{mathtools}
\usepackage{amsthm}
\usepackage{mathrsfs} 
\usepackage{amssymb}
\usepackage[all]{xy}
\usepackage[nobottomtitles*]{titlesec}
\usepackage{titletoc}
\usepackage{bbm}
\usepackage{upgreek}
\usepackage{ marvosym}
\usepackage{multicol}
\usepackage{lscape}
\usepackage{ifluatex}
\usepackage[pdfencoding=auto, psdextra]{hyperref}
\usepackage[noabbrev,nameinlink]{cleveref}
\usepackage{enumerate}
\usepackage{soul}
\usepackage{stmaryrd}

\usepackage{verbatim}
\usepackage{tikz-cd}
\makeatletter
\tikzcdset{
    cong/.style={"\cong" {sloped, description, yshift=0pt,#1}, phantom},
    simeq/.style={"\simeq" {sloped, description, yshift=0pt,#1}, phantom},
    snake/.style={
        out= east, in=west,
        to path={
            \pgfextra{
                \pgfextractx{\pgf@xa}{\pgfpointanchor{\tikztostart}{east}}
                \pgfextractx{\pgf@xb}{\pgfpointanchor{\tikztotarget}{west}}
                \pgfextracty{\pgf@ya}{\pgfpointanchor{\tikztostart}{center}}
                \pgfextracty{\pgf@yb}{\pgfpointanchor{\tikztotarget}{center}}
                \edef\tikzstartx{\the\pgf@xa}
                \edef\tikzendx{\the\pgf@xb}
                \edef\midy{\the\dimexpr0.5\dimexpr\pgf@ya\relax +0.5\dimexpr\pgf@yb\relax}
            }
            to[in=0,out=180,looseness=0.5] (\tikzstartx,\midy)
            -| ([xshift=-2ex]\tikztotarget.west)
            -- (\tikztotarget)}
    }
}
\makeatother
\usepackage{sseq}
\usepackage{tikzcdintertext}
\usepackage{etoolbox}
\usepackage{xpatch}

\newcommand{\cal}{\mathcal}

\crefname{diagram}{diagram}{diagrams}
\creflabelformat{diagram}{#2(#1)#3}
\crefname{sseq}{}{}%{Spectral Sequence}{Spectral Sequences}
\crefname{equation}{}{}
\crefname{equation-b}{Equation}{Equations}
\creflabelformat{sseq}{#2(#1)#3}
%%%%%
\usepackage[normalem]{ulem}
\newcommand{\stkout}[1]{\ifmmode\text{\sout{\ensuremath{#1}}}\else\sout{#1}\fi}
%%%%%

\makeatletter

\pretocmd{\maketitle}{%
    \edef\@title{\unexpanded{\protect\Large}\unexpanded\expandafter{\@title}}%
    \edef\authors{\unexpanded{\protect\normalsize}\unexpanded\expandafter{\authors}}%
}{}{\error}

\def\addlabeltolink#1{\addlabeltolink@#1}
\def\addlabeltolink@#1#2#3#4{#1{#2}{#3}{\thecontentslabel. #4}}

\dottedcontents{section}[0em]{}{2.3em}{1pc}
\dottedcontents{section}[3.8em]{}{2.3em}{1pc}

\titleformat{\section}[block]{\centering\bfseries\Large}{\thetitle. }{0pt}{}

\def\@secnumpunct{. }
\xpatchcmd{\proof}{\topsep6\p@\@plus6\p@\relax}{\topsep0pt\relax}{}{\error}
\makeatother
\usepackage[parfill]{parskip}

\allowdisplaybreaks[1]

\newcommand{\Set}{{\sf \mathcal{S}et}}

\newcommand\Einfty{\mathbb{E}_{\infty}}

\newcommand{\ull}[1]{\underline{#1}}

\newcommand{\mr}[1]{\mathrm{#1}}
\usepackage[nobottomtitles*]{titlesec}
\usepackage{titletoc}

\newcommand{\pic}{{\sf pic}}
\renewcommand{\psi}{\uppsi}

\newcommand{\PP}{\mathbb{P}}

\newcommand{\mJJ}{\mathfrak{J}}

%\rm{\mathcal{S}}

\newcommand{\bo}{{\sf bo}}
\newcommand{\ko}{{\sf ko}}
\newcommand{\ku}{{\sf ku}}
\newcommand{\kr}{{\sf kr}}

\newcommand{\R}{\mr{R}}

\newcommand{\KU}{\mr{KU}}

\newcommand{\RO}{\mr{RO}}

\newcommand*{\rom}[1]{\expandafter\@slowromancap\romannumeral #1@} \makeatother

%%%%%%%THEOREMS AND HYPERREFS%%%%%%%%%

\hypersetup{%
  bookmarksnumbered=true,%
  bookmarks=true,%
  colorlinks=true,%
%  linkcolor=blue,%
%  citecolor=blue,%
%  filecolor=blue,%
%  menucolor=blue,%
%  pagecolor=blue,%
%  urlcolor=blue,%
  pdfnewwindow=true,%
  pdfstartview=FitBH%
}

\hypersetup{
    colorlinks,
    linkcolor={DarkBrown},%{red!30!black},
    citecolor={DarkBlue},
    urlcolor={blue!80!black}
}

\def\@url#1{{\tt\def~{\lower3.5pt\hbox{\char'176}}\def\_{\char'137}#1}}

\makeatletter
\let\c@lemma\c@theorem
\makeatother

%%%%%%% for numbered theorems %%%%%%%%%
\newtheorem{thm}[equation]{Theorem}

\newtheorem{lem}[equation]{Lemma}

\newtheorem{prop}[equation]{Proposition}

\theoremstyle{definition}
\newtheorem{defn}[equation]{Definition}

\newtheorem{ex}[equation]{Example}

\newtheorem{rmk}[equation]{Remark}

\newtheorem{notn}[equation]{Notation}

\newtheorem*{thm*}{Theorem}
\newtheorem*{cor*}{Corollary}
\newtheorem*{lem*}{Lemma}
\newtheorem*{prop*}{Proposition}
\newtheorem*{not*}{Notation}
\newtheorem*{guess*}{Guess}

\newtheorem*{defn*}{Definition}
\newtheorem*{ex*}{Example}
\newtheorem*{exs*}{Examples}
\newtheorem*{rmk*}{Remark}
\newtheorem*{claim*}{Claim}
\newtheorem*{exer*}{Exercise}

\numberwithin{equation}{section}%subsection
\numberwithin{figure}{section}

\makeatletter
\let\c@lem=\c@thm
\let\c@cor=\c@thm
\let\c@prop=\c@thm
\let\c@lem=\c@thm
\let\c@ex=\c@thm
\let\c@exs=\c@thm
\let\c@obs=\c@thm
\let\c@rmk=\c@thm
\let\c@perthm=\c@thm
\let\c@conjtel=\c@thm
\let\c@exmps=\c@thm
\let\c@rem=\c@thm
\let\c@question=\c@thm
\let\c@warn=\c@thm
\let\c@claim=\c@thm
\let\c@quest=\c@thm
\let\c@notation=\c@thm
\let\c@note=\c@thm
\let\c@conjtel=\c@thm
\let\c@gue=\c@thm
\let\c@goal=\c@thm
\makeatother

%%%%%%%%%%%%%%%%%%%%%%%%%%%%%%%%%%%%%%%%

\DeclareMathOperator{\Hom}{Hom}

\DeclareMathOperator{\Res}{Res}

\DeclareMathOperator*{\Gcolim}{G-colim}

\DeclareMathOperator{\Sp}{\mathcal{S}p}

\DeclareMathOperator{\Top}{\mathcal{T}op}

 %Thom space
\DeclareMathOperator{\gl}{gl_1}

\DeclareMathOperator{\bgl}{bgl_1}

\DeclareMathOperator{\res}{res}
\DeclareMathOperator{\tr}{tr}

\newcommand{\All}{\cal{A}\ell \ell}

\def\makecommands#1#2#3{
    \bgroup
    \def\tempcmdname##1{#1}
    \def\tempcmdbody##1{#2}
    \def\\##1{\expandafter\xdef\csname\tempcmdname{##1}\endcsname{\unexpanded\expandafter{\tempcmdbody{##1}}}}
    #3
    \egroup
}
\def\upperalphabet{\\A\\B\\C\\D\\E\\F\\G\\H\\I\\J\\K\\L\\M\\N\\O\\P\\Q\\R\\S\\T\\U\\V\\W\\X\\Y\\Z}
\def\loweralphabet{\\a\\b\\c\\d\\e\\f\\g\\h\\i\\j\\k\\l\\m\\n\\o\\p\\q\\r\\s\\t\\u\\v\\w\\x\\y\\z}
\def\lowergreekalphabet{\\\alpha\\\beta\\\gamma\\\delta\\\epsilon\\\zeta\\\eta\\\theta\\\kappa\\\lambda\\\mu\\\nu
    \\\xi\\\pi\\\rho\\\sigma\\\tau\\\upsilon\\\psi\\\chi\\\phi\\\omega}

\makecommands{#1mr}{\mathrm{#1}}{\upperalphabet}
\makecommands{#1cal}{\mathcal{#1}}{\upperalphabet}
\makecommands{#1#1}{\mathbb{#1}}{\upperalphabet}
\makecommands{#1bar}{\overline{#1}}{\upperalphabet}
\makecommands{#1twee}{\widetilde{#1}}{\upperalphabet\loweralphabet}
\makecommands{sf#1}{{\sf #1}}{\upperalphabet\loweralphabet}
\makeatletter
\makecommands{\expandafter\@gobble\string#1bar}{\overline{#1}}{\lowergreekalphabet}
\makecommands{\expandafter\@gobble\string#1twee}{\widetilde{#1}}{\lowergreekalphabet}
\makeatother

\newcommand{\FG}{\uFin^\mr{G}}

\newcommand{\uFin}{\ensuremath{\underline{\mathrm{Fin}}_{*}}}

\newcommand{\sma}{\wedge}

\definecolor{limegreen}{rgb}{0.2, 0.8, 0.2}
\definecolor{darkmagenta}{rgb}{0.55, 0.0, 0.55}
\definecolor{lavenderrose}{rgb}{0.91, 0.33, 0.5}
\definecolor{goldenpoppy}{rgb}{0.99, 0.76, 0.0}
\definecolor{seagreen}{rgb}{0.1, 0.4, 0.1}
\definecolor{maroon}{RGB}{128,0,0}
\definecolor{darkviolet}{RGB}{148,0,211}
\definecolor{darkbrown}{rgb}{.55,.3,.2}
\definecolor{snowblue}{rgb}{0.3, 0.4, 0.8}
\definecolor{DarkBlue}{rgb}{.05, 0.30, 0.5} 
\definecolor{DarkBrown}{rgb}{.5, 0.2, 0.2} 
\definecolor{orange}{rgb}{.8, .3, .1} 

\newcommand{\change}[1]{\textcolor{orange}{#1}}
%\textcolor[rgb]{.9, .4, .2}{#1}

%%% Better tikzcd equals signs.

\makeatletter

\def\tikzcdequalsignoffset{0.1em}

% This helper macro finds the start and endpoints of a line between the source and target nodes and stores them in \sourcecoordinate and \targetcoordinate.
% #1 -- source node
% #2 -- target node
\def\findedgesourcetarget#1#2{
    \let\sourcecoordinate\pgfutil@empty
    \ifx\tikzcd@startanchor\pgfutil@empty % Check that the source doesn't have a specified anchor
        \def\tempa{\pgfpointanchor{#1}{center}}% if so, start by taking the center of that coordinate
    \else
        \edef\tempa{\noexpand\pgfpointanchor{#1}{\expandafter\@gobble\tikzcd@startanchor}} % If it has an anchor, use that
        \let\sourcecoordinate\tempa
    \fi
    \ifx\tikzcd@endanchor\pgfutil@empty % check that the target doesn't have a specified anchor
        \def\tempb{\pgfpointshapeborder{#2}{\tempa}}% if so, our end point is the point on the boundary of node b that is in the direction of our initial start coordinate
    \else
        \edef\tempb{\noexpand\pgfpointanchor{#2}{\expandafter\@gobble\tikzcd@endanchor}}% If it has a specified anchor, use that
    \fi
    \let\targetcoordinate\tempb
    \ifx\sourcecoordinate\pgfutil@empty%
        \def\sourcecoordinate{\pgfpointshapeborder{#1}{\tempb}}%
    \fi
}

\tikzset{/tikz/commutative diagrams/equal/.style=equals,
    /tikz/commutative diagrams/equals/.style = {
    -,
    to path={\pgfextra{
        \findedgesourcetarget{\tikzcd@ar@start}{\tikzcd@ar@target} % find endpoints
        % Rotate coordinate system so that line goes in x direction
        \ifx\tikzcd@startanchor\pgfutil@empty
            \def\tikzcd@startanchor{.center}
        \fi
        \ifx\tikzcd@endanchor\pgfutil@empty
            \def\tikzcd@endanchor{.center}
        \fi
        \pgfmathanglebetweenpoints{\pgfpointanchor{\tikzcd@ar@start}{\expandafter\@gobble\tikzcd@startanchor}}{\pgfpointanchor{\tikzcd@ar@target}{\expandafter\@gobble\tikzcd@endanchor}}
        \pgftransformrotate{\pgfmathresult}
        % Draw the two lines
        \pgfpathmoveto{\pgfpointadd{\sourcecoordinate}{\pgfpoint{0}{\tikzcdequalsignoffset}}}
        \pgfpathlineto{\pgfpointadd{\targetcoordinate}{\pgfpoint{0}{\tikzcdequalsignoffset}}}
        \pgfpathmoveto{\pgfpointadd{\sourcecoordinate}{\pgfpoint{0}{-\tikzcdequalsignoffset}}}
        \pgfpathlineto{\pgfpointadd{\targetcoordinate}{\pgfpoint{0}{-\tikzcdequalsignoffset}}}
        \pgfusepath{draw}
}}}}

\makeatother

\title{Equivariant orientation of vector bundles over disconnected base spaces \ \ }
\author{Prasit~Bhattacharya}\address{New Mexico State University}\email{prasit@nmsu.edu}
\author{Foling Zou}\address{ Chinese Academy of Sciences}\email{zoufoling@amss.ac.cn}

\thanks{}

\setcounter{tocdepth}{1}

\begin{document}

\begin{abstract} In this paper, we view the equivariant orientation theory of equiva	riant vector bundles from the lenses of equivariant Picard spectra. This viewpoint allows us to identify, for a finite group $\mathrm{G}$, a precise condition under which an $\mathrm{R}$-orientation of a $\mathrm{G}$-equivariant vector bundle is encoded by a Thom class. Consequently, we are able to construct a generalization of the first Stiefel--Whitney class of a ``homogeneous" $\mathrm{G}$-equivariant bundle with respect to an $\mathbb{E}_\infty^{\mathrm{G}}$-ring spectrum $\mathrm{R}$. As an application, we show that the $2$-fold direct sum of any homogeneous bundle is $\mathrm{H}\underline{\mathcal{A}}_{\mathrm{G}}$-orientable, where $\underline{\mathcal{A}}_{\mathrm{G}}$ is the Burnside Mackey functor. We notice that  $\mathrm{H}\underline{\mathcal{A}}_{\mathrm{G}}$-orientability is equivalent to $\mathrm{H}\underline{\mathbb{Z}}$-orientability when the order of $\mathrm{G}$ is odd. When the order of $\mathrm{G}$ is even, we show that a $\mathrm{G}$-equivariant analog of the tautological line bundle over $\mathbb{RP}^\infty$ is $\mathrm{H}\underline{\mathbb{Z}}$-orientable but not $\mathrm{H}\underline{\mathcal{A}}_{\mathrm{G}}$-orientable.  
 \end{abstract}

\maketitle	
\tableofcontents
\begin{center}
MSC classes: 55R91, 55R40, 55R50, 19L20, 55S91, 55P91
\end{center}

\section{Introduction} \label{Sec:intro}
  In differential topology, an orientation of a vector bundle is an
  orientation of each fiber which is compatible with the topology of the base space. 
  In homotopy theory, an orientation is expressed as a Thom class in the ordinary cohomology with $\ZZ$-coefficients of its Thom space.  The concept of Thom class 
extends to extraordinary cohomology theories: 
For any ring spectrum $\mr{R}$,  an $\mr{R}$-Thom class of a vector bundle  $\upxi$ is a cohomology class
in the  $\mr{R}$-cohomology of its Thom space 
\[ {\sf u}_{\upxi} \in  [\mr{Th}(\upxi),  \Sigma^{\dim \upxi} \mr{R} ] = \mr{R}^{\dim \upxi}(\mr{Th}(\upxi)) \]
  such that its restriction to the Thom space  over any point is a unit in  the homotopy groups of $\mr{R}$.  This definition led to many interesting ideas and results, which are now foundational in homotopy theory and its application to geometry. 
 
 Equivariant orientation theory was systematically developed in the  eighties and nineties, geometrically in \cite{MayOrient, CMW},
 and cohomologically in \cite{LMMS, CW, Alaska, MayOrient}. Unlike the nonequivariant orientation theory, the geometric theory fails to coincide with the cohomological theory (see \cite[pg 4--5]{CMW}), although a comparison  appears in \cite{MayOrient}.
 One of the challenges of these developments was that many important results,
 such as the equivariant Poincare duality \cite[XVI $\mathsection$9]{Alaska}  were best understood for $\mr{G}$-connected spaces. This is because the equivariant dimension of a vector bundle is not well defined unless the base space is $\mr{G}$-connected. Two different ideas came in as a remedy: Costenoble and Waner \cite{CW} used representations of the fundamental groupoid of the base space, instead of the representation ring of $\mr{G}$, to index  cohomology theories, whereas May \cite{MayOrient} dropped the idea of a single Thom class and advocated  a ``family of Thom classes" instead.
  The difficulty in glueing such a ``family of Thom classes''   
 lies in the fact that, unlike  nonequivariant spaces, a $\mr{G}$-space is not equivalent to the disjoint union of its  $\mr{G}$-connected components (see \Cref{ex:nocoprod}). 
\begin{rmk}[$\mr{G}$-connected components] 
The seminal work of Elmendorf \cite{Esystems}  was extended in \cite{Piacenza}  to show  that $\Top^{\mr{G}}$,  the category   of $\mr{G}$-spaces with $\mr{G}$-equivariant continuous maps,  is Quillen equivalent to the category of
 $\mathcal{O}_{\mr{G}}$-spaces. Given a $\mr{G}$-space $\mr{X}$, we get an $\mathcal{O}_{\mr{G}}$-space
\[ 
\begin{tikzcd}
\Phi(\mr{X}): \cal{O}_{\mr{G}}^{\mr{op}} \rar & \Top,
\end{tikzcd}
\]
that maps $\mr{G}/\mr{H}$ to the $\mr{H}$-fixed points of $\mr{X}$ for any subgroup $\mr{H}$. 
For any $x \in \mr{X}^{\mr{G}}$, by choosing the path component of $x$
in $\mr{X}^{\mr{H}}$ for each $\mr{H} \subset \mr{G}$, one can form  an $\cal{O}_{\mr{G}}$-space. The corresponding $\mr{G}$-spaces  obtained using \cite[Theorem 1]{Esystems} is then a $\mr{G}$-connected component of $\mr{X}$. 
\end{rmk} 
\begin{defn}
  We  say a $\mr{G}$-space $\mr{X}$ is \emph{$\mr{G}$-connected
     when the space of  fixed points $\mr{X}^{\mr{H}}$ has
  exactly one path component for all $\mr{H} \subset \mr{G}$. Otherwise, we say
  $\mr{X}$ is \emph{$\mr{G}$-disconnected} (which includes the case when
    $\mr{X}^{\mr{G}}$ is empty).}
\end{defn}
\begin{ex} \label{ex:nocoprod} Let $\mr{S}^{\upsigma}$ denote the one-point compactification of $\upsigma$, the sign representation of $\mr{C}_2$. Note that the $\mr{C}_2$-fixed point of $\mr{S}^{\upsigma}$ is $\mr{S}^0$ which has two components. Therefore, 
$\mr{S}^{\upsigma}$ has two different $\mr{C}_2$-connected components both equivalent to $\Sigma (\mr{EC}_2)_+$, where $\mr{EC}_2$ is the total space of the universal  $\mr{C}_2$-principal bundle. The space $\mr{S}^{\upsigma}$ is not equivalent to the disjoint union of its $\mr{G}$-connected components as the underlying space of $\mr{S}^{\upsigma}$ is connected. 
 \end{ex}

In this paper, we propose a different approach to equivariant orientation theory,  where  the existence of an  orientation of an equivariant bundle is determined through a two-step obstruction theory.  As a consequence, we are able to make explicit calculations to check whether a given equivariant bundle admits an orientation or not, even when its base space is equivariantly disconnected.

 Taking advantage of the classification theory of equivariant vector bundles \cite{TtDClassify} (also see \cite{MayRmkClassify,FolingClassify}), equivariant $\mr{J}$-homomorphism (see \Cref{rmk:genuineJ}), and  the recent construction of  equivariant Picard spectra \cite{HHKWZ},  we introduce the notion of \emph{homogeneity} of  $\mr{G}$-equivariant vector bundles relative to any $\mathbb{E}_\infty^{\mr{G}}$-ring spectrum $\R$  (see \Cref{defn:relhomogeneous}). In our work,  $\R$-homogeneity is a requirement for an equivariant vector bundle  to be $\R$-orientable, hence should be thought of as the first obstruction. Our first result is an  equivariant generalization of the classical Thom isomorphism theorem where base spaces of  $\mr{G}$-equivariant vector bundles are allowed to be  $\mr{G}$-disconnected.
\begin{thm} \label{main1} Let $\mr{G}$ be a finite group and let $\R$ be an $\mathbb{E}_\infty^{\mr{G}}$-ring spectrum. Suppose that $\upxi$ is an $\mr{R}$-homogeneous $\mr{G}$-equivariant real vector bundle over a $\mr{G}$-space $\mr{B}$,   then the following statements are equivalent: 
\begin{enumerate}
\item  The bundle $\upxi$ is $\mr{R}$-orientable. 
\item There exists an $\mr{R}$-Thom class  \[ {\sf u}_{\upxi} \in [\mr{Th}(\upxi), \cal{I}  ]^{\mr{G}} \] for some invertible $\R$-module $\cal{I}$. 
\item There exists a $\mr{G}$-equivariant weak equivalence  
\begin{equation} \label{eqn:thomiso}
 \begin{tikzcd} 
  \mr{Th}(\upxi) \sma \mr{R} \simeq \mr{B}_+ \sma \cal{I}
\end{tikzcd} 
\end{equation}
for some invertible $\R$-module $\cal{I}$.
\end{enumerate}
\end{thm}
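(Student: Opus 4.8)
The plan is to establish the cyclic chain $(1) \Rightarrow (2) \Rightarrow (3) \Rightarrow (1)$, with an equivariant Thom isomorphism as the technical core. I would work in the symmetric monoidal $\infty$-category of genuine $\mr{G}$-spectra and organize everything around the parametrized Thom $\R$-module $\mr{Th}(\upxi) \wedge \R$, which the equivariant $\mr{J}$-homomorphism classifies by a map $\mr{B}_+ \to \pic(\R)$. The key leverage of the hypothesis is that $\R$-homogeneity, by \Cref{defn:relhomogeneous}, forces the fiberwise Thom $\R$-modules of $\upxi$ to all represent a single class $[\cal{I}]$ in the equivariant Picard group, so that this classifying map lands in the component of $[\cal{I}]$. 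This is precisely what lets a single invertible module $\cal{I}$ serve uniformly across the possibly $\mr{G}$-disconnected base $\mr{B}$, where, by \Cref{ex:nocoprod}, one cannot reduce to $\mr{G}$-connected pieces.

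For $(1) \Rightarrow (2)$ I would unwind the Picard-spectral definition of orientation: a trivialization of the classifying map $\mr{B}_+ \to \pic(\R)$ at the constant value $\cal{I}$ is exactly the datum of a global Thom class $u_\upxi \colon \mr{Th}(\upxi) \to \cal{I}$ restricting to a unit in $\pi_\star \R$ along each fiber, so this step is essentially definitional once homogeneity has pinned down $\cal{I}$. For $(2) \Rightarrow (3)$ I would extend $u_\upxi$ to an $\R$-module map $\bar u_\upxi \colon \mr{Th}(\upxi) \wedge \R \to \cal{I}$ and form the Thom-diagonal composite
\[
\Phi \colon \mr{Th}(\upxi) \wedge \R \xrightarrow{\;\Delta \wedge \id\;} \mr{Th}(\upxi) \wedge \R \wedge \mr{B}_+ \xrightarrow{\;\bar u_\upxi \wedge \id\;} \mr{B}_+ \wedge \cal{I},
\]
where $\Delta \colon \mr{Th}(\upxi) \to \mr{Th}(\upxi) \wedge \mr{B}_+$ is the Thom diagonal. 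The map $\Phi$ is parametrized over $\mr{B}$ by construction, and I would prove it realizes \eqref{eqn:thomiso} by filtering $\mr{B}$ by its $\mr{G}$-CW skeleta: the associated cofiber sequences and a five-lemma comparison reduce the claim to the single-orbit case $\mr{B} = \mr{G}/\mr{H}$, where the unit-on-fibers hypothesis makes $\Phi$ an equivalence after applying $(-)^{\mr{K}}$ for every $\mr{K} \subseteq \mr{H}$.

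For $(3) \Rightarrow (1)$ I would run this in reverse: precomposing the equivalence \eqref{eqn:thomiso} with the unit $\mr{Th}(\upxi) \to \mr{Th}(\upxi) \wedge \R$ and collapsing along $\mr{B}_+ \to \mr{S}^0$ yields a candidate class $\mr{Th}(\upxi) \to \cal{I}$; homogeneity places each of its fiber restrictions in the component $[\cal{I}]$, and I would argue that the existence of the global equivalence forces each such restriction to be a unit, recovering a Thom class and hence, via $\Phi$, the orientation.

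The main obstacle I anticipate is exactly this converse passage $(3) \Rightarrow (1)$: upgrading a \emph{bare} $\mr{G}$-equivalence of spectra into a \emph{fiberwise} unit Thom class. Nonequivariantly one would split $\mr{B}$ into connected components and read off the fiber behavior componentwise, but \Cref{ex:nocoprod} rules this out, so the argument must detect the fiberwise unit condition from the global equivalence simultaneously over all orbit types. I expect $\R$-homogeneity to do the essential work here, both by guaranteeing that the single target $\cal{I}$ is legitimate across all $\mr{G}$-cells (so that the cofiber-sequence and five-lemma comparisons in $\Phi$ are even well posed) and by constraining the fiber restrictions to the component $[\cal{I}]$, where being a nonzero $\R$-module map is equivalent to being a unit.
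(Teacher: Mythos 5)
Your proposal follows essentially the same route as the paper: the paper treats $(1)\Leftrightarrow(2)$ as a tautology of the Picard-theoretic definitions, proves $(2)\Rightarrow(3)$ by the Thom-diagonal composite you call $\Phi$, and proves $(3)\Rightarrow(2)$ by precomposing the equivalence with the unit of $\R$ and collapsing $\mr{B}$ to a point, exactly as in your final step. You supply more scaffolding than the paper does --- the $\mr{G}$-CW induction showing $\Phi$ is an equivalence, and the concern about extracting a \emph{fiberwise unit} from a bare equivalence in $(3)\Rightarrow(1)$, both of which the paper compresses into the words ``essentially a tautology'' and ``classical'' --- but the skeleton of the argument is identical.
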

\begin{rmk}[Finiteness assumption on $\mr{G}$] 
 We let $\mr{G}$ to be a finite group in \Cref{main1} and in the rest of the paper to avoid complications of the equivariant infinite loop space theory otherwise (see \cite[$\mathsection$9.2]{MMO}).  Further,  our main application \Cref{thm:gamma_rho_orientation}  makes sense only when $|\mr{G}|$ is finite.
\end{rmk}
 \begin{notn} The real regular representation of any finite group $\mr{G}$ will be denoted by  $\uprho_{\mr{G}}$. When the underlying group $\mr{G}$ is  clear from the context, we will drop the subscript.
\end{notn}

 The notion of $\R$-homogeneity of an equivariant bundle is an extension of the concept of homogeneity (see \Cref{defn:homogeneous}) which already existed in the literature.   Roughly speaking, a $\Gmr$-equivariant vector bundle is considered homogeneous if the fibers are isomorphic as $\Gmr$-representations (see \Cref{rmk:geometry} for a precise statement). This common isomorphism class of $\mr{G}$-representations is then considered to be the equivariant dimension of the bundle.   Costenoble and Waner \cite{CWordinary}  referred to  a $\mr{G}$-equivariant homogeneous bundle   as a ``$\mr{V}$-bundle" 
 where $\mr{V}$ is the $\mr{G}$-representation  encoding the equivariant
 dimension. Note that any $\mr{G}$-equivariant vector bundle over a $\mr{G}$-connected base space is
 homogeneous. However, there are  many  important  and interesting
examples of  homogeneous $\mr{G}$-equivariant vector bundles over
  disconnected base spaces.
 \begin{ex} Let $\mr{V}$ be a finite dimensional real $\mr{G}$-representation. Then the projective space  $\PP(\mr{V})$ (the space of $1$ dimensional subspaces of $\mr{V}$) is a smooth $\mr{G}$-manifold. When $\mr{V} = n \uprho_{\mr{G}}$,  we show (see \Cref{lem:P(V)})  that the tangent bundle of $\PP(\mr{V})$ is a homogeneous bundle of dimension  $n \uprho_{\mr{G}} -1$ (also see \Cref{rmk:TP(V)}). 
 \end{ex}
 \begin{ex} For a real $\Gmr$-representation $\mr{V}$, the tautological
     line bundle $\upgamma_1$ over $\PP(\mr{V})$ is not a homogeneous if
     $\mr{V}$ contains more than one isomorphism classes of one dimensional
     subrepresentations. This is because the $\Gmr$-connected components of
     $\PP(\Vmr)$ are in one-to-one correspondence with isomorphic classes of
     $1$-dimensional subrepresentations of $\mr{V}$, and
     the dimension of $\upgamma_1$ restricted to a component is
     precisely the indexing subrepresentation (as a consequence
     of \Cref{lem:fiber}). A similar conclusion can be made for complex projective space associated to a complex $\Gmr$-representation (also see \Cref{rmk:complexorient}).
 \end{ex}
  \begin{rmk} In our theory, a homogeneous bundle is automatically
      $\Rmr$-homogeneous for all $\EE_\infty^\Gmr$-ring  $\mr{R}$ (compare
      \Cref{defn:homogeneous} and \Cref{defn:relhomogeneous}).
      However,  the converse may not be true in general (see \Cref{rmk:homVSrelhom} and \Cref{rmk:liftrelhom}). 
 \end{rmk}
  We will now describe some universal examples of homogeneous bundles. Let $\Pi$ be a compact Lie group, and let $\mr{E}_{\mr{G}}\Pi$ denote the total space of the universal principal  $\mr{G}$-$\Pi$ bundle (see \eqref{eq:universal} for details). 
 For a finite dimensional real $\Pi \times \mr{G}$  representation $\mr{V}$, let $\uplambda_{\mr{V}}$  be the 
   the  $\mr{G}$-equivariant bundle  whose projection map 
\begin{equation} \label{eqn:generalbundle}
\begin{tikzcd}
\uppi_{\uplambda_{\mr{V}}}: \mr{E}_{\mr{G}} \Pi  \times_{ \Pi} \mr{V} \rar & \mr{E}_{\mr{G}} \Pi  \times_{ \Pi} {\bf 0} := \mr{B}_{\mr{G}} \Pi 
\end{tikzcd}
\end{equation} 
is induced by the terminal map $\mr{V} \twoheadrightarrow {\bf 0}$. The  $\mr{G}$-equivariant bundles of the form $\uplambda_{\mr{V}}$ are important, particularly  when $\Pi$ is the orthogonal group $\mr{O}(n)$,  as the base space $\mr{B}_{\mr{G}} \mr{O}(n)$  classifies $\mr{G}$-equivariant vector bundles \cite{TtDClassify}. However,  one of the difficulties in studying these bundles lies in the fact that the base space of $\uplambda_{\mr{V}}$ is typically $\mr{G}$-disconnected (see \cite{LM} and \Cref{thm:fixBGPi}).
We show that: 
\begin{thm}  \label{thm:Vhomogeneous} Suppose $\mr{V}$ is a $\Pi \times \mr{G}$-representation isomorphic to \[ \mr{V} \cong  n \uprho_{\mr{G}} \otimes \mr{W},\]  where $\mr{W}$ is an arbitrary finite real representation of $\Pi$, and $n \in \NN$. 
Then $\uplambda_{\mr{V}}$ as defined in \Cref{eqn:generalbundle}  is  a homogeneous $\mr{G}$-equivariant vector bundle.  
\end{thm}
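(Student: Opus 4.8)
The plan is to verify the homogeneity condition of \Cref{defn:homogeneous} directly, by computing the fiber representations of $\uplambda_{\mr{V}}$ over the fixed points of $\mr{B}_{\mr{G}}\Pi$ and exhibiting a single $\mr{G}$-representation $\mr{V}_0$ whose restrictions recover all of them. First I would recall, via the Lashof--May description of the fixed spaces (\Cref{thm:fixBGPi}, cf.\ \cite{LM}), that for each subgroup $\mr{H} \le \mr{G}$ the fixed space $(\mr{B}_{\mr{G}}\Pi)^{\mr{H}}$ decomposes as a disjoint union indexed by conjugacy classes of homomorphisms $\rho \colon \mr{H} \to \Pi$, and that a point of the component indexed by $\rho$ is represented by some $e \in \mr{E}_{\mr{G}}\Pi$ satisfying $h \cdot e = e \cdot \rho(h)$ for all $h \in \mr{H}$.

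Next I would compute the $\mr{H}$-representation carried by the fiber of $\uplambda_{\mr{V}}$ over such a point $[e]$. Using that the $\mr{G}$- and $\Pi$-actions on $\mr{E}_{\mr{G}}\Pi \times_{\Pi} \mr{V}$ commute, one checks that $h \cdot [e, v] = [e, (\rho(h), h)\cdot v]$, so the fiber is $\mr{V}$ pulled back along the homomorphism $(\rho, \mathrm{incl}) \colon \mr{H} \to \Pi \times \mr{G}$, $h \mapsto (\rho(h), h)$. Specializing to $\mr{V} \cong n\uprho_{\mr{G}} \otimes \mr{W}$ and using $\Res^{\mr{G}}_{\mr{H}} \uprho_{\mr{G}} \cong [\mr{G}:\mr{H}]\,\uprho_{\mr{H}}$, this fiber becomes $n[\mr{G}:\mr{H}]\,\uprho_{\mr{H}} \otimes \rho^{*}\mr{W}$ as an $\mr{H}$-representation, where $\rho^{*}\mr{W}$ denotes $\mr{W}$ viewed as an $\mr{H}$-representation through $\rho$.

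The decisive step is the absorption property of the regular representation: for any finite-dimensional $\mr{H}$-representation $\mr{U}$ there is an isomorphism $\uprho_{\mr{H}} \otimes \mr{U} \cong (\dim \mr{U})\,\uprho_{\mr{H}}$, realized by $g \otimes u \mapsto g \otimes g^{-1}u$. Applying this with $\mr{U} = \rho^{*}\mr{W}$ collapses the fiber to $n(\dim\mr{W})[\mr{G}:\mr{H}]\,\uprho_{\mr{H}}$, which crucially depends only on $\mr{H}$ and no longer on $\rho$. Setting $\mr{V}_0 := n(\dim\mr{W})\,\uprho_{\mr{G}}$, the same restriction formula gives $\Res^{\mr{G}}_{\mr{H}}\mr{V}_0 \cong n(\dim\mr{W})[\mr{G}:\mr{H}]\,\uprho_{\mr{H}}$, so the fiber over every fixed point is isomorphic to $\Res^{\mr{G}}_{\mr{H}}\mr{V}_0$, which is exactly what homogeneity with respect to $\mr{V}_0$ requires.

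The part demanding the most care is not this fiberwise identification but its coherence: homogeneity asks that these isomorphisms be compatible across the whole fixed space $(\mr{B}_{\mr{G}}\Pi)^{\mr{H}}$ and under further restriction to smaller subgroups, rather than merely pointwise. I expect this to follow because the absorption isomorphism is natural in $\mr{U}$ and because the $\rho$-dependence has been eliminated, so that the local trivializations glue over each $\mr{G}$-connected component; pinning down the precise compatibility, and checking that it matches the naturality built into \Cref{defn:homogeneous}, is where the real work lies. Note that dropping the $\uprho_{\mr{G}}$ factor would leave the fiber genuinely dependent on $\rho$, so the absorption phenomenon is precisely what the hypothesis $\mr{V} \cong n\uprho_{\mr{G}}\otimes\mr{W}$ is designed to supply.
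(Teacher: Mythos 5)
Your argument is correct and follows essentially the same route as the paper: identify the fiber of $\uplambda_{\mr{V}}$ over a point of $(\mr{B}_{\mr{G}}\Pi)^{\mr{H}}$ as $\uptheta^{*}\mr{W}\otimes\Res^{\mr{G}}_{\mr{H}}(n\uprho_{\mr{G}})$ via the Lashof--May analysis of the fixed spaces, then invoke the absorption property of the regular representation to eliminate the dependence on $\uptheta$. The coherence issue you flag at the end is not actually an obstacle: homogeneity in \Cref{defn:homogeneous} is a condition purely on $\ull{\pi}_0$, so the pointwise identification of the fiber class in $\RO(\mr{H})$ with $[\Res^{\mr{G}}_{\mr{H}}\mr{V}_0]$, together with the evident compatibility of these classes under restriction, already yields the required factorization through ${\sf c}_{\All_{\mr{G}}}$.
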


%\begin{rmk} \label{rmk:classify} When $\Pi =\mr{O}(n)$, the space $\mr{B}_{\mr{G}}\mr{O}(n)$ is a classifying space for $\mr{G}$-equivariant vector bundles \cite{TtDClassify}.\end{rmk}
\begin{rmk} \label{rmk:C2Steenrod} When $\mr{G} = \mr{C}_2$, $\Pi = \Sigma_2$ and $\mr{V} = \uprho \otimes \uptau$, the tensor product of the regular representation of $\mr{C}_2$ and the sign representation of $\Sigma_2$, then $\uplambda_{\mr{V}}$ is a homogeneous $\mr{C}_2$-equivariant bundle by \Cref{thm:Vhomogeneous}. This fact is essential in the  geometric construction of the $\mr{C}_2$-equivariant Steenrod operations (see \cite[$\mathsection$3.2]{BGL2}). 
 \end{rmk}
\begin{rmk} \label{rmk:C2Steenrod} When $\mr{G} = \mr{C}_2$, $\Pi = \Sigma_2$ and $\mr{V} = \mr{U} \otimes \uptau$, where the $\mr{C}_2$-representation $\mr{U}$ is not in the form of $n\uprho$, it follows from 
\Cref{lem:fiber} that $\uplambda_{\mr{V}}$ is not homogeneous. 
 \end{rmk}

The second and final obstruction to $\R$-orientability is an equivariant analog of the first Stiefel--Whitney class. Classically,  the first Stiefel--Whitney class is  the obstruction to $\mr{H}\mathbb{Z}$-orientability of vector bundles. In \Cref{defn:SW}, we  define for 
an $\R$-homogeneous bundle  $\upxi$,   a cohomology class 
${\sf w}_1^{\R}(\upxi)$, which can be thought of as 
 the first $\mr{G}$-equivariant Stiefel--Whitney class  of $\upxi$ with respect to
  $\R$. 

\begin{thm} \label{thm:w1orient} An $\mr{R}$-homogeneous $\mr{G}$-equivariant bundle $\upxi$ is $\R$-orientable if and only if ${\sf w}_1^{\R}(\upxi) =0$. 
\end{thm}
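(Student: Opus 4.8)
The plan is to deduce \Cref{thm:w1orient} from the Thom-isomorphism criterion of \Cref{main1} by reinterpreting $\R$-orientability as the triviality of a single twisting class, which is exactly $\mathsf{w}_1^{\R}(\upxi)$. First I would package the relevant data homotopically. Via the equivariant $\mr{J}$-homomorphism (\Cref{rmk:genuineJ}) and the equivariant Picard spectrum of \cite{HHKWZ}, the $\R$-linearized Thom spectrum $\mr{Th}(\upxi)\sma\R$, viewed as a bundle of invertible $\R$-modules over $\mr{B}$, is classified by a $\mr{G}$-map $f\colon \mr{B}\to \underline{\pic}(\R)$ into the equivariant Picard space. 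The $\R$-homogeneity hypothesis (\Cref{defn:relhomogeneous}) is precisely the statement that $f$ has locally constant component, i.e.\ that $\mpi_0 f$ is constant equal to the class of the invertible module $\cal{I}$ recording the equivariant dimension of $\upxi$; this is the content of the ``first obstruction'' being resolved. Translating by $\cal{I}^{-1}$, the map $f$ then lands in the basepoint component, yielding a $\mr{G}$-map $g\colon \mr{B}\to \B\uGL(\R)$, where $\B\uGL(\R)$ is the identity component of $\underline{\pic}(\R)$, the classifying space of the equivariant units.

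Next I would identify $\mathsf{w}_1^{\R}(\upxi)$ with the homotopy class $[g]$. By construction (\Cref{defn:SW}), $\mathsf{w}_1^{\R}(\upxi)$ is the monodromy class of the bundle of invertible $\R$-modules $\mr{Th}(\upxi)\sma\R$ once its Picard degree has been trivialized by homogeneity; concretely it is the image of $\upxi$ under $\mr{B}\to\mr{B}_{\mr{G}}\mr{O}\xrightarrow{\,\mr{J}\,}\B\uGL(\S_{\mr{G}})\to\B\uGL(\R)$, recorded as a single class in the degree-one group $[\mr{B}_+,\B\uGL(\R)]^{\mr{G}}$. Since $[g]$ is by definition this class, its vanishing is equivalent to $g$ being equivariantly null-homotopic. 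Because the coefficients are taken in the equivariant units, which form a spectrum, this single group detects the whole homotopy class of $g$ rather than only a primary obstruction, so no higher obstruction groups intervene. When $\R$ is an equivariant Eilenberg--MacLane spectrum such as $\H\underline{\ZZ}$, the object $\uGL(\R)$ is an equivariant Eilenberg--MacLane object and this group reduces to the Bredon group $\H^1_{\mr{G}}(\mr{B};\underline{(\pi_0\R)^{\times}})$, recovering the classical first Stiefel--Whitney class and justifying the name.

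Finally I would run the equivalence through \Cref{main1}. If $\upxi$ is $\R$-orientable, then \Cref{main1}(3) supplies the equivalence \eqref{eqn:thomiso} for some invertible $\cal{I}$; unwinding the classification forces $g$ to be null and hence $\mathsf{w}_1^{\R}(\upxi)=0$. Conversely, if $\mathsf{w}_1^{\R}(\upxi)=0$ then $g$ is null, so the bundle of invertible modules $\mr{Th}(\upxi)\sma\R$ is equivalent to the constant bundle with fiber $\cal{I}$, i.e.\ \eqref{eqn:thomiso} holds, and \Cref{main1} returns $\R$-orientability. The main obstacle is the passage between the vanishing of the cohomology class and the geometric equivalence \eqref{eqn:thomiso}: one must verify that assigning to $\mr{Th}(\upxi)\sma\R$ its classifying map $g$ is a faithful dictionary, namely that $\mr{G}$-equivalence classes of bundles of invertible $\mr{B}_+\sma\R$-modules are computed by $[\mr{B}_+,\underline{\pic}(\R)]^{\mr{G}}$, and that homogeneity genuinely confines $g$ to the single component of $\cal{I}$, so that $\mathsf{w}_1^{\R}$ is well-defined independently of the auxiliary trivialization $\cal{I}$. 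This descent statement for the equivariant Picard space, together with the naturality of the equivariant $\mr{J}$-homomorphism, is where the machinery of \cite{HHKWZ} does the real work; the remaining implications are then formal.
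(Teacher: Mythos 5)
Your proposal is correct and, once the detour through \Cref{main1} is stripped away, it is essentially the paper's own argument: since an $\R$-orientation is \emph{defined} (\Cref{defn:orientation}) as a null-homotopy of the classifying map $\mr{F}_{\upxi,\R}$, and ${\sf w}_1^{\R}(\upxi)$ is by construction the homotopy class of the lift of ${\sf f}_{\upxi,\R}-{\sf c}_{\cal{I}}$ to $\bgl(\R)$, the equivalence is a tautology, with the only point needing verification being the well-definedness of that lift, which the paper disposes of in \Cref{rmk:uniquew1} by noting the indeterminacy lives in $\mr{H}^{-1}(\mr{B};\ull{\pi}_0\pic(\R))=0$. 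The passage through the Thom isomorphism of \Cref{main1} is harmless but unnecessary, as orientability is already phrased as null-homotopy of the classifying map rather than via the equivalence \eqref{eqn:thomiso}.
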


Similar to the classical  first Stiefel--Whitney class, our generalization also satisfies the additivity formula
\begin{equation} \label{eqn:additivity}
 {\sf w}_1^\R(\upxi_1 \oplus \upxi_2) = {\sf w}_1^\R(\upxi_1) + {\sf w}_1^\R(\upxi_2)  
 \end{equation}
 for a pair of $\R$-homogeneous vector bundles (as usual). 
Using the additivity property, one may  define the \emph{$\R$-orientation order} of an $\mr{R}$-homogeneous vector bundle as the smallest number $n$ for which the $n$-fold direct sum of $\upxi$ is $\R$-orientable.
The additivity is also fundamental in the proof of \Cref{thm:HZtwofold} and \Cref{thm:HAtwofold}. 
 \begin{notn}  Throughout this paper
 \begin{itemize}
 \item $\ull{\mr{A}}$ denotes the constant $\mr{G}$-Tambara functor at the abelian group $\mr{A}$, 
 \item $\ull{\mathcal{A}}_{\mr{G}}$ denotes the Burnside Tambara functor of $\mr{G}$, and 
 \item  $\mr{H}\cal{M}$ denotes  the  Eilenberg--MacLane spectrum corresponding to a Mackey functor $\cal{M}$.
 \end{itemize}
 \end{notn}
 \begin{ex}
 A Tambara functor $\mathcal{T}$ is a  $\mr{G}$-commutative monoid object in the category of Mackey functors, and therefore, 
 the corresponding Eilenberg--MacLane spectrum  $\mr{H} \cal{T}$ is an $\Einfty^{\mr{G}}$-ring spectrum. The first Stiefel--Whitney class of an $\mr{H} \cal{T}$-homogeneous  bundle $\upxi$ over $\mr{B}$ is a class in 
 \[ {\sf w}_1^{\cal{T}}(\upxi):= {\sf w}_1^{\mr{H}\cal{T}}(\upxi)  \in \mr{H}^1(\mr{B}; \cal{T}^{\times}),\] where $\cal{T}^{\times}$ is the unit Mackey functor for $\cal{T}$.
 \end{ex}
 In $\mr{G}$-equivariant homotopy theory, $\mr{H}\underline{\mathbb{Z}}$ is often regarded as a generalization of the integral Eilenberg--MacLane spectrum. In this paper we show that: 
 \begin{thm} \label{thm:HZtwofold} For any $\mr{H}\ull{\mathbb{Z}}$-homogeneous $\mr{G}$-equivariant vector bundle, its $2$-fold direct sum is $\mr{H}\ull{\mathbb{Z}}$-orientable. 
 \end{thm}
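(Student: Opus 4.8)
The plan is to reduce the statement to an elementary $2$-torsion observation by combining the additivity formula \eqref{eqn:additivity} with the orientability criterion of \Cref{thm:w1orient}. The class of $\mr{H}\ull{\mathbb{Z}}$-homogeneous bundles is closed under direct sum---this is already implicit in \eqref{eqn:additivity}, since ${\sf w}_1^{\mr{H}\ull{\mathbb{Z}}}(\upxi_1 \oplus \upxi_2)$ must be defined for the additivity formula to be stated---so for an $\mr{H}\ull{\mathbb{Z}}$-homogeneous bundle $\upxi$ its $2$-fold sum $\upxi \oplus \upxi$ is again $\mr{H}\ull{\mathbb{Z}}$-homogeneous. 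By \Cref{thm:w1orient} it then suffices to prove ${\sf w}_1^{\mr{H}\ull{\mathbb{Z}}}(\upxi \oplus \upxi) = 0$, and taking $\upxi_1 = \upxi_2 = \upxi$ in \eqref{eqn:additivity} gives
\[ {\sf w}_1^{\mr{H}\ull{\mathbb{Z}}}(\upxi \oplus \upxi) = 2\,{\sf w}_1^{\mr{H}\ull{\mathbb{Z}}}(\upxi), \]
so the whole problem reduces to showing that twice the first Stiefel--Whitney class vanishes.

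To finish I would identify the coefficients. As $\ull{\mathbb{Z}}$ is the constant Tambara functor at $\mathbb{Z}$, the class ${\sf w}_1^{\mr{H}\ull{\mathbb{Z}}}(\upxi)$ lives in the Bredon cohomology group $\mr{H}^1(\mr{B}; \ull{\mathbb{Z}}^{\times})$, where $\ull{\mathbb{Z}}^{\times}$ is the unit Mackey functor of $\ull{\mathbb{Z}}$. At each orbit $\mr{G}/\mr{H}$ the ring $\ull{\mathbb{Z}}(\mr{G}/\mr{H})$ is $\mathbb{Z}$, whose unit group is $\{\pm 1\} \cong \mathbb{Z}/2$, so $\ull{\mathbb{Z}}^{\times}$ takes $2$-torsion values at every level. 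Since the Bredon cochain complex computing $\mr{H}^{\ast}(\mr{B}; \ull{\mathbb{Z}}^{\times})$ is assembled levelwise from these $2$-torsion groups, multiplication by $2$ annihilates every cochain and hence every cohomology class; in particular $\mr{H}^1(\mr{B}; \ull{\mathbb{Z}}^{\times})$ is $2$-torsion. Therefore $2\,{\sf w}_1^{\mr{H}\ull{\mathbb{Z}}}(\upxi) = 0$, and $\upxi \oplus \upxi$ is $\mr{H}\ull{\mathbb{Z}}$-orientable by \Cref{thm:w1orient}.

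The step demanding the most care is the identification of the coefficient Mackey functor $\ull{\mathbb{Z}}^{\times}$ and the check that it is genuinely $2$-torsion: one must confirm that the restriction and norm (multiplicative transfer) maps comprising its Mackey structure are compatible with the levelwise identification of units with $\{\pm 1\}$. Since $2$-torsion is a property detected levelwise and automatically preserved by any additive coefficient structure, once the levelwise values are pinned down the conclusion is immediate. The only other input---needed even to make ${\sf w}_1^{\mr{H}\ull{\mathbb{Z}}}(\upxi \oplus \upxi)$ meaningful---is the closure of $\mr{H}\ull{\mathbb{Z}}$-homogeneity under direct sums, which is presupposed by the additivity formula \eqref{eqn:additivity}.
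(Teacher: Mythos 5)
Your proof is correct and follows essentially the same route as the paper: apply the additivity formula \eqref{eqn:additivity} to get ${\sf w}_1^{\ull{\ZZ}}(\upxi\oplus\upxi)=2\,{\sf w}_1^{\ull{\ZZ}}(\upxi)$, observe that this class lives in $\mr{H}^1(\mr{B};\ull{\FF}_2)$ (the paper phrases this as $\bgl(\mr{H}\ull{\ZZ})\simeq\Sigma\mr{H}\ull{\FF}_2$, you phrase it as $\ull{\ZZ}^{\times}$ being levelwise $\{\pm 1\}$ and hence $2$-torsion), and conclude via \Cref{thm:w1orient}. Your extra remarks on closure of homogeneity under direct sums and on the levelwise nature of $2$-torsion are sound but not needed beyond what the paper already records.
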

 Although $\mr{H}\underline{\mathbb{Z}}$ is a perfectly valid generalization, many
 consider the Burnside Eilenberg--MacLane  spectrum
 $\mr{H}\ull{\cal{A}}_{\mr{G}}$ as a more appropriate one. This is because, just like $\mr{H}\mathbb{Z}$ in the classical case,   $\mr{H}\ull{\cal{A}}_{\mr{G}}$ is the zeroth Postnikov approximation of the sphere spectrum $\mathbb{S}_{\mr{G}}$. Further, $\ull{\cal{A}}_{\mr{G}}$ is the unit in the symmetric monoidal category of Mackey functors (see \cite{Tambara, KMazur}), which is an equivariant generalization of the fact that $\mathbb{Z}$  is the unit for the tensor product of abelian groups. 

Studying $\mr{H}\ull{\cal{A}}_{\mr{G}}$-orientation of equivariant bundles is an important problem as it is a step towards understanding the genuine stable equivalence class (see \Cref{defn:genuinestable}) of  $\mr{H}\underline{\mathbb{Z}}$-orientable vector bundles. 
This is because the unit map for $\mr{H}\underline{\mathbb{Z}}$ factors through that of $\mr{H}\ull{\cal{A}}_{\mr{G}}$
\[ 
\begin{tikzcd}
\mathbb{S}_{\mr{G}} \rar & \mr{H} \ull{\cal{A}}_{\mr{G}} \rar & \mr{H} \ull{\mathbb{Z}}.
\end{tikzcd}
\]
 Moreover, $\mr{H}\ull{\cal{A}}_{\mr{G}}$-orientation of an equivariant vector bundle has a precise geometric interpretation as explained in  \cite{MayOrient}, because of which many important results in the subject, such as equivariant Poincare duality \cite[$\mathsection$1.12.2]{CWordinary}, are discussed in terms of $\mr{H}\ull{\cal{A}}_{\mr{G}}$-cohomology. 
 We show: 
\begin{thm} \label{thm:HAtwofold} For any $\mr{H} \ull{\cal{A}}_{\mr{G}}$-homogeneous $\mr{G}$-equivariant vector bundle, its $2$-fold direct sum is  $\mr{H} \ull{\cal{A}}_{\mr{G}}$-orientable. 
\end{thm}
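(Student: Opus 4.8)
The plan is to reduce to the vanishing criterion of \Cref{thm:w1orient} through the additivity formula \eqref{eqn:additivity}, and then to exploit the arithmetic of the units of the Burnside ring. Let $\upxi$ be an $\mr{H}\ull{\cal{A}}_{\mr{G}}$-homogeneous bundle over a $\mr{G}$-space $\mr{B}$. Homogeneity is closed under Whitney sums (the Whitney sum of a $\mr{V}$-bundle with itself is a $2\mr{V}$-bundle), so $\upxi \oplus \upxi$ is again $\mr{H}\ull{\cal{A}}_{\mr{G}}$-homogeneous and its first Stiefel--Whitney class is defined. Applying \eqref{eqn:additivity} with $\upxi_1 = \upxi_2 = \upxi$ yields
\[ {\sf w}_1^{\mr{H}\ull{\cal{A}}_{\mr{G}}}(\upxi \oplus \upxi) \;=\; 2\,{\sf w}_1^{\mr{H}\ull{\cal{A}}_{\mr{G}}}(\upxi) \;\in\; \mr{H}^1\bigl(\mr{B}; (\ull{\cal{A}}_{\mr{G}})^{\times}\bigr), \]
so by \Cref{thm:w1orient} it suffices to prove that this class vanishes. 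I will in fact show that the entire coefficient group $\mr{H}^1(\mr{B}; (\ull{\cal{A}}_{\mr{G}})^{\times})$ is annihilated by $2$.

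The essential input is the classical structure of Burnside-ring units. For each subgroup $\mr{H} \le \mr{G}$, the table-of-marks (fixed-point) homomorphism embeds the Burnside ring $\mr{A}(\mr{H})$ into $\prod_{(\mr{K})} \mathbb{Z}$, the product running over conjugacy classes of subgroups $\mr{K} \le \mr{H}$. A unit of $\mr{A}(\mr{H})$ must map to a unit in each factor, hence into $\{\pm 1\}$, so $\mr{A}(\mr{H})^{\times}$ has exponent $2$. Since the value of $(\ull{\cal{A}}_{\mr{G}})^{\times}$ at $\mr{G}/\mr{H}$ is exactly $\mr{A}(\mr{H})^{\times}$, the unit Mackey functor is a Mackey functor of exponent $2$: multiplication by $2$ is the zero endomorphism of $(\ull{\cal{A}}_{\mr{G}})^{\times}$.

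Bredon cohomology $\mr{H}^{*}(\mr{B}; -)$ is an additive functor of its Mackey-functor coefficients, so the self-map of $\mr{H}^1(\mr{B}; (\ull{\cal{A}}_{\mr{G}})^{\times})$ induced by multiplication by $2$ on coefficients coincides with multiplication by $2$ on the group. As it is induced by the zero map, it is zero, whence $\mr{H}^1(\mr{B}; (\ull{\cal{A}}_{\mr{G}})^{\times})$ has exponent $2$. In particular $2\,{\sf w}_1^{\mr{H}\ull{\cal{A}}_{\mr{G}}}(\upxi) = 0$, and \Cref{thm:w1orient} then forces $\upxi \oplus \upxi$ to be $\mr{H}\ull{\cal{A}}_{\mr{G}}$-orientable.

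I expect the work to be bookkeeping rather than conceptual. The two points requiring care are that homogeneity genuinely passes to $\upxi \oplus \upxi$ (so that \eqref{eqn:additivity} applies and the left-hand class is defined) and that the first Stiefel--Whitney class for $\mr{H}\ull{\cal{A}}_{\mr{G}}$ lands in $\mr{H}^1(\mr{B}; (\ull{\cal{A}}_{\mr{G}})^{\times})$, as recorded in the Example above. Once these identifications are fixed, the uniform exponent-$2$ behaviour of Burnside-ring units across all subgroups closes the argument; unlike \Cref{thm:HZtwofold}, no finer analysis of the coefficient Mackey functor is needed, since the $2$-torsion phenomenon here is insensitive to $\mr{G}$.
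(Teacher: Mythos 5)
Your proposal is correct and follows essentially the same route as the paper: additivity \eqref{eqn:additivity} gives ${\sf w}_1^{\ull{\cal{A}}_{\mr{G}}}(\upxi\oplus\upxi)=2\,{\sf w}_1^{\ull{\cal{A}}_{\mr{G}}}(\upxi)$, this vanishes because $\ull{\cal{A}}_{\mr{G}}^{\times}$ is an exponent-$2$ Mackey functor so the coefficient cohomology is an $\FF_2$-vector space, and \Cref{thm:w1orient} concludes. The only cosmetic difference is that you derive the exponent-$2$ fact directly from the mark homomorphism, where the paper cites Matsuda's theorem.
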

When $|\mr{G}|$ is odd, the unit presheaves $\ull{\ZZ}^{\times}$ and $\ull{\cal{A}}_{\mr{G}}^{\times}$ are equal, and therefore, 
an $\mr{H}\underline{\mathbb{Z}}$-orientation is sufficient to obtain an $\mr{H}\ull{\cal{A}}_{\mr{G}}$-orientation:
\begin{thm} \label{thm:AorientGodd} Let  $\mr{G}$ be a finite group of odd order. Then \[ {\sf w}_1^{\ull{\cal{A}}_{\mr{G}}} (\upxi) = {\sf w}_1^{\ull{\mathbb{Z}}} (\upxi)\] for any $\mr{H}\ull{\cal{A}}_{\mr{G}}$-homogeneous $\mr{G}$-equivariant bundle $\upxi$. 
\end{thm}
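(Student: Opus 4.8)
The plan is to deduce the equality from naturality of the first Stiefel--Whitney class in the coefficient ring, combined with a computation of units of Burnside rings. Recall the canonical factorization of the unit map $\Sbb_{\mr{G}} \to \mr{H}\ull{\cal{A}}_{\mr{G}} \to \mr{H}\ull{\mathbb{Z}}$, and write $\varphi \colon \mr{H}\ull{\cal{A}}_{\mr{G}} \to \mr{H}\ull{\mathbb{Z}}$ for the second map; it is a map of $\Einfty^{\mr{G}}$-ring spectra induced by the augmentation of Tambara functors $\ull{\cal{A}}_{\mr{G}} \to \ull{\mathbb{Z}}$ that sends a virtual $\mr{H}$-set to its underlying cardinality. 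Since the construction of ${\sf w}_1^{\R}$ in \Cref{defn:SW} is natural in the $\Einfty^{\mr{G}}$-ring $\R$, pushing the homogeneity datum of $\upxi$ forward along $\varphi$ shows that $\upxi$ is also $\mr{H}\ull{\mathbb{Z}}$-homogeneous, so that ${\sf w}_1^{\ull{\mathbb{Z}}}(\upxi)$ is defined; moreover $\varphi$ induces a commuting square whose coefficient map is the morphism of unit Mackey functors $\varphi_{\ast} \colon \ull{\cal{A}}_{\mr{G}}^{\times} \to \ull{\mathbb{Z}}^{\times}$, under which ${\sf w}_1^{\ull{\cal{A}}_{\mr{G}}}(\upxi) \mapsto {\sf w}_1^{\ull{\mathbb{Z}}}(\upxi)$ in $\mr{H}^1(\mr{B}; \ull{\mathbb{Z}}^{\times})$. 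It therefore suffices to prove that $\varphi_{\ast}$ is an isomorphism of Mackey functors, since it then induces an isomorphism on $\mr{H}^1(\mr{B};-)$ under which the two classes are identified.

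Because a morphism of Mackey functors is an isomorphism exactly when it is a levelwise isomorphism, I would reduce to showing that for every subgroup $\mr{H} \leq \mr{G}$ the augmentation induces an isomorphism of unit groups $\mr{A}(\mr{H})^{\times} \to \mathbb{Z}^{\times} = \{\pm 1\}$, where $\mr{A}(\mr{H}) := \ull{\cal{A}}_{\mr{G}}(\mr{G}/\mr{H})$ is the Burnside ring of $\mr{H}$. The augmentation is split by the unital ring map $\mathbb{Z} \to \mr{A}(\mr{H})$, so it is automatically surjective on units; the whole content is therefore its injectivity, equivalently the statement that $\mr{A}(\mr{H})^{\times} = \{\pm 1\}$.

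This is the main obstacle, and it is exactly where the hypothesis that $\abs{\mr{G}}$ is odd enters, since then every subgroup $\mr{H}$ has odd order as well. I would invoke the classical computation of Burnside-ring units due to tom Dieck: the mark homomorphism embeds $\mr{A}(\mr{H}) \hookrightarrow \prod_{(\mr{K})} \mathbb{Z}$, indexed by conjugacy classes of subgroups $\mr{K}$, and a unit has every ghost coordinate in $\{\pm 1\}$; tom Dieck's congruences describing the image of the marks then constrain these signs. Concretely, the congruences force the sign at $\mr{K}$ to agree with the sign at $\mr{L}$ whenever $\mr{K} \trianglelefteq \mr{L}$ with $\mr{L}/\mr{K}$ cyclic of (necessarily odd) prime order $p$, because the two coordinates become congruent modulo $p$ and $1 \not\equiv -1 \pmod{p}$ for odd $p$. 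As an odd-order group is solvable (Feit--Thompson), every subgroup of $\mr{H}$ is linked to the trivial subgroup by a subnormal chain with cyclic quotients of odd prime order; hence a unit has constant ghost coordinates and equals $\pm 1$, giving $\mr{A}(\mr{H})^{\times} = \{\pm 1\}$ as required.

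Putting these together, $\varphi_{\ast}$ is a levelwise, hence global, isomorphism of Mackey functors, so the induced map $\mr{H}^1(\mr{B}; \ull{\cal{A}}_{\mr{G}}^{\times}) \to \mr{H}^1(\mr{B}; \ull{\mathbb{Z}}^{\times})$ is an isomorphism carrying ${\sf w}_1^{\ull{\cal{A}}_{\mr{G}}}(\upxi)$ to ${\sf w}_1^{\ull{\mathbb{Z}}}(\upxi)$, which is the asserted equality. I expect the genuinely nontrivial inputs to be the Burnside-ring unit computation and the verification that the ${\sf w}_1$-formalism is sufficiently natural in $\R$ to produce the commuting square; the Mackey-functor bookkeeping and the passage to $\mr{H}^1$ should be routine.
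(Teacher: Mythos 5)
Your proposal is correct and its overall strategy coincides with the paper's: both reduce the statement to showing that the augmentation $\ull{\cal{A}}_{\mr{G}}^{\times} \to \ull{\ZZ}^{\times}$ is an isomorphism of Mackey functors when $|\mr{G}|$ is odd, and then transport ${\sf w}_1$ along the induced identification of $\bgl$'s. The difference is in how the key levelwise computation $\mr{A}_{\mr{H}}^{\times} = \{\pm 1\}$ is obtained. The paper simply specializes Matsuda's structure theorem (\Cref{thm:Matsuda}): for odd $|\mr{G}|$ no subgroup has an index-$2$ subgroup, so $m=0$ at every level and the unit group is $\{\pm 1\}$ generated by $-1$. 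You instead reprove this case from scratch via the mark homomorphism, the congruences $\phi_{\mr{K}}(u)\equiv\phi_{\mr{L}}(u)\pmod p$ for $\mr{K}\trianglelefteq\mr{L}$ with $\mr{L}/\mr{K}$ cyclic of odd prime order, and solvability of odd-order groups to chain every subgroup down to the trivial one; this is a valid and self-contained argument (it is essentially tom Dieck's classical proof), though invoking Feit--Thompson is a much heavier input than the citation the paper uses, and you could equally well quote the odd-order unit theorem directly. Your version also spells out more explicitly than the paper does why the two Stiefel--Whitney classes literally correspond under the induced isomorphism on $\mr{H}^1$, which is a worthwhile addition rather than a deviation.
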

The above theorem does not hold when $\mr{G}$ has an even order because the kernel of the  map 
\[ 
\begin{tikzcd}
\ull{\cal{A}}_{\mr{G}}^{\times} \rar & \ull{\mathbb{Z}}^{\times}
\end{tikzcd}
\]
of $\mr{G}$-Mackey functors is nonzero.  We use this fact to define a set
$\mathfrak{G}_{1}(-)$ (see \Cref{defn:ghosts}),  and refer to its elements as
``ghosts\footnote{The term ghost is used  to indicate that the set $\mathfrak{G}_1(\upxi)$
  is not defined unless ${\sf w}_1^{\ull{\mathbb{Z}}}(\upxi)$ is dead  (i.e. ${\sf
    w}_1^{\ull{\mathbb{Z}}}(\upxi) =0$).} of the first Stiefel--Whitney class". These elements can be regarded as  obstructions to $\mr{H}\ull{\cal{A}}_{\mr{G}}$-orientability for 
 $\mr{H} \ull{\mathbb{Z}}$-orientable vector bundles:
\begin{thm}  \label{thm:ghost} An $\mr{H}\ull{\cal{A}}_{\mr{G}}$-homogeneous $\mr{H}\ull{\mathbb{Z}}$-orientable vector bundle $\upxi$ is $\mr{H}\ull{\cal{A}}_{\mr{G}}$-orientable if and only if $0 \in \mathfrak{G}_1(\upxi) $.
\end{thm}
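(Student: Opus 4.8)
The plan is to reduce everything to \Cref{thm:w1orient} together with the naturality of the first Stiefel--Whitney class along the ring map $\mr{H}\ull{\cal{A}}_{\mr{G}} \to \mr{H}\ull{\mathbb{Z}}$. Because $\upxi$ is $\mr{H}\ull{\cal{A}}_{\mr{G}}$-homogeneous, \Cref{thm:w1orient} applied to $\R = \mr{H}\ull{\cal{A}}_{\mr{G}}$ says that $\upxi$ is $\mr{H}\ull{\cal{A}}_{\mr{G}}$-orientable if and only if ${\sf w}_1^{\ull{\cal{A}}_{\mr{G}}}(\upxi) = 0$ in $\mr{H}^1(\mr{B}; \ull{\cal{A}}_{\mr{G}}^{\times})$. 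Hence it suffices to prove that $0 \in \mathfrak{G}_1(\upxi)$ if and only if this class vanishes.

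First I would establish the naturality identity $\phi_*\big({\sf w}_1^{\ull{\cal{A}}_{\mr{G}}}(\upxi)\big) = {\sf w}_1^{\ull{\mathbb{Z}}}(\upxi)$, where $\phi \colon \ull{\cal{A}}_{\mr{G}}^{\times} \to \ull{\mathbb{Z}}^{\times}$ is the map of unit Mackey functors induced by the factorization $\mathbb{S}_{\mr{G}} \to \mr{H}\ull{\cal{A}}_{\mr{G}} \to \mr{H}\ull{\mathbb{Z}}$ of $\Einfty^{\mr{G}}$-ring spectra. This follows from the functoriality of \Cref{defn:SW} in $\R$: the classifying map defining ${\sf w}_1^{\R}(\upxi)$ is assembled from the equivariant $\mr{J}$-homomorphism and the equivariant Picard spectrum of $\R$, both natural in $\R$. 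Since $\upxi$ is $\mr{H}\ull{\mathbb{Z}}$-orientable it is in particular $\mr{H}\ull{\mathbb{Z}}$-homogeneous, so ${\sf w}_1^{\ull{\mathbb{Z}}}(\upxi)$ is defined and vanishes by \Cref{thm:w1orient}; this vanishing is exactly what makes $\mathfrak{G}_1(\upxi)$ defined, as the footnote to the statement records.

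Next I would run the short exact sequence of Mackey functors
\[ 0 \longrightarrow \mr{K} \longrightarrow \ull{\cal{A}}_{\mr{G}}^{\times} \xrightarrow{\ \phi\ } \ull{\mathbb{Z}}^{\times} \longrightarrow 0, \qquad \mr{K} := \ker \phi, \]
through Bredon cohomology; here $\phi$ is surjective because the unit $-1 \in \ull{\mathbb{Z}}^{\times}$ lifts to a unit of the Burnside ring, and $\mr{K} \neq 0$ precisely when $|\mr{G}|$ is even. The associated long exact sequence contains
\[ \mr{H}^1(\mr{B}; \mr{K}) \xrightarrow{\ \iota_*\ } \mr{H}^1(\mr{B}; \ull{\cal{A}}_{\mr{G}}^{\times}) \xrightarrow{\ \phi_*\ } \mr{H}^1(\mr{B}; \ull{\mathbb{Z}}^{\times}). \]
By the preceding paragraph ${\sf w}_1^{\ull{\cal{A}}_{\mr{G}}}(\upxi) \in \ker \phi_* = \im \iota_*$, so the preimage $\mathfrak{G}_1(\upxi) = \iota_*^{-1}\big({\sf w}_1^{\ull{\cal{A}}_{\mr{G}}}(\upxi)\big)$ of \Cref{defn:ghosts} is a nonempty subset of $\mr{H}^1(\mr{B}; \mr{K})$. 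As $\iota_*$ is a homomorphism of abelian groups, $\iota_*(0) = 0$, and therefore $0 \in \mathfrak{G}_1(\upxi)$ if and only if ${\sf w}_1^{\ull{\cal{A}}_{\mr{G}}}(\upxi) = 0$. Together with the first paragraph this gives the theorem.

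I expect the single nonformal ingredient to be the naturality identity $\phi_*\big({\sf w}_1^{\ull{\cal{A}}_{\mr{G}}}(\upxi)\big) = {\sf w}_1^{\ull{\mathbb{Z}}}(\upxi)$: one has to check that \Cref{defn:SW} is compatible with the ring map at the level of maps into the relevant equivariant Picard spectra, and not only after passing to homotopy groups. This is the same input behind \Cref{thm:AorientGodd}, where $\mr{K} = 0$ forces ${\sf w}_1^{\ull{\cal{A}}_{\mr{G}}} = {\sf w}_1^{\ull{\mathbb{Z}}}$. The remaining point is purely bookkeeping, namely that \Cref{defn:ghosts} indeed yields the fiber $\iota_*^{-1}\big({\sf w}_1^{\ull{\cal{A}}_{\mr{G}}}(\upxi)\big)$; if instead $\mathfrak{G}_1(\upxi)$ is defined as the set of values of ${\sf w}_1^{\ull{\cal{A}}_{\mr{G}}}$ ranging over all $\mr{H}\ull{\mathbb{Z}}$-orientations of $\upxi$, the same exact-sequence argument applies once that set is identified with a coset of $\im \iota_*$ that contains $0$ exactly when $\upxi$ is $\mr{H}\ull{\cal{A}}_{\mr{G}}$-orientable.
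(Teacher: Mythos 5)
Your argument is correct and is essentially the paper's own proof: the paper also deduces the statement by combining \Cref{thm:w1orient} with the observation that $\mathfrak{G}_1(\upxi)$ is exactly the set of lifts of ${\sf f}^{(0)}_{\upxi,\mr{H}\ull{\cal{A}}_{\mr{G}}}$ through the fiber $\Sigma\mr{H}\ull{\cal{A}}_{\mr{G}}^{\circ}\to\bgl(\mr{H}\ull{\cal{A}}_{\mr{G}})\to\bgl(\mr{H}\ull{\ZZ})$, so that $0\in\mathfrak{G}_1(\upxi)$ precisely when ${\sf w}_1^{\ull{\cal{A}}_{\mr{G}}}(\upxi)=0$. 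Your long exact sequence in Bredon cohomology is just this fiber sequence \eqref{fiberg} read off on $[\mr{B}_+,-]^{\mr{G}}$, and the naturality identity $\phi_*({\sf w}_1^{\ull{\cal{A}}_{\mr{G}}}(\upxi))={\sf w}_1^{\ull{\ZZ}}(\upxi)$ you flag is indeed the (implicit) input that makes the outer triangle of \eqref{fiberg} commute.
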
 
 We show that  the trivial element always appears as a ghost when $\mr{G}$ acts freely on the base space of a bundle.  Consequently, we  have the following result. 
\begin{thm} \label{freeVSghost}  
Let $\upxi$ be a $\mr{H}\ull{\cal{A}}_{\mr{G}}$-homogeneous bundle such that $\mr{G}$ acts freely on 
its base space. Then $\upxi$  is $\mr{H}\ull{\cal{A}}_{\mr{G}}$-orientable if and only if it is $\mr{H}\ull{\mathbb{Z}}$-orientable. 
\end{thm}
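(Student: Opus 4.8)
The plan is to prove the two implications separately, treating the forward direction as formal and concentrating the real work on the converse, where freeness of the action is essential. For the direction ``$\mr{H}\ull{\cal{A}}_{\mr{G}}$-orientable $\Rightarrow \mr{H}\ull{\mathbb{Z}}$-orientable,'' no hypothesis on the action is needed. The unique map of $\Einfty^{\mr{G}}$-ring spectra $\mr{H}\ull{\cal{A}}_{\mr{G}} \to \mr{H}\ull{\mathbb{Z}}$ induces, on $\mr{H}^1$, the homomorphism of unit Mackey functors $\ull{\cal{A}}_{\mr{G}}^{\times} \to \ull{\mathbb{Z}}^{\times}$, and by naturality of the first Stiefel--Whitney class (the same functoriality underlying \Cref{thm:AorientGodd}) it carries ${\sf w}_1^{\ull{\cal{A}}_{\mr{G}}}(\upxi)$ to ${\sf w}_1^{\ull{\mathbb{Z}}}(\upxi)$. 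Thus if ${\sf w}_1^{\ull{\cal{A}}_{\mr{G}}}(\upxi) = 0$ then ${\sf w}_1^{\ull{\mathbb{Z}}}(\upxi) = 0$, and \Cref{thm:w1orient} gives $\mr{H}\ull{\mathbb{Z}}$-orientability.

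For the converse I would invoke \Cref{thm:ghost}: under the standing assumption that $\upxi$ is $\mr{H}\ull{\mathbb{Z}}$-orientable, it is $\mr{H}\ull{\cal{A}}_{\mr{G}}$-orientable precisely when $0 \in \mathfrak{G}_1(\upxi)$, so it suffices to exhibit the trivial ghost. The ghost set is a nonempty subset of the Bredon cohomology group $\mr{H}^1(\mr{B}; \mr{K})$, where $\mr{K} := \ker\!\big(\ull{\cal{A}}_{\mr{G}}^{\times} \to \ull{\mathbb{Z}}^{\times}\big)$ (nonempty because the ghost set is only defined once ${\sf w}_1^{\ull{\mathbb{Z}}}(\upxi)=0$). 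The decisive computation is that $\mr{K}$ vanishes at the free orbit: at level $\mr{G}/e$ the defining map is the augmentation $\mr{A}(e)^{\times} = \{\pm 1\} \to \mathbb{Z}^{\times} = \{\pm 1\}$, which is an isomorphism, so $\mr{K}(\mr{G}/e) = 0$.

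The key step is then to cash this in using freeness. Since $\mr{G}$ acts freely on $\mr{B}$, the space admits a $\mr{G}$-CW structure built entirely from free cells $(\mr{G}/e) \times \mr{D}^n$, so the cellular cochain complex computing $\mr{H}^{*}(\mr{B}; \mr{K})$ only involves the coefficient value $\mr{K}(\mr{G}/e) = 0$; hence $\mr{H}^1(\mr{B}; \mr{K}) = 0$. As $\mathfrak{G}_1(\upxi)$ is a nonempty subset of the zero group, it equals $\{0\}$, so $0 \in \mathfrak{G}_1(\upxi)$ and \Cref{thm:ghost} completes the argument. Equivalently, one may phrase this through the long exact sequence of the short exact sequence $0 \to \mr{K} \to \ull{\cal{A}}_{\mr{G}}^{\times} \to \ull{\mathbb{Z}}^{\times} \to 0$ of Mackey functors, where the vanishing of $\mr{H}^1(\mr{B}; \mr{K})$ makes $\mr{H}^1(\mr{B}; \ull{\cal{A}}_{\mr{G}}^{\times}) \to \mr{H}^1(\mr{B}; \ull{\mathbb{Z}}^{\times})$ injective, forcing ${\sf w}_1^{\ull{\cal{A}}_{\mr{G}}}(\upxi)=0$ whenever ${\sf w}_1^{\ull{\mathbb{Z}}}(\upxi)=0$.

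I expect the only genuine obstacle to be verifying the two structural facts I am leaning on rather than any hard computation: first, that the degree-one Bredon cohomology in which ${\sf w}_1$ and its ghosts live does reduce, for a free action, to the cohomology of $\mr{B}/\mr{G}$ with coefficients in the underlying group $\mr{K}(\mr{G}/e)$ (so that the vanishing of that group kills $\mr{H}^1$); and second, that the definition of $\mathfrak{G}_1(\upxi)$ indeed situates it inside $\mr{H}^1(\mr{B};\mr{K})$ and guarantees nonemptiness from $\mr{H}\ull{\mathbb{Z}}$-orientability. Both are bookkeeping against the setup already established, and neither requires new input beyond the identification $\mr{K}(\mr{G}/e)=0$.
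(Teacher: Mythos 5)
Your proposal is correct and follows the same overall architecture as the paper: the forward implication via functoriality of ${\sf w}_1$ along the ring map $\mr{H}\ull{\cal{A}}_{\mr{G}} \to \mr{H}\ull{\mathbb{Z}}$, and the converse by reducing, through \Cref{thm:ghost}, to the vanishing of $\mr{H}^1(\mr{B};\ull{\cal{A}}_{\mr{G}}^{\circ})$ for $\ull{\cal{A}}_{\mr{G}}^{\circ} = \ker(\ull{\cal{A}}_{\mr{G}}^{\times}\to\ull{\mathbb{Z}}^{\times})$. The only real divergence is how that vanishing is established. You argue at the cochain level: a free action gives a $\mr{G}$-CW structure with only free cells, so the Bredon cochain complex sees only $\ull{\cal{A}}_{\mr{G}}^{\circ}(\mr{G}/e)$, which is $0$ since $\mr{A}_e^{\times}\to\mathbb{Z}^{\times}$ is an isomorphism (consistent with \Cref{thm:Matsuda}). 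The paper instead uses the $\mr{G}$-equivalence $\mr{B} \simeq \mr{B}\times\mathrm{EG}$ to rewrite the cohomology as maps into $\mathrm{F}(\mathrm{EG}_+,\Sigma^n\mr{H}\ull{\cal{A}}_{\mr{G}}^{\circ})$ and then invokes the standard fact that $\mathrm{F}(\mathrm{EG}_+,-)$ turns underlying equivalences into genuine ones, concluding that this function spectrum is contractible in all degrees. Both arguments rest on the identical computational input, namely that the kernel Mackey functor vanishes at the free orbit; yours is more elementary and self-contained (no appeal to the function-spectrum lemma), at the mild cost of assuming a free $\mr{G}$-CW structure on $\mr{B}$, while the paper's version is model-independent and yields the vanishing of $\mr{H}^n(\mr{B};\ull{\cal{A}}_{\mr{G}}^{\circ})$ for all $n$ in one stroke. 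Your long-exact-sequence reformulation at the end is also a valid packaging of the same step.
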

\medskip
In 1966, M.F.  Atiyah  \cite{AtiyahKReal} introduced a subclass of $\mr{C}_2$-equivariant vector bundles whose underlying nonequivariant bundle is a complex vector bundle and the action of $\mr{C}_2$ on the fibers is compatible with the complex conjugation action on $\mathbb{C}$. These bundles are referred to as Atiyah Real bundles. His work implies that the classifying space of Atiyah Real bundles is a ``genuine" $\mr{C}_2$-equivariant infinite loop space, and hence deloops to a  genuine $\mr{C}_2$-spectrum $\kr$  called the connective Atiyah Real $\mr{K}$-theory. 

The tautological Atiyah Real line bundle  $\hat{\upgamma}$ is the
  tautological complex line bundle over the  infinite complex projective space
  equipped with the complex conjugation action of $\mr{C}_2$.
 The Thom isomorphism of  \cite[Theorem 2.4]{AtiyahKReal}  implies that the bundle $\hat{\upgamma}$ is $\kr$-orientable. This automatically implies that $\hat{\upgamma}$ is orientable with respect to $\mr{H}\ull{\mathbb{Z}}$,  the zeroth Postnikov approximation of $\kr$. We  show  that: 
\begin{thm} \label{thm:ARnotorientable}The  tautological Atiyah Real bundle $\hat{\upgamma}$  is not $\mr{H} \ull{\cal{A}}_{\mr{C}_2}$-orientable. 
\end{thm}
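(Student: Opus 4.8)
The plan is to obtain non-orientability from \Cref{thm:ghost} by computing the ghost set $\mathfrak{G}_1(\hat{\upgamma})$ and verifying that $0$ is not one of its elements. First I would check that the two hypotheses of \Cref{thm:ghost} are met. The base is $\mr{B} = \CPinfty$ with its complex-conjugation $\mr{C}_2$-action; it is $\mr{C}_2$-connected with fixed locus $\mr{B}^{\mr{C}_2} = \mathbb{RP}^\infty$, and over every fixed point conjugation acts on the tautological complex line by $z \mapsto \bar{z}$, so the fibre representation is uniformly the regular representation $\uprho$. Hence $\hat{\upgamma}$ is $\mr{H}\ull{\cal{A}}_{\mr{C}_2}$-homogeneous. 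Moreover, Atiyah's theorem \cite{AtiyahKReal} that $\hat{\upgamma}$ is $\kr$-orientable forces $\hat{\upgamma}$ to be $\mr{H}\ull{\mathbb{Z}}$-orientable, since $\mr{H}\ull{\mathbb{Z}}$ is the zeroth Postnikov truncation of $\kr$; equivalently ${\sf w}_1^{\ull{\mathbb{Z}}}(\hat{\upgamma}) = 0$. With both hypotheses in hand, it remains to compute $\mathfrak{G}_1(\hat{\upgamma})$.

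Next I would pin down the group in which the ghost lives. The kernel $\cal{K}$ of the unit map $\ull{\cal{A}}_{\mr{C}_2}^{\times} \to \ull{\mathbb{Z}}^{\times}$ is the Mackey functor with $\cal{K}(\mr{C}_2/e) = 0$ and $\cal{K}(\mr{C}_2/\mr{C}_2) = \mathbb{Z}/2$ generated by the Burnside-ring unit $t-1$, where $t = [\mr{C}_2/e]$: the top map is the augmentation $a + bt \mapsto a + 2b$, which sends the four units $1, -1, 1-t, t-1$ of $\mr{A}(\mr{C}_2)$ to $1, -1, -1, 1$, while the underlying unit map is an isomorphism. Since $\cal{K}$ vanishes on the free orbit, its Bredon cohomology only sees fixed cells, so
\[ \mr{H}^1(\mr{B}; \cal{K}) \cong \mr{H}^1(\mr{B}^{\mr{C}_2}; \mathbb{Z}/2) = \mr{H}^1(\mathbb{RP}^\infty; \mathbb{Z}/2) \cong \mathbb{Z}/2. \]
I would then note that the connecting homomorphism $\mr{H}^0(\mr{B}; \ull{\mathbb{Z}}^{\times}) \to \mr{H}^1(\mr{B}; \cal{K})$ vanishes, because the nontrivial global unit $-1 \in \ull{\mathbb{Z}}^{\times}(\mr{C}_2/\mr{C}_2)$ lifts to the global unit $-1 \in \ull{\cal{A}}_{\mr{C}_2}^{\times}(\mr{C}_2/\mr{C}_2)$. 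Consequently $\cal{K} \hookrightarrow \ull{\cal{A}}_{\mr{C}_2}^{\times}$ is injective on $\mr{H}^1$, and $\mathfrak{G}_1(\hat{\upgamma})$ (\Cref{defn:ghosts}) reduces to a single element of $\mr{H}^1(\mr{B}; \cal{K}) \cong \mathbb{Z}/2$, namely the unique lift of ${\sf w}_1^{\ull{\cal{A}}_{\mr{C}_2}}(\hat{\upgamma})$.

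To evaluate this element I would restrict to the fixed locus, where $\cal{K}$ is supported. The conjugation eigenspace splitting of the fibre $\mathbb{C} = \RR \oplus i\RR$ gives a $\mr{C}_2$-equivariant decomposition
\[ \hat{\upgamma}\big|_{\mathbb{RP}^\infty} \cong \upgamma_{\RR} \oplus (\upgamma_{\RR} \otimes \upsigma), \]
where $\upgamma_{\RR}$ is the tautological real line bundle on $\mathbb{RP}^\infty$ and $\upsigma$ is the sign representation. Applying the additivity \eqref{eqn:additivity} of the first Stiefel--Whitney class, the trivially-acting summand $\upgamma_{\RR}$ contributes the ordinary class $w_1(\upgamma_{\RR})$ through the orientation-reversing unit $-1$, whose $\cal{K}$-component is zero, while the sign-twisted summand $\upgamma_{\RR} \otimes \upsigma$ contributes the same $w_1(\upgamma_{\RR})$ through the $\mr{J}$-homomorphic unit $1 - t$ attached to $\upsigma$, whose $\cal{K}$-component is the generator $t-1$. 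Summing, the $\ull{\mathbb{Z}}$-components cancel (consistent with ${\sf w}_1^{\ull{\mathbb{Z}}}(\hat{\upgamma}) = 0$) and the ghost equals $w_1(\upgamma_{\RR})$, the nonzero class in $\mr{H}^1(\mathbb{RP}^\infty; \mathbb{Z}/2) \cong \mathbb{Z}/2$. Thus $0 \notin \mathfrak{G}_1(\hat{\upgamma})$, and \Cref{thm:ghost} yields that $\hat{\upgamma}$ is not $\mr{H}\ull{\cal{A}}_{\mr{C}_2}$-orientable.

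The main obstacle is the last step: showing that the sign-representation summand forces the Burnside unit $1 - t$ rather than the naive unit $-1$. Were the $\mr{J}$-homomorphic unit of $\upsigma$ equal to $-1$, both summands would contribute the same class and the ghost would vanish, making $\hat{\upgamma}$ $\mr{H}\ull{\cal{A}}_{\mr{C}_2}$-orientable; so the whole theorem rests on identifying this unit as $1 - t$. Establishing this requires unwinding \Cref{defn:SW}, tracing the image of $\upsigma$ under the equivariant $\mr{J}$-homomorphism into $\ull{\cal{A}}_{\mr{C}_2}^{\times}$, and evaluating it on fixed points; the remaining ingredients (the eigenbundle splitting, the cohomology of $\mathbb{RP}^\infty$, and the vanishing of the connecting map) are routine. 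This is also precisely where the even order of $\mr{C}_2$ enters: for a group of odd order the kernel $\cal{K}$ is trivial and no such discrepancy can arise, in agreement with \Cref{thm:AorientGodd}.
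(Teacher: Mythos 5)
Your route is genuinely different from the paper's and, as architecture, it is sound: you reduce to the ghost criterion of \Cref{thm:ghost}, observe that the kernel Mackey functor $\ull{\cal{A}}_{\mr{C}_2}^{\circ}$ vanishes on the free orbit so that $\mr{H}^1_{\mr{C}_2}(\mr{B};\ull{\cal{A}}_{\mr{C}_2}^{\circ})\cong\mr{H}^1(\mathbb{RP}^\infty;\mathbb{Z}/2)\cong\mathbb{Z}/2$, show the indeterminacy $\mr{D}$ vanishes because $-1$ lifts, and then split $\hat{\upgamma}|_{\mathbb{RP}^\infty}\cong\upgamma_{\mathbb{R}}\oplus(\upgamma_{\mathbb{R}}\otimes\upsigma)$ into conjugation eigenbundles and apply additivity. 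All of those surrounding steps (homogeneity with fibre $\uprho$, $\mr{H}\ull{\mathbb{Z}}$-orientability via Atiyah, the identification of $\ull{\cal{A}}_{\mr{C}_2}^{\circ}$, the cohomology computation, the vanishing connecting map) check out. The paper instead restricts the classifying map to the circle $\mr{S}^1=\mathbb{RP}^1$ in the fixed locus, where the bundle becomes the M\"obius bundle with fibre $\uprho$, identifies the resulting class in $\pi_1^{\mr{C}_2}\bo_{\mr{C}_2}\cong\RO(\mr{C}_2)/\RU(\mr{C}_2)=\mathbb{Z}/2\{[1],[\upsigma]\}$ (Greenlees) as the nonzero element $[1+\upsigma]$, and concludes via the fact that $\mJJ_{\mr{C}_2}$ induces an isomorphism $\ull{\pi}_1(\bo_{\mr{C}_2})\cong\ull{\pi}_1(\bgl(\mathbb{S}_{\mr{C}_2}))$.

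However, the proposal is not a complete proof: the one step that carries the entire content --- that the unit attached to the sign-twisted summand is $1-[\mr{C}_2/{\sf e}]$ rather than $-1$, equivalently that the equivariant $\mr{J}$-homomorphism does not annihilate $[\upsigma]-[1]$ --- is asserted and explicitly labelled ``the main obstacle'' but never established. Since, as you yourself note, the theorem would be \emph{false} if that unit were $-1$, this is a genuine gap and not a routine verification left to the reader. Two ways to close it: (i) the paper's, quoting Greenlees's computation of $\pi_1^{\mr{C}_2}\bo_{\mr{C}_2}$ together with the $\ull{\pi}_1$-isomorphism induced by $\mJJ_{\mr{C}_2}$; or (ii) directly in your framework, by computing the tom Dieck degree of the self-map $-\mathrm{id}$ of $\mr{S}^{\upsigma}$: it restricts to the identity on the fixed sphere $\mr{S}^0$ and has underlying degree $-1$, and the unique element of $\mr{A}_{\mr{C}_2}$ with fixed-point degree $1$ and underlying degree $-1$ is $1-[\mr{C}_2/{\sf e}]$, which differs from $-1=\mJJ_{\mr{C}_2}(-\mathrm{id}_{\mr{S}^1})$ exactly by the kernel generator $[\mr{C}_2/{\sf e}]-1$. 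With either of these inputs supplied, your argument goes through.
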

Let $\upgamma_{\uprho}$ denote the $\uprho$-dimensional bundle over $\mr{B}_{\mr{G}}\Sigma_2$ obtained by setting  $\Pi = \Sigma_2$ and $\mr{V} = \uprho \otimes \uptau$ in \eqref{eqn:generalbundle}. In some sense, $\upgamma_{\uprho}$
is  the $\mr{G}$-equivariant analog of the real tautological line bundle, therefore fundamental  in the study of  equivariant homotopy theory. In the forthcoming work \cite{BZZ}, we use 
$\upgamma_{\uprho}$  to construct $\mr{G}$-equivariant Steenrod operations  extending \cite[$\mathsection$3]{BGL2}. In this paper, we determine its orientability with respect to 
$\mr{H}\ull{\ZZ}$ as well as $\mr{H}\ull{\cal{A}}_{\mr{G}}$. 

 When $\mr{G} = \mr{C}_2$, there exists a map of $\mr{C}_2$-equivariant vector bundle (see \cite[Remark 3.20]{BGL2})
\begin{equation} \label{C2bundleMap} 
\begin{tikzcd}
\upgamma_{\uprho} \rar & \hat{\upgamma}, 
\end{tikzcd}
\end{equation}
thus an $\mr{H}\ull{\mathbb{Z}}$-orientation of $\hat{\upgamma}$ leads to an  $\mr{H}\ull{\mathbb{Z}}$-orientation of $\upgamma_{\uprho}$. However, 
$\upgamma_{\uprho}$ is not orientable with respect to $\mr{H} \ull{\cal{A}}_{\mr{C}_2}$. We end the paper by generalizing this result to all finite groups of even order using \Cref{SWcompare} which gives an explicit formula for computing the first Stiefel-Whitney class of an induced bundle (as defined in \Cref{Indbundle}):
% We end the paper first Stiefel-Whitney classes of induced bundles (see \Cref{}) 
 % one can deduce that \prst{$\hat{\upgamma}$} \pr{$\upgamma_{\uprho}$} is not  $\mr{H} \ull{\cal{A}}_{\mr{C}_2}$-orientable. 
 
\begin{thm} \label{thm:gamma_rho_orientation} If $\mr{G}$ is a finite group of
  even order, then $\upgamma_{\uprho}$ is $\mr{H}\ull{\mathbb{Z}}$-orientable,
    but not $\mr{H}\ull{\cal{A}}_{\mr{G}}$-orientable.
    \end{thm}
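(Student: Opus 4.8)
The plan is to funnel $\upgamma_{\uprho}$ through the two--step obstruction theory and to pin down its first Stiefel--Whitney class as a single explicit unit in the Burnside ring: the $\mr{G}$--equivariant degree of the antipodal map on the regular representation sphere. First I would note that $\upgamma_{\uprho} = \uplambda_{\uprho\otimes\uptau}$ is homogeneous, as the case $n=1$, $\mr{W} = \uptau$ of \Cref{thm:Vhomogeneous}; hence \Cref{thm:w1orient} applies and, for each coefficient ring in sight, $\R$--orientability of $\upgamma_{\uprho}$ is equivalent to ${\sf w}_1^{\R}(\upgamma_{\uprho}) = 0$. The base $\mr{B}_{\mr{G}}\Sigma_2$ is the classifying space of the universal $\mr{G}$--equivariant double cover $\mr{E}_{\mr{G}}\Sigma_2 \to \mr{B}_{\mr{G}}\Sigma_2$, and $\upgamma_{\uprho}$ is associated to this cover with fibre $\uprho\otimes\uptau$, on which the deck transformation acts by the scalar $-1$. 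Using \Cref{defn:SW} and the equivariant $\mr{J}$--homomorphism, I would identify ${\sf w}_1^{\cal{T}}(\upgamma_{\uprho})$ with the image of the universal double--cover class under the homomorphism $\mathbb{Z}/2 \to \cal{T}^{\times}$ carrying the generator to the $\mr{G}$--equivariant degree
\[ u = \deg\bigl(-\mathrm{id}\colon \mr{S}^{\uprho}\to\mr{S}^{\uprho}\bigr) \in \bigl(\pi_0^{\mr{G}}\mr{H}\cal{T}\bigr)^{\times} = \cal{T}^{\times}(\mr{G}/\mr{G}). \]
Since $u^2 = 1$, this is already consistent with the additivity formula \eqref{eqn:additivity} and with \Cref{thm:HAtwofold}: the $2$--fold sum carries monodromy $u^2 = 1$ and so is orientable.

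The unit $u$ is then computed one mark at a time. For $\mr{H}\le\mr{G}$ the $\mr{H}$--mark of $u$ is the degree of $-\mathrm{id}$ on the fixed sphere $\mr{S}^{\uprho^{\mr{H}}}$, namely $(-1)^{\dim\uprho^{\mr{H}}}$; and because $\uprho = \uprho_{\mr{G}}$ is regular, $\dim\uprho^{\mr{H}} = [\mr{G}:\mr{H}]$. Hence
\[ u^{\mr{H}} = (-1)^{[\mr{G}:\mr{H}]}\qquad\text{for every } \mr{H}\le\mr{G}. \]
The image of $u$ under the augmentation $\ull{\cal{A}}_{\mr{G}}^{\times}\to\ull{\mathbb{Z}}^{\times}$ is the $e$--mark $u^{e} = (-1)^{|\mr{G}|} = +1$, because $|\mr{G}|$ is even. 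Thus the composite $\mathbb{Z}/2 \to \ull{\cal{A}}_{\mr{G}}^{\times}\to\ull{\mathbb{Z}}^{\times}$ is trivial, ${\sf w}_1^{\ull{\mathbb{Z}}}(\upgamma_{\uprho}) = 0$, and $\upgamma_{\uprho}$ is $\mr{H}\ull{\mathbb{Z}}$--orientable. This is where evenness enters positively; note that the flexible notion of orientability in \Cref{main1} is essential here, since the determinant character of $\upgamma_{\uprho}$ need not vanish.

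For the negative half I would detect $u$ at a Sylow $2$--subgroup $\mr{P}\le\mr{G}$, which is nontrivial exactly because $|\mr{G}|$ is even. As $[\mr{G}:\mr{P}]$ is odd, the $\mr{P}$--mark is $u^{\mr{P}} = (-1)^{[\mr{G}:\mr{P}]} = -1$, so $u\neq 1$ while $u$ lies in the kernel $\mathfrak{K} = \ker\bigl(\ull{\cal{A}}_{\mr{G}}^{\times}\to\ull{\mathbb{Z}}^{\times}\bigr)$. Feeding this into the ghost criterion \Cref{thm:ghost}, it remains to see that the nonzero coefficient $u$ yields a nonzero class in $\mr{H}^1(\mr{B}_{\mr{G}}\Sigma_2;\mathfrak{K})$, so that $0\notin\mathfrak{G}_1(\upgamma_{\uprho})$ and $\upgamma_{\uprho}$ is not $\mr{H}\ull{\cal{A}}_{\mr{G}}$--orientable. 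For $\mr{G} = \mr{C}_2$ the element $u$ has marks $(u^{\mr{C}_2},u^{e}) = (-1,+1)$, i.e. $u = [\mr{C}_2/e]-1$, recovering \Cref{thm:ARnotorientable}.

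The main obstacle is the identification carried out in the first step: proving that ${\sf w}_1^{\cal{T}}(\upgamma_{\uprho})$ is exactly the push--forward of the universal double--cover class along $\langle u\rangle\hookrightarrow\cal{T}^{\times}$, and that nonvanishing of the \emph{unit} $u$ forces nonvanishing of the \emph{cohomology class}. The latter is not automatic: one must show that the inclusion $\langle u\rangle\hookrightarrow\ull{\cal{A}}_{\mr{G}}^{\times}$ stays injective after applying $\mr{H}^1(\mr{B}_{\mr{G}}\Sigma_2;-)$, so that the class seen at the Sylow level survives. This requires the fixed--point description of $\mr{B}_{\mr{G}}\Sigma_2$ from \Cref{thm:fixBGPi} together with the Picard--spectrum formulation of ${\sf w}_1$; once it is in place, the remaining content is just the mark--by--mark computation above.
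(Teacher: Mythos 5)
Your obstruction element is exactly the right one: the unit $u$ with marks $u^{\mr{H}}=(-1)^{[\mr{G}:\mr{H}]}$ is precisely the multiplicative norm $\mr{N}_{\sf e}^{\mr{G}}(-1)={\sf tr}(-1)$ that the paper computes, and your mark-by-mark analysis (trivial ${\sf e}$-mark when $|\mr{G}|$ is even, yet $u\neq 1$ because some mark is $-1$; the $\mr{G}$-mark $(-1)^{[\mr{G}:\mr{G}]}=-1$ does this even more directly than your Sylow detour) agrees with the paper's appeal to the norm formula in the Burnside Tambara functor. However, the two steps you yourself flag as ``the main obstacle'' are not incidental technicalities --- they are the entire content of the paper's proof --- and your proposal does not close either of them.

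The paper closes them as follows. For the identification of ${\sf w}_1$, it does not argue via monodromy of the universal double cover; instead it proves \Cref{SWcompare}, that ${\sf w}_1^{\R}({\sf Ind}_{\mr{H}}^{\mr{G}}\upxi)={\sf tr}\bigl({\sf w}_1^{\upiota_{\mr{H}}(\R)}(\upxi)\bigr)$, by chasing the classifying map through the induction--restriction adjunction on $[\,-\,,\ko]$ and $[\,-\,,\pic(\R)]$, and then uses $\upgamma_{\uprho}\cong{\sf Ind}_{\sf e}^{\mr{G}}(\upgamma_1)$. For the detection problem --- your correct worry that a nonzero unit $u$ need not support a nonzero class in $\mr{H}^1_{\mr{G}}(\mr{B}_{\mr{G}}\Sigma_2;\ull{\cal{A}}_{\mr{G}}^{\times})$ --- the paper restricts to the $1$-skeleton $\mr{S}^1$ of a component of $(\mr{B}_{\mr{G}}\Sigma_2)^{\mr{G}}\simeq\bigsqcup\mr{B}\Sigma_2$ (via \Cref{thm:fixBGPi}), over which $\upgamma_{\uprho}$ becomes ${\sf Ind}_{\sf e}^{\mr{G}}(\mathfrak{m})$ for $\mathfrak{m}$ the M\"obius bundle; since $\mr{G}$ acts trivially on this circle, $\mr{H}^1_{\mr{G}}(\mr{S}^1;\ull{\cal{A}}_{\mr{G}}^{\times})\cong\ull{\cal{A}}_{\mr{G}}^{\times}(\mr{G}/\mr{G})$, so the restricted cohomology class \emph{is} the element ${\sf tr}(-1)=u\neq 1$, and nonvanishing of a restriction forces nonvanishing of the class. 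Without some such step your argument does not exclude the possibility that the nonzero coefficient $u$ carries only the zero class. The positive half has the same issue in milder form; the paper settles it by ${\sf w}_1^{\ull{\ZZ}}({\sf Ind}_{\mr{T}}^{\mr{G}}\upgamma_{\uprho_{\mr{T}}})={\sf tr}(0)=0$ for $\mr{T}$ of order $2$, using \Cref{thm:ARnotorientable} and \eqref{C2bundleMap} to reduce to the $\mr{C}_2$ case, again via \Cref{SWcompare}. If you supply proofs of your two flagged steps you will essentially have rediscovered that lemma and that restriction argument.
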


\begin{notn}  This paper relies heavily on equivariant Picard spectra whose construction, unfortunately,  does not appear in the literature. The authors find the modern infinity apparatus convenient to describe their construction in short, and  therefore,  throughout this paper 
\begin{itemize}
\item   $\Top^{\mr{G}}$  is the $\infty$-category associated with simplicial model category of topological $\mr{G}$-spaces, 
\item $\Top^{\mr{G}}_*$  is the $\infty$-category of pointed topological $\mr{G}$-spaces, and 
\item  $\Sp^{\mr{G}}$ is the stable $\infty$-category of genuine
  $\mr{G}$-spectra (see \cite[p2]{Expose2}).
\end{itemize} 
 If  the construction of equivariant Picard spectra is taken  as a black box,
  the results in this paper, and their proofs, are independent of the model of $\Sp^{\mr{G}}$. They 
 % (a modern poinset category of genuine $\mr{G}$-spectra) and  $\Top^{\mr{G}}$ (a category of $\mr{G}$-spaces); they 
 rely on the loop-suspension adjunction 
\begin{equation}  \label{loop-suspension}
\begin{tikzcd}
\Sigma^{\infty}_{\mr{G}}:{\bf Ho}(\Top^{\mr{G}}_*) \rar[shift right] &  {\bf Ho}(\Sp^{\mr{G}}) : \Omega^{\infty}_{\mr{G}} \lar[shift right],
\end{tikzcd}
\end{equation}
which always exists between homotopy categories.
%\change{Readers willing to leave the construction of equivariant Picard spectra as a black box can avoid point-set level definitions of spectra.}
\end{notn}
\begin{notn}
We will use $ [- , -]^{\mr{G}} $ to denote the the homotopy class of $\mr{G}$-equivariant maps (as usual) in  
 $\Top^{\mr{G}}_*$  as well as $\Sp^{\mr{G}}$.
\end{notn}

\subsection*{Acknowledgement.} 
This paper has  greatly benefitted  from many insightful exchanges with J.P. May
on the topic of equivariant orientation theory.  We have also benefitted from
our conversations with Vigleik Angelveit, Samik Basu, Agnes Beaudry, Hood Chatham, Bert
Guillou, John Greenlees, Christy Hazel, Mike Hill, Ang Li,   Mike Mandell, Clover May, and Doug Ravenel. We are grateful to the anonymous referee for several useful comments, and  for suggesting a discussion on complex orientations of equivariant cohomology theories (see \Cref{rmk:complexorient}). 

The first author would also like to thank Tata Institute of Fundamental Research at Mumbai for their hospitality during the period when a part of this work was done. 

The research in this paper is supported by the National Science Foundation through grant DMS-2305016.

\subsection*{Organization of the paper}
In \Cref{Sec:EOtheory}, first we discuss the concept of homogeneity and prove \Cref{thm:Vhomogeneous}. This theorem provides examples of homogeneous bundles as it gives a sufficient condition that guarantees homogeneity of equivariant bundle  $ \uppi_{\uplambda_{\mr{V}}}$ of \eqref{eqn:generalbundle}. In \Cref{subsec:relhomogeneity}, we introduce the concept of relative homogeneity, and in \Cref{subsec:orient-thom}, we prove \Cref{main1} which is an equivariant generalization of the classical Thom isomorphism theorem. 

In \Cref{sec:SW}, we define the equivariant generalization of the first Stiefel--Whitney class, and use it to determine orientability of equivariant vector bundles with respect to  $\mr{H}\ull{\ZZ}$ and $\mr{H}\ull{\cal{A}}_{\mr{G}}$. In the process, we prove \Cref{thm:HZtwofold}, \Cref{thm:HAtwofold}, \Cref{thm:AorientGodd}, \Cref{thm:ghost}, \Cref{freeVSghost}, \Cref{thm:ARnotorientable}, and \Cref{thm:gamma_rho_orientation}. 

In  \Cref{Appendix:unit}, we  recall some general formulas needed in identifying the Mackey functor structure of the units of the Burnside Tambara functor $\ull{\cal{A}}_{\mr{G}}^{\times}$.  We also record them explicitly  for a few familiar abelian $2$-groups anticipating future applications.

\section{Equivariant Orientation theory} \label{Sec:EOtheory}

A $\mr{G}$-equivariant vector bundle $\upxi$ over a base space $\mr{B}$ consists of a map $\uppi_{\upxi}: \mr{E} \longrightarrow \mr{B}$ in $\Top_{\mr{G}}$ such that the fiber $\mr{E}_b$ over $b \in \mr{B}$ is a real vector space, and the fiberwise action of  ${\sf g} \in \mr{G}$ 
\[ 
\begin{tikzcd}
{\sf g}: \mr{E}_{b} \rar & \mr{E}_{{\sf g} \cdot  b}
\end{tikzcd}
\]
is a linear isomorphism. 

\begin{defn} \label{defn:genuinestable}
Two $\mr{G}$-equivariant vector bundles, $\upxi_1$ and $\upxi_2$,  are
genuinely stably equivalent  if 
there exists some $\mr{G}$-representation $\mr{V}$ such that 
   \[ \upxi_1 \oplus \upepsilon_{\mr{V}} \cong  \upxi_2 \oplus \upepsilon_{\mr{V}},\]
where $\upepsilon_{\mr{V}}$ is the trivial bundle $\mr{B} \times \mr{V}$ over $\mr{B}$.  
\end{defn}
Genuine stable equivalence classes of vector bundles are classified by the infinite loop space of the genuine $\mr{G}$-equivariant real $\mr{K}$-theory 
\begin{equation} \label{repnkoG} 
\{ \text{$\mr{G}$-equivariant vector bundle over $\mr{B}$} \} /(\sim_{\mr{G}}) \cong [\mr{B}_+,\Omega^{\infty}_{\mr{G}} \ko_{\mr{G}}]^{\mr{G}}, 
\end{equation}
where $\sim_{\mr{G}}$ stands for genuine stable equivalence \cite[pg 134]{SegalequivK}. 

\begin{notn}  \label{notn:classify}
Let  ${\sf f}_{\upxi}: \mr{B} \longrightarrow \Omega_{\mr{G}}^{\infty}\ko_{\mr{G}}$ denote the
classifying map  (in the homotopy category of $\Top^{\mr{G}}$) for a
$\mr{G}$-equivariant bundle $\upxi$ over $\mr{B}$. We will use the same notation
to  denote the corresponding map 
$
\begin{tikzcd}
  \Sigma^{\infty}_{\mr{G}}\mr{B}_+ \rar & \ko_{\mr{G}}
\end{tikzcd}
$ 
of spectra obtained using the  adjunction \eqref{loop-suspension}.
\end{notn}

When $\mr{G}$ is the trivial group, the map (in $\Set$)
\begin{equation} \label{eqn:dimconst}
\begin{tikzcd}
 \pi_0({\sf f}_{\upxi}) : \{ \text{path components of $\mr{B}$}\} \rar & \ZZ 
 \end{tikzcd}
 \end{equation}
assigns each path component the dimension of $\upxi$ over it. 
In order to identify an orientation  using a single Thom class, we require $\pi_0({\sf f}_{\upxi})$ to be  a constant map. 

 We will now introduce the notion of homogeneity for $\mr{G}$-equivariant vector bundles,  which  is essentially a equivariant generalization of   ``${\sf f}_{\upxi}$ inducing a constant map on the zeroth homotopy''.

\subsection{Homogeneity of vector bundles}  \label{subsec:homogeneity} \ 

When $\mr{G}$ is a  nontrivial group,  there are a variety of morphisms in
$\cal{O}_{\mr{G}}$-$\Set$ which can be regarded as constant maps. In order to
describe them, we first recall that a set $\cal{F}$ consisting of subgroups of
$\mr{G}$ is called a \emph{family} if it is closed under inclusions up to
conjugations. For a family $\cal{F}$, define the $\cal{O}_{\mr{G}}$-$\Set$ 
\[ 
{\sf c}_{\cal{F}}(\mr{G}/\mr{H}) = \left\lbrace 
\begin{array}{ccccc}
\ast  & \text{if $\mr{H} \in \cal{F}$} \\
\emptyset & \text{otherwise.}
\end{array}
\right.
\]
\begin{defn}
An $\cal{F}$-point of an  $ \cal{O}_{\mr{G}}$-$\Set$  $\mr{X}$ is an $\cal{O}_{\mr{G}}$-map 
\[
\begin{tikzcd} 
\mr{p}: {\sf c}_{\cal{F}} \rar & \mr{X}.
\end{tikzcd} 
\]
 Thus, an $\cal{F}$-point $\mr{p}$ consists of  a collection of points $\mr{p}_{\mr{H}}\in \mr{X}(\mr{G}/\mr{H}) $ for  each $\mr{H} \in \cal{F}$, such that $ \mr{X}(\alpha)(\mr{p}_{\mr{H}_1}) =\mr{p}_{\mr{H}_2}$ whenever $\mr{H}_1, \mr{H}_2 \in \cal{F}$ and $\alpha:\mr{G}/\mr{H}_2 \to \mr{G}/\mr{H}_1$ is a morphism in $\cal{O}_{\mr{G}}$. 
\end{defn}
\begin{notn}
 Let $\All_{\mr{G}}$ denote the family consisting of all subgroups of $\mr{G}$.  
 \end{notn}
 \begin{notn} For a $\mr{G}$-space $\mr{B}$,  we let 
 \[ 
 \begin{tikzcd}
 \ull{\pi}_0(\mr{B}): \mathcal{O}_{\mr{G}}^{\mr{op}} \rar & \Set 
 \end{tikzcd}
 \]
  be the $\mathcal{O}_{\mr{G}}$-set which assigns $\mr{G}/\mr{H}$ the set of path components of $\mr{B}^{\mr{H}}$ for any subgroup $\mr{H}$ of $\mr{G}$. We forget the basepoint if $\mr{B}$ has one.
 \end{notn}
 \begin{rmk} For a $\mr{G}$-spectrum $\mr{R} \in \Sp^{\mr{G}}$, the $\mathcal{O}_{\mr{G}}$-set
   $\ull{\pi}_0(\Omega^{\infty}_{\mr{G}} \R)$ has a Mackey functor structure as it is isomorphic to   the zeroth homotopy group of $\R$ given by 
  \[ \ull{\pi}_0(\R)(\mr{G}/\mr{H}):= [\mathbb{S}_{\mr{G}} \sma (\mr{G}/\mr{H})_+, \R]^{\mr{G}},\]
  for any subgroup $\mr{H}$ of $\mr{G}$. 
 \end{rmk}
\begin{defn}  \label{defn:homogeneous} We say a $\mr{G}$-vector bundle  $\upxi$ is  \emph{homogeneous}  if $\underline{\pi}_0({\sf f}_{\upxi})$ admits a factorization
\begin{equation} \label{eqn:factor}
 \begin{tikzcd}
&& {\sf c}_{\All_{\mr{G}}} \dar["\mr{p}"] \\
\underline{\pi}_0(\mr{B}) \ar[rru, dashed] \ar[rr,"\ull{\pi}_0 ({\sf f}_{\upxi})"'] &&\underline{\pi}_0( \Omega^{\infty}_{\mr{G}} \ko_{\mr{G}}),
\end{tikzcd}
\end{equation}
through   an $\All_{\mr{G}}$-point $\mr{p}$ of $\underline{\pi}_0\ko_{\mr{G}}$. We then call $\mr{p}$ a \emph{coordinate of homogeneity} of $\upxi$. 
\end{defn}

By  definition (see  \eqref{repnkoG}), the zeroth homotopy of $\ko_{\mr{G}}$   
is the real representation Mackey functor
$\underline{\RO}_{\mr{G}}$ given by  \[ 
\underline{\RO}_{\mr{G}} (\mr{G}/\mr{H}) \cong \RO(\mr{H})
\]
for any subgroup $\mr{H} \subset \mr{G}$. Thus, an $\All_{\mr{G}}$-point $\mr{p}$ of $\underline{\pi}_0(\Omega^{\infty}_{\mr{G}}\ko_{\mr{G}})$ is determined by $\mr{p}_{\mr{G}} \in \RO(\mr{G})$ as $\mr{p}_{\mr{H}}$ is simply the restriction of $\mr{p}_\mr{G}$ to the subgroup $\mr{H}$ of $ \mr{G}$.  Therefore, we declare  $\mr{p}_{\mr{G}}$ as the \emph{dimension}
\[ 
 \dim (\upxi, \mr{p}) := \mr{p}_{\mr{G}}
\]
 of the pair $(\upxi, \mr{p})$.

\begin{rmk}[Geometric interpretation] \label{rmk:geometry} \label{rmk:GIhomogeneity}
  For a homogeneous $\mr{G}$-equivariant vector bundle $\upxi$ whose coordinate of homogeneity is $\mr{p}$, the fiber over any point in $\mr{B}^{\mr{H}}$ is an $\mr{H}$-representation representing the class  $\mr{p}_{\mr{H}} \in \RO(\mr{H})$ (also see \cite[Example 1.1.3]{CWordinary}). 
  \end{rmk}
 The coordinate of homogeneity of a $\mr{G}$-equivariant bundle may not be unique.  Such situations may arise, when the $\mr{H}$-fixed points of the base space is empty for some subgroup $\mr{H} \subset \mr{G}$ as demonstrated by the following example. 
\begin{ex}  Let $\upxi$ denote the $\mr{C}_2$-equivariant bundle  
\[ 
\begin{tikzcd}
\uppi_{\upxi}: 
\mr{S}(2 \upsigma) \times \upsigma \rar &
\mr{S}(2 \upsigma),
\end{tikzcd}
\] 
where $\upsigma$ is the sign representation of $\mr{C}_2$. Then  $\underline{\pi}_0({\sf f}_{\upxi})$ induces a map of $\cal{O}_{\mr{C}_2}$-$\Set$ given by the diagram  
\[
\begin{tikzcd}
\emptyset \rar \dar & \RO(\mr{C}_2) \dar["\res"]  \\
\{ \ast \} \rar & \ZZ
\end{tikzcd}
 \]
 that maps $\ast$ to  $1$. Thus, there are infinitely many coordinates of homogeneity.
\end{ex}

\subsection{Examples of homogeneous bundles}    \

Products and pullbacks of homogeneous bundles are homogeneous, and therefore, the interest lies in knowing which among the universal examples of  $\mr{G}$-equivariant bundles defined in \eqref{eqn:generalbundle} are homogeneous. %\Cref{thm:Vhomogeneous} 
We begin by recalling a result of  Lashof and May  \cite[Theorem 10]{LM} which identifies all $\mr{G}$-connected components of the base space of these bundles. We also sketch a proof following \cite{LM} as the details are necessary in the  proof of \Cref{thm:Vhomogeneous}. 

\begin{thm}[Lashof-May] \label{thm:fixBGPi} For a subgroup $\mr{H}$ of $\mr{G}$, the $\mr{H}$-fixed
  points of $\mr{B}_{\mr{G}} \Pi$  is equivalent to 
\begin{equation} \label{eqn:fixequiv}
 (\mr{B}_{\mr{G}} \Pi)^{\mr{H}} \simeq \bigsqcup_{[\uptheta] \in \Theta} \mr{B}\cal{Z} (\uptheta)
 \end{equation}
 where 
 \begin{itemize}
 \item  $\Theta$ is the set of $\Pi$-conjugacy classes of group homomorphism $\uptheta: \mr{H} \to \Pi$, 
 \item  $\cal{Z} (\uptheta)$ is the subgroup  of $\Pi$ consisting of elements that commute with the image
   of $\uptheta$, which, up to conjugation, is independent of the choice of
   the representative $\uptheta$.
 \end{itemize}
 \end{thm}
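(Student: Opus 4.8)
The plan is to work entirely on the total space $X := \mr{E}_{\mr{G}}\Pi$, which is a $(\mr{G}\times\Pi)$-space on which $\Pi$ acts freely and which, by its universal property \eqref{eq:universal}, satisfies $X^{\Lambda}\simeq\ast$ when $\Lambda\subseteq\mr{G}\times\Pi$ is a graph subgroup (i.e. $\Lambda\cap\Pi=\{e\}$) and $X^{\Lambda}=\emptyset$ otherwise. Since $\mr{B}_{\mr{G}}\Pi=X/\Pi$, the task is to compute $(X/\Pi)^{\mr{H}}$. I would do this by first sorting the $\mr{H}$-fixed $\Pi$-orbits according to a conjugacy class of homomorphism $\mr{H}\to\Pi$, and then identifying each resulting piece with a classifying space via the universal property.

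First I would attach an invariant in $\Theta$ to each $\mr{H}$-fixed orbit. Writing the $\Pi$-action on the right and the commuting $\mr{G}$-action on the left, a $\Pi$-orbit $x\Pi$ is $\mr{H}$-fixed exactly when $\mr{h}x\in x\Pi$ for all $\mr{h}\in\mr{H}$; as $\Pi$ acts freely there is then a unique $\uptheta_x(\mr{h})\in\Pi$ with $\mr{h}x=x\,\uptheta_x(\mr{h})$. A short computation, $(\mr{h}_1\mr{h}_2)x=\mr{h}_1(x\,\uptheta_x(\mr{h}_2))=x\,\uptheta_x(\mr{h}_1)\uptheta_x(\mr{h}_2)$, shows $\uptheta_x\colon\mr{H}\to\Pi$ is a homomorphism, and replacing $x$ by $x\rho$ for $\rho\in\Pi$ replaces $\uptheta_x$ by $\rho^{-1}\uptheta_x(-)\rho$. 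Hence each $\mr{H}$-fixed orbit determines a well-defined class $[\uptheta]\in\Theta$, and this partitions $(X/\Pi)^{\mr{H}}$ over $\Theta$.

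Next I would identify the piece attached to a chosen representative $\uptheta$. Let $\Gamma_\uptheta=\{(\mr{h},\uptheta(\mr{h})):\mr{h}\in\mr{H}\}\subseteq\mr{G}\times\Pi$ be the graph subgroup of $\uptheta$, so that $x\in X^{\Gamma_\uptheta}$ precisely when $\mr{h}x=x\,\uptheta(\mr{h})$ for all $\mr{h}\in\mr{H}$. The map $x\mapsto x\Pi$ carries $X^{\Gamma_\uptheta}$ onto the $[\uptheta]$-piece, and by the conjugation formula above $x\rho$ again lies in $X^{\Gamma_\uptheta}$ if and only if $\rho$ commutes with the image of $\uptheta$, i.e. $\rho\in\cal{Z}(\uptheta)$. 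Thus the $[\uptheta]$-piece is $X^{\Gamma_\uptheta}/\cal{Z}(\uptheta)$. Because $\Gamma_\uptheta$ is a graph subgroup, the universal property gives $X^{\Gamma_\uptheta}\simeq\ast$; since $\cal{Z}(\uptheta)\subseteq\Pi$ acts freely on it, $X^{\Gamma_\uptheta}$ is a model for $\mr{E}\cal{Z}(\uptheta)$, so $X^{\Gamma_\uptheta}/\cal{Z}(\uptheta)\simeq\mr{B}\cal{Z}(\uptheta)$. Assembling the pieces over $[\uptheta]\in\Theta$ yields \eqref{eqn:fixequiv}.

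The step that I expect to require the most care is showing that the partition over $\Theta$ is a genuine disjoint union of open–closed pieces, not merely a set-theoretic partition: concretely, that $x\Pi\mapsto[\uptheta]$ is locally constant on $(X/\Pi)^{\mr{H}}$. For this I would use that $\Pi$ is a compact Lie group acting freely and properly, so that $x\mapsto\uptheta_x$ is continuous and orbits with non-conjugate $\uptheta$ cannot be joined within the fixed-point set; the same compactness is what legitimizes treating a contractible free $\cal{Z}(\uptheta)$-space as $\mr{E}\cal{Z}(\uptheta)$. The remaining identities (that $\uptheta_x$ is a homomorphism and that the stabilizer of $X^{\Gamma_\uptheta}$ inside $\Pi$ is exactly $\cal{Z}(\uptheta)$) are routine verifications once the conventions are fixed.
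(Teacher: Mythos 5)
Your proposal is correct and follows essentially the same route as the paper's sketch: both use the universal property of $\mr{E}_{\mr{G}}\Pi$ expressed via graph subgroups to assign to each $\mr{H}$-fixed $\Pi$-orbit a conjugacy class $[\uptheta]\in\Theta$, and both identify the corresponding component with $\mr{B}\cal{Z}(\uptheta)$ by exhibiting the $\Lambda_{\uptheta}$-fixed points of the total space as a contractible free $\cal{Z}(\uptheta)$-space. Your treatment is if anything slightly more explicit about the point-set issues (local constancy of the partition), which the paper leaves implicit.
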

\begin{proof}[Sketch proof]
Let $\Lambda_{\uptheta} := (\uptheta(h),h) $ denote the graph subgroup of the group homomorphism $\uptheta: \mr{H} \to \Pi$. The universal property of $\mr{E}_{\mr{G}}\Pi$ implies 
\begin{equation}
\label{eq:universal}
(\mr{E}_{\mr{G}}\Pi)^{\Lambda} \simeq
 \begin{cases}
   * & \Lambda = \Lambda_{\uptheta} \text{ for some } \mr{H},\uptheta\\
   \emptyset & \text{ o.w. }
 \end{cases}
\end{equation}
Then
 \[ \uppi: \mr{E}_{\mr{G}}\Pi \longrightarrow  \mr{B}_{\mr{G}}\Pi\] 
 is the universal $\mr{G}$-equivariant principal $\Pi$ bundle.  
The fiber $\uppi^{-1}(b)$ is a $\Pi \times \mr{H}$-space for each point $b \in  (\mr{B}_{\mr{G}}\Pi)^{\mr{H}}$.

 Notice that for each point $z \in \uppi^{-1}(b)$, the isotropy subgroup is precisely $\Lambda_{\uptheta}$ for some 
\begin{equation} \label{eqn:assohomo}
\begin{tikzcd}
\uptheta: \mr{H} \rar &  \Pi.
\end{tikzcd}  
\end{equation}
It is easy to check that the $\Pi$-conjugacy  class  $[\uptheta]$ is independent of the choice of $z$, and therefore, 
  is constant on each of the connected components of $(\mr{B}_{\mr{G}}\Pi)^{\mr{H}}$. Consequently, we get a map 
  \begin{equation} \label{eqn:alpha}
 \begin{tikzcd} 
  \upalpha:\pi_0^{\mr{H}}(\mr{B}_{\mr{G}}\Pi) \rar & \Theta, 
  \end{tikzcd}
\end{equation}
which is a bijection because of \eqref{eq:universal}. 

Let $\mr{B}$ be a connected component of $(\mr{B}_{\mr{G}}\Pi)^{\mr{H}}$, and let $\uptheta \in \upalpha([\mr{B}])$. Using \eqref{eq:universal}, one concludes  that  $(\uppi^{-1}(\mr{B}))^{\Lambda_{\uptheta}}$ is a contractible space with a free action of $\cal{Z}(\uptheta)$. Therefore, $(\uppi^{-1}(\mr{B}))^{\Lambda_{\uptheta}} \longrightarrow \mr{B}$ is a universal $\cal{Z}(\uptheta)$-bundle and $\mr{B} \simeq \mr{B}\cal{Z}(\uptheta)$. 
  \end{proof}
 
 Given a $(\Pi \times \mr{G})$-space  $\mr{F}$, one can construct a $\mr{G}$-equivariant fiber bundle $\uplambda_{\mr{F}}$ whose projection map 
 \[ 
 \begin{tikzcd}
 \uppi_{\uplambda_{\mr{F}}}: \mr{E}_{\mr{G}}\Pi \times_{\Pi} \mr{F} \rar & \mr{B}_{\mr{G}} \Pi
 \end{tikzcd}
 \]
 is induced by the collapsing map from $\mr{F}$ to a point. 

\begin{lem} \label{lem:fiber} If 
$
\begin{tikzcd}
{\sf a}_{\mr{F}} : \Pi \times \mr{G} \rar & \mr{Aut}(\mr{F})
\end{tikzcd}
$
 is the action map on $\mr{F}$, $\mr{H}$ is a subgroup of $\mr{G}$ and $b \in (\mr{B}_{\mr{G}} \Pi)^{\mr{H}}$, then the action of $\mr{H}$ on $\mr{F}' := \uppi_{\uplambda_{\mr{F}}}^{-1}(b) \cong \mr{F}$ is given by the composite 
\begin{equation} \label{eq:modifyaction}
\begin{tikzcd}
{\sf a}_{\mr{F}'}:\mr{H} \ar[rr,"\text{$(\uptheta, \iota)$}"] && \Pi \times \mr{G} \rar["{\sf a}_{\mr{F}}"] & \mr{Aut}(\mr{F}) \rar[cong] & \mr{Aut}(\mr{F}'),
\end{tikzcd}
\end{equation}
where $\iota: \mr{H} \hookrightarrow \mr{G}$ is the inclusion of $\mr{H}$ into $\mr{G}$ and $\uptheta \in \upalpha(\text{\emph{component of $b$}})$, for $\upalpha$ defined in \eqref{eqn:alpha}.
\end{lem}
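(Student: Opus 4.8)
The plan is to trivialize the fiber $\mr{F}'$ over $b$ by choosing a single point of the principal bundle lying above $b$, and then to read off the twisted $\mr{H}$-action by an entirely elementary computation in the mixed-product model of $\uplambda_{\mr{F}}$. First I would fix conventions, writing $\mr{E}_{\mr{G}}\Pi$ with its commuting left $\mr{G}$-action and free right $\Pi$-action, so that $\mr{E}_{\mr{G}}\Pi\times_{\Pi}\mr{F}$ is the set of classes $[e,f]$ subject to $[e\pi,f]=[e,\pi f]$, with $\mr{G}$ acting diagonally by $g\cdot[e,f]=[ge,gf]$ and with $\uppi_{\uplambda_{\mr{F}}}[e,f]=\uppi(e)$. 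All the content of the lemma lives in how the diagonal $\mr{G}$-action interacts with this relation once we restrict to the fiber over $b$.

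Next I would extract the homomorphism $\uptheta$ from a choice of basepoint. Pick $e_0\in\uppi^{-1}(b)$, which is nonempty since $b$ lies in the image of $\uppi$. Because $b$ is $\mr{H}$-fixed and $\uppi$ is $\mr{G}$-equivariant, $h e_0\in\uppi^{-1}(b)$ for every $h\in\mr{H}$, and since the right $\Pi$-action on the fiber $\uppi^{-1}(b)$ is free and transitive there is a unique $\uptheta(h)\in\Pi$ with $h e_0=e_0\,\uptheta(h)$. A one-line check using the commutativity of the $\mr{G}$- and $\Pi$-actions shows $\uptheta\colon\mr{H}\to\Pi$ is a homomorphism, and by the isotropy analysis in the proof of \Cref{thm:fixBGPi} the stabilizer of $e_0$ is exactly $\Lambda_{\uptheta}$, so $[\uptheta]=\upalpha(\text{component of }b)$; thus the choice of $e_0$ is precisely the choice of the representative $\uptheta$ appearing in the statement. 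I would then trivialize the fiber via $\mr{F}\to\mr{F}'=\uppi_{\uplambda_{\mr{F}}}^{-1}(b)$, $f\mapsto[e_0,f]$: this is a homeomorphism, since every class over $b$ has the form $[e_0\pi,f]=[e_0,\pi f]$, and freeness forces $[e_0,f]=[e_0,f']$ to imply $f=f'$. This is the identification $\mr{F}'\cong\mr{F}$ used in the statement.

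Finally I would transport the $\mr{H}$-action through this homeomorphism and simply compute: for $h\in\mr{H}$,
\[
 h\cdot[e_0,f]=[h e_0,\,h f]=[e_0\,\uptheta(h),\,h f]=[e_0,\,\uptheta(h)(h f)]=[e_0,\,{\sf a}_{\mr{F}}(\uptheta(h),h)(f)],
\]
which, under $f\leftrightarrow[e_0,f]$, is exactly the composite \eqref{eq:modifyaction}. The only genuinely delicate point is bookkeeping: checking that $\uptheta$ is a homomorphism rather than an anti-homomorphism and that the $\Pi$- and $\mr{G}$-entries land in the correct slots of ${\sf a}_{\mr{F}}$, both of which hinge on keeping the left/right conventions consistent; I expect no conceptual obstacle beyond this. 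I would close by remarking that a different choice of $e_0$ replaces $\uptheta$ by a $\Pi$-conjugate and the trivialization by a compatible one, so that the isomorphism class of the resulting $\mr{H}$-action on $\mr{F}'$ is independent of the choice, in accordance with $\upalpha$ taking values in $\Pi$-conjugacy classes.
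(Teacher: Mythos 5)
Your proposal is correct and follows essentially the same route as the paper's proof: both fix a point of $\uppi^{-1}(b)$, use it to produce the homomorphism $\uptheta$ and a trivialization $\mr{F}\cong\mr{F}'$, and then compute directly in the balanced product that $h$ acts by ${\sf a}_{\mr{F}}(\uptheta(h),h)$. The only difference is cosmetic — you use the right-$\Pi$-action convention $he_0=e_0\uptheta(h)$ where the paper writes $h\cdot z=\uptheta(h)^{-1}\cdot z$ for a left action — and your added checks (that $\uptheta$ is a homomorphism and that changing $e_0$ conjugates $\uptheta$) are consistent with what the paper delegates to its sketch of \Cref{thm:fixBGPi}.
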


\begin{proof}[\textbf{Proof}] Fix $z \in \uppi^{-1}(b)$, and let $\uptheta$ be the corresponding group homomorphism  as in  \eqref{eqn:assohomo}. It follows from the construction of $\uptheta$ that 
\[ h \cdot z = \uptheta(h)^{-1} \cdot z \]
for all $h \in \mr{H}$. 
Let ${\sf r}$ denote the composite  
\[
\begin{tikzcd}
{\sf r} :\mr{F}' := \uppi_{\uplambda_{\mr{F}}}^{-1}(b) \rar["\cong", "{\sf r}_1"'] &\uppi^{-1}(b) \times_{\Pi} \mr{F}  \rar["\cong", "{\sf r}_2"'] & \mr{F} 
\end{tikzcd}
\] 
and  set the last isomorphism in \eqref{eq:modifyaction} as conjugation by ${\sf r} $. 
Then the result follows from the observation that 
\begin{eqnarray*}
h \cdot x &=& {\sf r}^{-1} (h \cdot {\sf r}( x))\\
 &=& {\sf r}^{-1}({\sf r}_2(h \cdot {\sf r}_1(x)  ))  \\
 &=& {\sf r}^{-1}({\sf r}_2(  h \cdot [z,{\sf r}(x)]  ))  \\
  &=& {\sf r}^{-1}({\sf r}_2( [\uptheta(h)^{-1} \cdot z, h \cdot {\sf r}(x)] ) ) \\
  &=& {\sf r}^{-1}({\sf r}_2([z, \uptheta(h)h \cdot {\sf r}(x)  ])) \\
  &=& {\sf r}^{-1} (\uptheta(h)h \cdot {\sf r}(x) ) 
\end{eqnarray*}
for all $h \in \mr{H}$.
\end{proof}
 
 \bigskip 
\begin{proof}[\textbf{Proof of} \Cref{thm:Vhomogeneous}]
Let  $\mr{F}= \mr{U} \otimes \mr{W}$, where
$\mr{U}$ and $\mr{W}$   are real representations of
$\mr{G}$ and  $\Pi$  respectively.  Let $b \in (\mr{B}_{\mr{G}} \Pi)^{\mr{H}}$, and let 
\[ 
\begin{tikzcd}
\uptheta: \mr{H} \rar & \Pi
\end{tikzcd}
\]
 be a group homomorphism such that $[\uptheta] = \upalpha(\text{component of $b$})$. Then it follows from
\Cref{lem:fiber} that 
\begin{equation} \label{eqn:repovercomp}
\uppi_{\uplambda_{\mr{F}}}^{-1}(b) \cong  \uptheta^*(\mr{W}) \otimes  \Res_{\mr{H}}^{\mr{G}} \mr{V}
\end{equation}
as an $\mr{H}$-representation. 

 The regular representation  $\uprho$ of any compact Lie group group  has the property that 
\[ \uprho \otimes \mr{W} = \bigoplus_{\dim \mr{W}} \uprho  \]
for a finite dimensional representation $\mr{W}$. Therefore, when $\mr{V}$ is a direct sum of regular representations of $\mr{G}$ then $\Res_{\mr{H}}^{\mr{G}} \mr{V}$ is a direct sum of regular representations of $\mr{H}$ and 
$\uppi_{\uplambda_{\mr{F}}}^{-1}(b)$ is independent of the choice of $b$ for
all $\mr{H} \subset \mr{G}$. Hence, the result  (also see \Cref{rmk:geometry}). 
\end{proof}

Notice that  \Cref{thm:Vhomogeneous} produces examples of $\Gmr$-equivariant vector bundles whose dimensions are integral multiples of the regular representation. We will now demonstrate examples of homogeneous bundles whose dimensions are not multiples of $\uprho_\Gmr$.

Let $\PP(\mr{V})$ denote the projectification of the finite $\mr{G}$-representation. Since $\PP(\mr{V})$ is a smooth $\mr{G}$-manifold, we may ask if the tangent bundle $\mr{T}\PP(\mr{V})$ is homogeneous.  
\begin{lem} \label{lem:P(V)}
  Suppose $\mr{V} = n \uprho_{\Gmr}$ for  positive integer $n$.  Then the tangent bundle  of  $\PP(\mr{V})$ is a  homogeneous bundle of dimension $n \uprho_{\mr{G}} -1$. 
\end{lem}
\begin{proof} Suppose $\mr{H}$ is a subgroup of  $\mr{G}$ and $\ell \in\PP(\Vmr)^\Hmr$.  Then the pullback  $\Tmr \PP(\mr{V})|_\ell$ 
\[ 
\begin{tikzcd}
\Tmr \PP(\mr{V})|_\ell \rar[ dashed] \dar[ dashed ] & \Res^\Gmr_\Hmr \Tmr\PP(\Vmr)  \dar \\
\ell \rar &  \Res^\Gmr_\Hmr \PP(\Vmr),
\end{tikzcd}
\]
is an $\Hmr$-representation. By  \Cref{defn:homogeneous} (also see \Cref{rmk:GIhomogeneity}),  it is enough to show that  
\[ \Tmr \PP(\mr{V})|_\ell \cong n \Res^\Gmr_\Hmr \uprho_{\Gmr} -1 = n |\Gmr/\Hmr| \uprho_{\Hmr} -1 \]
regardless of the path component of $\ell$.

The path components of $\mr{H}$-fixed points of $\PP(\Vmr)$ are in one-to-one correspondence with the isomorphism classes of 1-dimensional sub-representations of $\Res^{\Gmr}_\Hmr \Vmr$, i.e., 
\[ 
\pi_0 (\PP(\Vmr)^\Hmr) = \pi_0^{\Hmr} \PP(\Vmr)  \cong \{ \Lmr \subset \Res^{\Gmr}_\Hmr \Vmr: \dim_\RR \Lmr =1  \}/\mr{iso}. 
\]
Further,  the classical argument of Atiyah \cite[Lemma 4.5]{AtiyahTC} generalizes equivariantly to show that  \[ \Res^\Gmr_\Hmr \Tmr \PP(\Vmr) \oplus \upepsilon_1 \cong \Hom (\upgamma_1, \upepsilon_{\mr{V}}),\]
 where $\upgamma_1$ is the tautological line bundle over $\PP(\Vmr)$ whose total space is 
 \[ \mr{Tot}(\upgamma_1):=  \{(\ell, {\sf v}): {\sf v} \in \ell  \} \subset \PP(\mr{V})  \times \mr{V} \]
 as usual, and $\upepsilon_1$ and $\upepsilon_{\Vmr}$ are trivial bundles of dimension $1$ and $\mr{V}$ respectively. 
 
  Therefore, if $\ell \in \PP(\Vmr)^\Hmr$ belongs to the component corresponding to the $1$-dimensional $\mr{H}$-representation $\Lmr$ and  $\mr{V} = n \uprho_{\Gmr}$, then 
 \[ \Tmr \PP(\mr{V})|_\ell +1 = \Lmr^{\ast} \otimes \Res^\Gmr_{\Hmr}\mr{V} = \Lmr^{\ast} \otimes n
   |\Gmr/\Hmr| \uprho_\Hmr = n |\Gmr/\Hmr| \uprho_\Hmr   \in \RO(\Gmr)   \]
 is independent of the choice of $\Lmr$,  where $\Lmr^{*}$ is the dual of $\Lmr$.
\end{proof}
\begin{rmk} \label{rmk:TP(V)} 
    % When the $1$-dimensional sub
    % representations of $\mr{V}$ are all isomorphic (or empty), then
    % $\PP(\mr{V})$ has only one (or none) $\Gmr$-connected component, and
    % therefore any bundle over $\PP(\mr{V})$ is homogeneous.
There are plethora of  examples where $\mr{V}$ is not a sum of regular $\Gmr$-representations and has  more than one isomorphic class of $1$-dimensional sub-representations yet $\Tmr \PP(\Vmr)$  is homogeneous. 
For instance,  when $\Gmr = \mr{C}_4$ and $\upsigma$ is  the sign representation of $\mr{C}_4$, an analysis identical to that  in the proof of \Cref{lem:P(V)}  will show that 
$\Tmr \PP(n(1 \oplus \upsigma) )$  is a homogeneous bundle of dimension
$(n-1)+n \upsigma$ for all positive integers $n$. 
\end{rmk}

\subsection{Relative homogeneity of vector bundles} \label{subsec:relhomogeneity} \ 

The notion of homogeneity can be defined relative to any $\EE_\infty^{\mr{G}}$-ring spectrum $\mr{R}$ using genuine stable $\mr{J}$-homomorphism $\mJJ_{\mr{G}}$ (as in \Cref{rmk:genuineJ}) and $\pic(\R)$, the equivariant Picard spectrum associated to $\R$. The spectrum $\pic(\R)$ is a genuine $\mr{G}$-spectrum defined by that property that its $\mr{H}$-fixed points are the delooping of the $\mr{H}$-$\infty$-groupoid  spanned by invertible $\upiota_{\mr{H}}(\R)$-modules, where $\upiota_{\mr{H}}: \Sp^{\mr{G}} \longrightarrow \Sp^{\mr{H}}$ is the restriction functor. Thus,  
\[ \pic(\R)^{\mr{H}} \simeq \pic(\upiota_{\mr{H}}(\R) )\]
for any  $\EE_\infty^{\mr{G}}$-ring spectrum $\mr{R}$. Unfortunately, a construction of such a $\mr{G}$-equivariant spectrum has not appeared in the literature thus far, hence we sketch one in the next subsection.

For any $\mr{G}$-equivariant vector bundle $\upxi$ over $\mr{B}$, let ${\sf f}_{\upxi, \mr{R}}$ denote  the composite 
\begin{equation} \label{vectrelR}
\begin{tikzcd}
{\sf f}_{\upxi, \mr{R}}: \mr{B} \rar["{\sf f}_{\upxi}"] & \Omega^{\infty}_{\mr{G}} \ko_{\mr{G}} \ar[rr, "\Omega^{\infty}_{\mr{G}}\mJJ_{\mr{G}}"] && \Omega^{\infty}_{\mr{G}} {\pic(\SS_{\mr{G}})} \ar[rr,"\Omega^{\infty}_{\mr{G}}\pic(\iota_{\mr{R}})"] &&\Omega^{\infty}_{\mr{G}} {\pic(\mr{R})},
\end{tikzcd}
\end{equation}
where $\pic(\SS_{\mr{G}})$ and $\pic(\R)$ are the Picard spectrum of
$\SS_{\mr{G}}$ and $\R$ respectively (see \Cref{subsec:equivPic}), and $\iota_{\R}$ is the unit map of $\R$. 
\begin{defn}  \label{defn:relhomogeneous} A $\mr{G}$-equivariant vector bundle  $\upxi$ is  \emph{homogeneous relative to  $\R$}  (abbrev.  \emph{$\R$-homogeneous}) if $\underline{\pi}_0( {\sf f}_{\upxi, \mr{R}})$ factors 
\begin{equation} \label{eqn:factor}
 \begin{tikzcd}
&&& {\sf c}_{\All_{\mr{G}}} \dar["\mr{p}"] \\
\underline{\pi}_0(\mr{B}) \ar[rrru, dashed] \ar[rrr,"\underline{\pi}_0({\sf f}_{\upxi, \mr{R}})"'] &&&\underline{\pi}_0(  \pic(\R))
\end{tikzcd}
\end{equation}
through an $\All_{\mr{G}}$-point $\mr{p}$ of $\underline{\pi}_0 (\pic(\R))$.
 We call $\mr{p}$ a \emph{coordinate of $\R$-homogeneity}, and set 
   $\mr{p}_{\mr{G}}$ as the \emph{relative dimension}  
 \[\dim_\R (\upxi, \mr{p}) :=  \mr{p}_{\mr{G}}
\]
of the pair $(\upxi, \mr{p})$ with respect to $\R$.
 \end{defn}
\begin{rmk} \label{rmk:homVSrelhom} Any homogeneous bundle is automically $\R$-homogeneous for any $\EE_\infty^{\mr{G}}$-ring spectrum $\R$. Conversely,  an $\R$-homogeneous bundle $\upxi$ is homogeneous if and only if  the relative coordinate admits a lift along $\ull{\pi}_0(  \pic(\iota_\R) \circ \mJJ_{\mr{G}} )$
\begin{equation} \label{liftdiag}
\begin{tikzcd}
 & \ull{\pi}_0 (\ko_{\mr{G}}) \ar[d, "\ull{\pi}_0( \pic(\iota_\R) \circ \mJJ_{\mr{G}} )"] \\
 {\sf c}_{\All_{\mr{G}}} \ar[ur, dashed]  \ar[r, "\mr{p}"'] &  \ull{\pi}_0 \pic(\mr{R}). 
\end{tikzcd}
\end{equation}
 However, the map $\ull{\pi}_0\mJJ_{\mr{G}}$ of Mackey functors is not well understood for a general group  $\mr{G}$ (but see \cite{AngelPic, PicFLM} for partial answers). 
\end{rmk}
\begin{rmk} \label{rmk:liftrelhom} Note that the diagram in
    \eqref{liftdiag} automatically admits a lift if the vertical map is injective  for all
    $\mr{G}/\mr{H} \in \cal{O}_\Gmr$. This is the case when $\Gmr = \mr{C}_n$ for
    $1\leq n\leq 4$ or $n=6$, and  $\Rmr= \SS_\Gmr$. 
  This map is not injective when $\mr{G}=\mr{C}_n$ and $\mr{R}=
    \SS_\Gmr$ for $n=5$ or $n \geq 7$ (follows from \cite[Theorem A]{AngelPic}), or when $\Gmr$ is a finite group with
    a cyclic quotient group and $\mr{R}$ is a $\Gmr$-equivariant complex oriented ring spectrum  (see
      \Cref{rmk:complexorient}). In these cases, $\mr{R}$-homogeneity is more
      general than homogeneity.
\end{rmk}

\subsection{Equivariant Picard spectrum and the stable J-homomorphism} \label{subsec:equivPic} \

The Picard group of a symmetric monoidal category $\mathcal{C}$ is defined as the isomophism classes of invertible objects.  When $\mathcal{C}$ is the symmetric monoidal category of $\mr{R}$-modules for an $\EE_\infty$-ring $\mr{R}$, 
Mike Hopkins refined the definition of Picard group  to obtain a Picard spectrum $\pic(\mr{R})$  such that the $\pi_0(\pic(\mr{R}))$ is precisely the Picard group of the homotopy category of $\mr{R}$-modules. Therefore, it is natural to ask for a $\mr{G}$-equivariant Picard spectrum  $\pic(\mr{R})$ associated to  an $\EE_\infty^{\mr{G}}$-ring spectrum  $\mr{R}$.  

The recent paper \cite{HHKWZ} uses the language of parametrized $\infty$-category to refine the definition of the equivariant Picard group (see \cite{PicFLM}) to construct the equivariant Picard spectrum $\pic(\SS_{\mr{G}})$.  
We will therefore briefly review the  foundations of parametrized $\infty$-category necessary  before generalizing  \cite{HHKWZ} to construct a $\mr{G}$-spectrum $\pic(\mr{R})$ for all $\EE_\infty^{\mr{G}}$-ring spectrum $\mr{R}$.

Recall that a $\mr{G}$-$\infty$-category is a coCartesian fibration  \cite{Expose1, Expose2} 
\[ 
\begin{tikzcd}
{\sf p}: \underline{\mathcal{C}} \rar & \mathcal{O}_{\mr{G}}^{\mr{op}},
\end{tikzcd}
\] 
and models genuine $\mr{G}$-objects. A $\mr{G}$-symmetric monoidal $\mr{G}$-$\infty$-category is a coCartesian fibration over $\FG$ (see \cite[$\mathsection$2]{HHKWZ} for a definition)
\[ 
\begin{tikzcd}
{\sf p}: \underline{\mathcal{M}}^{\otimes} \rar & \FG
\end{tikzcd} 
\]
satisfying the equivariant Segal condition (in \cite[Definition 2.1.8]{HHKWZ}).   The maximal $\infty$-groupoid spanned by invertible objects within the fiber  $\ull{\cal{M}}_{[\mr{G}/\mr{H}]}$ (over
 $[\mr{G}/\mr{H}] \in \FG$)  assemble to form a
 $\mr{G}$-$\infty$-groupoid $ \ull{\mr{Pic}}(\ull{\cal{M}})$ called the Picard
 groupoid of $\ull{\cal{M}}$. It inherits a $\mr{G}$-symmetric monoidal
 structure from $\ull{\cal{M}}^{\otimes}$, and therefore,
 $\ull{\mr{Pic}}(\ull{\cal{M}})$  models an $\EE_\infty^{\mr{G}}$-space (see
 \cite[$\mathsection$5.1, Example A.4.3]{HHKWZ}) which
 deloops to a genuine $\mr{G}$-spectrum using the functor $\mr{B}$ in \cite[A.4.1]{Expose4} (which is a version  of an equivariant delooping machine originally introduced in \cite{GMequivK}).

 Thus, to complete the construction of $\pic(\mr{R})$,  it remains to show:
   \begin{prop}
     There exists a $\mr{G}$-symmetric monoidal $\mr{G}$-$\infty$-category of
     $\Rmr$-modules.% denoted  $\underline{\mathcal{M}\mathrm{od}}^{\otimes}_\Rmr$.
   \end{prop}
\begin{proof}
Firstly, there is a functor from $\cal{O}_\mr{G}^{\mr{op}}$  to the category of
simplicial symmetric
monoidal categories that sends $\mr{G}/\mr{H}$ to the category of
$\upiota_{\mr{H}}\mr{R}$-modules.
 Moreover, for subgroups $\mr{K} \subset \mr{H} \subset \mr{G}$, there is a relative norm functor $\Nmr_\Kmr^{\Hmr,\upiota_\mr{K} \mr{R}}$ from $\upiota_{\mr{K}} \mr{R}$-modules to $\upiota_{\mr{H}} \mr{R}$-modules
 sending $\mr{M}$ to $\upiota_{\mr{H}} \mr{R} \sma_{\Nmr_\Kmr^\Hmr \upiota_{\mr{K}} \mr{R}} \Nmr_\Kmr^\Hmr (\mr{M})$.
These can be assembled into a functor from $\FG$ to simplicial categories and
then the Grothendieck construction will turn it into a simplicial category
$\underline{\mr{Mod}}_\Rmr^{\otimes}$ over $\FG$. By contruction, $\underline{\mr{Mod}}_\Rmr^{\otimes}$ is a fully split simplicial
Grothendieck op-fibration (see \cite[Sec 2.1]{Bonventre}), and of Segal type.
The operadic nerve of $\underline{\mr{Mod}}_\Rmr$ is then a $\Gmr$-symmetric
monoidal $\Gmr$-$\infty$-category by \cite[Prop 5.34]{Bonventre}.  
\end{proof}

 \begin{rmk}[Genuine stable $\mr{J}$-homomorphism]  \label{rmk:genuineJ} 
Let $\cal{R}\mr{ep}$ denote the  symmetric monoidal topological category  whose objects are $\mathbb{R}^n$ and morphisms are $\bigsqcup_{n \in \mathbb{N}} \mr{Gl}_n(\mathbb{R})$. One can consider ``topological $\mr{G}$-objects'' in $\cal{R}\mr{ep}$  \cite[Construction 2.2.3]{HHKWZ} and obtain a $\mr{G}$-symmetric monoidal $\mr{G}$-$\infty$-category
\[ 
\begin{tikzcd}
\ull{\cal{R}\mr{ep}}^{\otimes} \rar &  \ull{\mathrm{Fin}}_{*}^{\mr{G}}
\end{tikzcd}
\]
which deloops to  $\ko_{\mr{G}}$. Explicitly, the fiber $\infty$-category
$\ull{\cal{R}\mr{ep}}_{[\mr{G/H}]}$ is equivalent to the nerve of
the category of finite dimensional $\mr{H}$-representations and $\mr{H}$-equivariant
linear isomorphisms. 
By sending an $\mr{H}$-representation $\mr{V}$ to $\Sigma^{\infty}_{\mr{H}}\mr{S}^{\mr{V}}$ via one-point compactification, we obtain the $\mr{G}$-equivariant $\mr{J}$-homomorphism $\mJJ_{\mr{G}}$ (also see \cite[$\mathsection$1]{McClureJG}).
\end{rmk}

\subsection{Orientation  and Thom isomorphism} \label{subsec:orient-thom} \ 

The classifying map of a  $\mr{G}$-equivariant vector bundle $\upxi$ can be regarded as a map of $\mr{G}$-$\infty$-groupoids
\[ 
\begin{tikzcd}
\mr{F}_{\upxi}:\mr{B} \rar & \ull{\cal{R}\mr{ep}}_{\mr{G}},
\end{tikzcd}
\] 
where $\mr{B}$ is the  fundamental $\mr{G}$-$\infty$-groupoid of the base space of $\upxi$. In fact,  ${\sf f}_{\upxi}$  (as in  \Cref{notn:classify}) is  the homotopy class of $\mr{F}_{\upxi}$. Likewise, the map ${\sf f}_{\upxi, \R}$ defined in \eqref{vectrelR} can also be constructed as a map  of $\mr{G}$-$\infty$-groupoids 
\[ 
\begin{tikzcd}
\mr{F}_{\upxi, \mr{R}}: \mr{B} \rar & \ull{\cal{R}\mr{ep}}_{\mr{G}}  \rar &
\ull{\mr{Pic}}(\ull\Sp^{\mr{G}}) \rar & \ull{\mr{Pic}}(\ull{\mathcal{M}\mr{od}}_\mathrm{R}).
\end{tikzcd}
\]

\begin{defn} \label{defn:orientation}
 An \emph{$\R$-orientation} of  a $\mr{G}$-equivariant vector bundle $\upxi$  is a homotopy between the composite $\mr{F}_{\upxi, \mr{R}}$ and 
 a constant map. 
\end{defn} 
\begin{rmk}[A comparison with the nonequivariant orientation theory]
Note that when $\Gmr$ is the trivial group then the  dimension of the
  vector bundle $\upxi$ determines the co-ordinate of homogeneity. Thus by
  choosing the base point of  $\Omega^\infty \pic(\Rmr)$ in the component corresponding
  to $\dim \upxi$, the map ${\sf f}_{\upxi, \Rmr}$ (which is the completion of
  $\mr{F}_{\upxi, \Rmr}$)  lifts to $\Omega^{\infty}\bgl(\Rmr)$,  the $0$-connected cover  of $\Omega^\infty \pic(\Rmr)$ (according to the notations of  \Cref{rem:bgl}). From hereon it is straightforward to see that our definition of equivariant orientation, i.e. \Cref{defn:orientation}, when restricted to the trivial group coincides with the nonequivariant orientation theory of  May-Quinn-Ray-Tornehave \cite{MQRT} (also see \cite{ABGHR}).
\end{rmk}

The  $\mr{R}$-Thom spectrum  of $\upxi$ is defined as the colimit
 \begin{equation} \label{eqn:RThom}
 \mr{Th}(\upxi, \mr{R}) := \Gcolim_{ \ \ \mr{B}} \ \mr{F}_{\upxi, \mr{R}} 
 \end{equation}
 as in \cite[$\mathsection$9]{Expose2}. We  abbreviate  $\mr{Th}(\upxi) := \mr{Th} (\upxi, \SS_{\mr{G}})$.
The commuting diagram of $\mr{G}$-$\infty$-categories
\[
\begin{tikzcd}
\ull{\mr{Pic}}(\ull\Sp^{\mr{G}}) \rar[] \dar[hook] & \ull{\mr{Pic}}( \ull{\mathcal{M}\mr{od}}_\mr{R}) \dar[hook] \\
\ull{\Sp}^{\mr{G}} \rar[, "-\sma \R"'] &  \ull{\mathcal{M}\mr{od}}_\mr{R}
\end{tikzcd}
 \]
implies there is a map from $\mr{Th}(\xi, \mr{R})$  to $\mr{Th}(\upxi) \sma \mr{R}$. In fact, this map is a weak equivalence as the smash product commutes with colimits.

%\begin{notn}  
%We  abbreviate  $\mr{Th}(\upxi) := \mr{Th} (\upxi, \SS_{\mr{G}})$. 
%\end{notn}

\bigskip
\begin{proof}[\textbf{Proof of} \Cref{main1} ] Let $\cal{I}$ denote the dimension of $\upxi$ relative to $\R$.  The proof of  $\textit{(1)} \Leftrightarrow \textit{(2)}$ is essentially a tautology.  

The proof of $\textit{(2)} \Leftrightarrow \textit{(3)}$ is classical: 
An $\R$-Thom class ${\sf u}_{\upxi} $ leads to an $\R$-Thom isomorphism
\[ 
\begin{tikzcd}
\omega: \mr{Th}(\upxi) \sma \R \ar[rr, "\Delta \sma 1_{\R}"] && \mr{Th}(\upxi) \sma \mr{B}_+ \sma \R \rar["(2)"] & \cal{I}  \sma \mr{B}_+ \sma \R \rar["(3)"] & \mr{B}_+ \sma \cal{I} ,  
\end{tikzcd}
\]
where $\Delta$ is the Thom diagonal map, the map $(2)$ is induced by  ${\sf u}_{\upxi}$, and the map $(3)$ is induced by the $\mr{R}$-module structure of $\cal{I}$. Conversely, an $\R$-Thom isomorphism  $\omega$ leads to an $\R$-Thom-class 
\[ 
\begin{tikzcd}
{\sf u}_{\upxi}:\mr{Th}(\upxi)  \ar[r, "(4)"] & \mr{Th}(\upxi) \sma   \R \rar["\omega"] &  \mr{B}_+ \sma \cal{I}  \rar["(6)"] & \cal{I}, 
\end{tikzcd}
\]
where the map $(4)$ is induced by the unit of $\R$, and the map $(6)$ collapses $\mr{B}$ to a point. 
\end{proof}
\begin{rmk} \label{rmk:OimplyH} The existence of an $\R$-orientation of a $\mr{G}$-equivariant bundle $\upxi$
  automatically implies $\R$-homogeneity. If ${\sf c}$ denote a constant map equivalent to $\mr{F}_{\upxi, \R}$, 
then 
\[ 
\begin{tikzcd}
\ull{\pi}_0({\sf c}): \ull{\pi}_0(\mr{B})  \rar & \underline{\pi}_0( \Omega^{\infty}_{\mr{G}} \pic(\R))
\end{tikzcd}
\] 
serves as a coordinate of $\R$-homogeneity of $\upxi$.
\end{rmk}
\begin{ex} For any finite group $\Gmr$,
     the $\mr{G}$-equivariant complex tautological line bundle \[ 
\upgamma_1^\CC := \begin{tikzcd}
\mr{E}_{\Gmr}\mr{S}^1 \times_{\mr{S}^1} \upphi_1 \dar \\
\mr{B}_{\Gmr}\Smr^1
\end{tikzcd}
\]
admits an $\mr{R}$-Thom class for any complex orientable $\EE_\infty^{\Gmr}$-ring $\Rmr$  (see \cite[Definition 9.5]{GreenleesFGL}). Here
  $\upphi_1$ is the $1$-dimensional complex representation of $\Smr^1$ induced
  by its  inclusion in $\CC^\times$. Note that $\upgamma_1^\CC$  is
    not homogeneous when $\Gmr$ has more than one isomorphic classes of
  $1$-dimensional complex representation, but it is $\mr{R}$-homogeneous. In fact, the $\mr{R}$-homogeneity is forced by
  the existence of a $\Rmr$-Thom class as explain in \Cref{rmk:OimplyH}. Examples  of $\Gmr$-equivariant complex oriented ring spectrum include the $\Gmr$-equivariant periodic complex $\mr{K}$-theory  $\KU_{\Gmr}$, the universal complex oriented theory $\mr{MU}_\Gmr$, among others (see \cite{GreenleesFGL}). 
\end{ex}
\begin{rmk}(Equivariant complex orientations and
    homogeneity)  \label{rmk:complexorient}  According to 
    \cite[Definition 1.2]{Okonek}, an equivariant cohomology theory ``has Thom classes'' 
    if   each $\Gmr$-equivariant complex vector bundle can be assigned an
    $\mr{R}$-Thom class which is compatible across cartesian products and pullbacks. Equivariant cohomology theories with Thom classes are closely related to equivariant complex oriented cohomology theories as explained in \cite[$\mathsection$6]{CGK}.  Suppose $\Rmr$ is an $\EE_\infty^{\Gmr}$-ring spectrum representing a cohomology theory which has Thom classes. Then by restricting 
    the $\Rmr$-Thom isomorphism of the universal ${\sf n}$-plane complex vector bundle
    over $\mr{B}_\Gmr \Umr({\sf n})$  to various fixed points, we get
     that for any ${\sf n}$-dimensional complex $\Gmr$-representation $\sf r$,
      there is an equivalence
\begin{equation*}
\Sigma^{\sf r} \Rmr \simeq \Sigma^{2{\sf n}} \Rmr,
\end{equation*}
and consequently, the map 
\[
\begin{tikzcd}
 \pi_0^{\Gmr}(\ku_\Gmr) \rar[hook] &  \pi_0^{\Gmr}(\ko_\Gmr) \ar[rr, "\pi_0^\Gmr\mJJ_{\mr{G}}"] && \pi_0^{\Gmr} \pic(\SS_\Gmr) \rar& \pi_0^{\Gmr} \pic(\Rmr)
\end{tikzcd}
\]
contains ${\sf r} -  \dim_{\mathbb{C}} {\sf r}   \in \mr{RU}(\Gmr)$ in the kernel.
   % This is related to the fact that the orientation class of any complex  $\Gmr$-representation is invertible in
   %   $\pi_{\star}^{\Gmr}\Rmr$. 
     %Important examples  of $\Gmr$-equivariant complex oriented ring spectrum include the $\Gmr$-equivariant periodic complex $\mr{K}$-theory  $\KU_{\Gmr}$, the universal complex oriented theory $\mr{MU}_\Gmr$, among others (see \cite{GreenleesFGL}). However,  $\mr{H}\ull{\FF}_p$  at any given prime $p$  is not complex oriented unless the order of the group is coprime to $p$. 
\end{rmk}

In the early nineties, Costenoble-Waner \cite{CW} obtained an equivariant Thom isomorphism  theorem for equivariant bundles after twisting the ordinary cohomology (with Burnside coefficients) using the fundamental groupoid $\Pi \mr{B}$ of the base space $\mr{B}$ of the given bundle. Consequently, the underlying cohomology theory is graded using the representation ring $\mr{RO}(\Pi \mr{B})$. In some sense, the work of Costenoble-Waner is an extension of the classical Thom isomorphism theorem  \cite[Lemma 2.1]{AtiyahTC} which involves local coefficients\footnote{We thank  Agnes Beaudry for explaining this perspective.}. The main advantage of this theory is there are no requirements on the bundle (such as homogeneity)  to obtain a Thom class or a Thom isomorphism. This is also a disadvantage as the theory cannot discern oriented bundles from those that are not oriented geometrically (as per \cite[Definition 7.9]{CMW}). This led May \cite{MayOrient} to propose an alternative theory where an orientation of an equivariant bundle  is represented by a family of  coherent Thom classes, however having multiple Thom classes is not ideal for  computational purposes.

In our theory, we take advantage of the notion of homogeneity, so that our definition of orientation agrees with the geometric interpretation of equivariant orientation \cite{MayOrient} (after setting $\mr{R} = \mr{H} \ull{\Acal}_{\Gmr}$) at the same time represented by a single Thom class even when the base space is disconnected. This leads us to new applications which are discussed in the next section.

\section{The first equivariant Stiefel--Whitney class} \label{sec:SW}
For an $\R$-homogeneous bundle $\upxi$ of relative dimension $\cal{I}$, the corresponding relative dimension zero virtual bundle  is classified by the map
\[ 
\begin{tikzcd}
{\sf f}_{\upxi, \R} - {\sf c}_{\mathcal{I}}:  \mr{B} \rar & \Omega^{\infty}_{\mr{G}}{\pic(\R)},
\end{tikzcd}
\] 
where  ${\sf c}_{\cal{I}}$ is the constant map at $\cal{I}$.  By construction, the adjoint of this map admits a lift
\begin{equation} \label{dimzero}
\begin{tikzcd}
&& \bgl(\R) \dar \\
\Sigma^{\infty}_{\mr{G}} \mr{B}_+ \ar[rr, "{\sf f}_{\upxi, \R} - {\sf c}_{\mathcal{I}}"']   \ar[urr, dashed, "{\sf f}^{(0)}_{\upxi, \R}"] && \text{$\pic(\R)$},
\end{tikzcd}
\end{equation}
where $\bgl(\R)$ is the $0$-connected cover of $\pic(\R)$ (see \Cref{rem:bgl}).  

Notice that when $\Gmr$ is the trivial group and $\mr{R} = \Hmr\ZZ$ then $\bgl(\Rmr) \simeq \Sigma \Hmr \FF_2$, and the homotopy class
\[ [{\sf f}_{\upxi, \Hmr\ZZ}] \in \mr{H^1}(\mr{B}; \FF_2) \] 
is precisely the first Stiefel-Whitney class of $\upxi$. Therefore, we make the following definition.
\begin{defn} \label{defn:SW} For an $\R$-homogeneous $\mr{G}$-equivariant bundle $\upxi$  define its  \emph{first  Steifel Whitney class with respect to $\R$}  as the homotopy class
\[ {\sf w}_1^{\R}(\upxi):= [{\sf f}^{(0)}_{\upxi, \R}] \in
  [\Sigma^{\infty}_{\mr{G}} \mr{B}_+, \bgl(\R)]^{\mr{G}}. \] 
\end{defn}
\bigskip
\begin{rmk} \label{rem:bgl} Let $\ull{\mr{BGL}}_1( \mr{R}\text{-}\mr{Mod})$ denote
  the sub $\mr{G}$-$\infty$-groupoid of  $\ull{\mr{Pic}}( \mr{R}\text{-}\mr{Mod})$
  spanned by $\R$.
  Then, $\ull{\mr{BGL}}_1( \mr{R}\text{-}\mr{Mod})$ is $\mr{G}$-symmetric monoidal and deloops to a genuine $\mr{G}$-equivariant spectrum, which we call $\bgl(\R)$. It is easy to see that $\bgl(\R)$ is the fiber of the zeroth Postnikov approximation map 
\[ 
\begin{tikzcd}
{\pic(\R)} \rar & \mr{H} \ull{\pi}_0 \pic(\R),
\end{tikzcd}
\]
and  hence, it is the $0$-connected cover of $\pic(\R)$. 
\end{rmk}
\begin{rmk} \label{rmk:GL1}
Emulating the classical work of \cite{MQRT} one may define the $\mathcal{O}_{\mr{G}}$-space $\ull{\mr{GL}}_1(\mr{R})$ as the pullback 
\[ 
\begin{tikzcd}
\ull{\mr{GL}}_1(\R) \dar[ dashed] \rar[ dashed] & \Omega^{\infty}_{\mr{G}} \R \dar[] \\
\ull{\pi}_0(\R)^{\times} \rar[] & \ull{\pi}_0(\R).
\end{tikzcd}
\]
For an $\EE_\infty^{\mr{G}}$-ring spectrum $\R$,  $\ull{\mr{GL}}_1(\R)$ is
  equivalent to the fixed points of an $\EE_\infty^{\mr{G}}$-space  and   deloops to
  a genuine $\mr{G}$-spectrum $\gl(\R)$ (details to appear in
  \cite{KMZpresheaf})\footnote{A construction of the unit space in the context of equivariant Segal spaces appears in \cite{Rekha}.}.
It can be shown that \[ \bgl(\R) \simeq \Sigma \gl(\R), \]
and  therefore,  
\[ 
\ull{\pi}_{\sf t} (\bgl(\R)) \cong \left \lbrace 
\begin{array}{lll}
\ull{\pi}_0(\R)^{\times} & \text{if  ${\sf t} =1$,  } \\
\ull{\pi}_{{\sf t} -1}(\R) & \text{if ${\sf t} \geq 2 $, and} \\
0 & \text{otherwise,}
\end{array}
\right. 
\]
where ${\sf t} \in \ZZ$.

\end{rmk}
\begin{rmk}[Uniqueness of ${\sf w}_1^{\R}(-)$] \label{rmk:uniquew1}
The homotopy class of the map ${\sf f}^{(0)}_{\upxi, \R}$ is well-defined as the indeterminacies lie in $\mr{H}^{-1}(\mr{B}; \ull{\pi}_0\pic(\R))$  (see \Cref{rem:bgl}), which is a trivial group for any $\mr{G}$-space $\mr{B}$. 
\end{rmk}

\bigskip
\begin{proof}[\textbf{Proof of} \Cref{thm:w1orient}] \label{proof:w1orient}
By definition,  ${\sf w}_1^{\R}(\upxi)$ is the obstruction to $\R$-orientability of an $\R$-homogeneous bundle $\upxi$. 
\end{proof}   
\subsection{  $\mr{H}\ull{\ZZ}$-orientability of homogenous bundles } \ 

It follows from \Cref{rmk:GL1} that $\bgl(\R) \simeq \Sigma\mr{H} \cal{T}^{\times}$ when $\R = \mr{H}\mathcal{T}$ for any Tambara functor $\mathcal{T}$. In particular, 
when $\R = \mr{H}\ull{\ZZ}$ then $\bgl(\R) \simeq \Sigma\mr{H} \ull{\FF}_2$. Therefore, the Stiefel--Whitney class ${\sf w}_1^{\ull{\ZZ}}(\upxi)$ for an $\mr{H}\ull{\ZZ}$-homogeneous bundle $\upxi$ is an element in
\[
{\sf w}_1^{\ull{\ZZ}}(\upxi) \in \mr{H}^1(\mr{B}; \ull{\FF}_2),
 \]
where $\mr{B}$ is the base space of $\upxi$.  Thus, when $\mr{H}^1(\mr{B}; \ull{\FF}_2) = 0$, $\upxi$ is automatically $\mr{H}\ull{\ZZ}$-oriented.  

\bigskip
\begin{proof}[\textbf{Proof of} \Cref{thm:HZtwofold}]  For an $\mr{H}\ull{\ZZ}$-homogeneous vector bundle $\upxi$,  the additivity formula  \eqref{eqn:additivity} implies
\[ {\sf w}_1^{\ull{\ZZ}}( \upxi \oplus \upxi) = {\sf w}_1^{\ull{\ZZ}}( \upxi) + {\sf w}_1^{\ull{\ZZ}}( \upxi) = 2 {\sf w}_1^{\ull{\ZZ}}( \upxi) = 0    \]
as $\mr{H}^1(\mr{B}; \ull{\FF}_2)$ is an $\FF_2$-vector space. Thus, by \Cref{thm:w1orient}, $2$-fold direct sum  of any $\mr{H}\ull{\ZZ}$-homogeneous vector bundle  is automatically $\mr{H}\ull{\ZZ}$-orientable. 
\end{proof}

\subsection{  $\mr{H}\ull{\cal{A}}_{\mr{G}}$-orientability of homogenous bundles
}

For a finite group $\mr{G}$,  all nontrivial  elements in
  ${\cal{A}}_{\mr{G}}^{\times}$  have order $2$ \cite{Matsuda} (also see \Cref{thm:Matsuda}). Therefore, $\mr{H}^\star_{\mr{G}}(\mr{B}; \ull{\cal{A}}_{\mr{G}}^{\times})$ is a graded $\mathbb{F}_2$-vector space. 

\bigskip
\begin{proof}[\textbf{Proof of} \Cref{thm:HAtwofold}] Let $\upxi$ be an $\mr{H}\cal{A}_{\Gmr}$-homogeneous bundle. Then the additivity formula \eqref{eqn:additivity}, implies  
\[ {\sf w}_1^{\ull{\cal{A}}_{\mr{G}}}( \upxi \oplus \upxi) = {\sf w}_1^{\ull{\cal{A}}_{\mr{G}}}( \upxi) + {\sf w}_1^{\ull{\cal{A}}_{\mr{G}}}( \upxi) = 2 {\sf w}_1^{\ull{\cal{A}}_{\mr{G}}}( \upxi) = 0.   \]
  Thus, by \Cref{thm:w1orient}, $2$-fold direct sum of any  $\upxi$ is $\mr{H}\cal{A}_{\Gmr}$-orientable. 
 \end{proof}

\bigskip 
\begin{rmk} \label{rmk:2foldT}A Tambara functor $\mathcal{T}$ is an
  $\ull{\cal{A}}_{\mr{G}}$-algebra. Therefore, one may wonder  if \Cref{thm:HAtwofold} implies $2$-fold direct sum of an $\mr{H}\mathcal{T}$-homogeneous
  bundle is always $\mr{H}\mathcal{T}$-orientable. While the statement is true when $\mr{G}$ is trivial, it does not generalize to nontrivial groups  because an $\mr{H}\cal{T}$-homogeneous bundle need not be $\mr{H}\ull{\cal{A}}_{\mr{G}}$-homogeneous. However, the authors are not aware of any counterexample. 
\end{rmk}
\bigskip
\begin{proof}[\textbf{Proof of} \Cref{thm:AorientGodd}]
The natural map $ 
\upiota: \ull{\cal{A}}_{\mr{G}} \longrightarrow \ull{\ZZ} 
$ of Tambara functors 
induces a map of Mackey functors   on the units
\begin{equation} \label{AGtoZunit}
\begin{tikzcd}
\upiota^{\times}: \ull{\cal{A}}_{\mr{G}}^{\times} \rar & \ull{\ZZ}^{\times} \cong \ull{\FF}_2
\end{tikzcd}
\end{equation}
which is an isomorphism when $|\mr{G}|$ is odd (see \Cref{thm:Matsuda}). Consequently,  an $\ull{\cal{A}}_{\mr{G}}$-homogeneous bundle is $\ull{\cal{A}}_{\mr{G}}$-orientable if and only if  it is $\mr{H}\ull{\ZZ}$-orientable.  
\end{proof}

\bigskip
When  $|\mr{G}|$ is even,  \eqref{AGtoZunit} is not an isomorphism, and  therefore, 
\[ \ull{\cal{A}}_{\mr{G}}^{\circ} := \ker(\upiota^{\times})\]
is a nontrivial Mackey functor (see \Cref{Appendix:unit}). It follows that: 
\begin{prop}
 The fiber  of  the map 
\[ 
\begin{tikzcd}
\bgl(\upiota): \Sigma \mr{H } \ull{\cal{A}}_{\mr{G}}^{\times} \simeq \bgl(\mr{H}\ull{\cal{A}}_{\mr{G}}) \ar[rr] && \bgl( \mr{H}\ull{\ZZ}) \simeq \Sigma\mr{H} \ull{\FF}_2
\end{tikzcd}
\]
is equivalent to $\Sigma\mr{H}\ull{\cal{A}}_{\mr{G}}^{\circ}$.
\end{prop}
\begin{proof} Since $\upiota^{\times}$ in \eqref{AGtoZunit}  is a surjection,  the long exact sequence
\[ 
\begin{tikzcd}
  \dots   \rar[] &  \ull{\pi}_{{\sf t}+1} \bgl( \mr{H}\ull{\ZZ})   \rar[]  & \ull{\pi}_{{\sf t} +1}(\bgl( \mr{H}\ull{\ZZ})  )  \ar[dll, out= east, in=west, looseness=1.8, overlay]\\
 \ull{\pi}_{\sf t} (\mr{Fib}(\bgl(\upiota)))\rar[] & \ull{\pi}_{\sf t} \bgl( \mr{H}\ull{\ZZ}) \rar[]  &\dots 
\end{tikzcd}
\]  
of Mackey functors implies
\[ 
\ull{\pi}_{\sf t} (\mr{Fib}(\bgl(\upiota))) \cong \left \lbrace 
\begin{array}{lll}
\ull{\cal{A}}_{\mr{G}}^{\circ} & \text{if  ${\sf t} =1$, and  } \\
0 & \text{otherwise,}
\end{array}
\right. 
\]
where ${\sf t} \in \ZZ$. Thus,  $\mr{Fib}(\bgl(\upiota)) \simeq \Sigma\mr{H} \ull{\cal{A}}_{\mr{G}}^{\circ}$. 
\end{proof}
When an $\mr{H}\ull{\cal{A}}_{\mr{G}}$-homogeneous  $\mr{G}$-equivariant
vector bundle $\upxi$  is $\mr{H}\ull{\mathbb{Z}}$-orientable,  there exists a lift in the diagram
\begin{equation} \label{fiberg}
\begin{tikzcd}
& \Sigma^{\infty}_{\mr{G}}\mr{B}_+ \ar[dl, dashed, bend right ,"{\sf f}^{{\sf g}}_{\upxi, \mr{H} \ull{\cal{A}}_{\mr{G}} } "' ] 
\ar[d, "{\sf f}^{(0)}_{\upxi, \mr{H} \ull{\cal{A}}_{\mr{G}} } "']
\ar[drr, "{\sf f}^{(0)}_{\upxi, \mr{H} \ull{\ZZ} }"]  \\ 
 \Sigma\mr{H} \ull{\cal{A}}_{\mr{G}}^{\circ} \rar & \bgl(\mr{H} \ull{\cal{A}}_{\mr{G}}) \ar[rr]  && \bgl(\mr{H} \ull{\ZZ}) 
\end{tikzcd}
\end{equation}
because ${\sf f}^{(0)}_{\upxi, \mr{H} \ull{\ZZ} } \simeq 0$. This lift  may not be unique as its homotopy class can be varied by elements in the image of \[ 
\begin{tikzcd}
\updelta_*: \mr{H}^0(\mr{B}; \ull{\mathbb{F}}_2) \rar & \mr{H}^1(\mr{B}; \ull{\cal{A}}_{\mr{G}}^{\circ}),
\end{tikzcd}
\] 
where $\updelta:\mr{H}\ull{\mathbb{F}}_2 \longrightarrow \Sigma\mr{H}\ull{\cal{A}}_{\mr{G}}^{\circ}$ is the connecting map of the  fiber sequence in \eqref{fiberg} (the bottom horizontal row). In other words, the homotopy classes of all possible ${\sf f}^{{\sf g}}_{\upxi, \mr{H} \ull{\cal{A}}_{\mr{G}} }$ form a coset of the subgroup $\mr{D} = {\sf im}(\updelta_*)$ within $\mr{H}^1(\mr{B}; \ull{\cal{A}}_{\mr{G}}^{\circ})$. 

\begin{defn} \label{defn:ghosts} Let $\upxi$ be a $\mr{H}\ull{\cal{A}}_{\mr{G}}$-homogeneous  $\mr{G}$-equivariant vector bundle. Then a \emph{ghost} of the first Stiefel--Whitney class ${\sf w}_1^{\ull{\mathbb{Z}}}(\upxi)$ is the homotopy class of any map ${\sf f}^{{\sf g}}_{\upxi, \mr{H} \ull{\cal{A}}_{\mr{G}} }$ that fits in the diagram \eqref{fiberg}. Let 
\[ \mathfrak{G}_1(\upxi) := \{ [{\sf f}^{{\sf g}}_{\upxi, \mr{H} \ull{\cal{A}}_{\mr{G}} }] \in \mr{H}^1(\mr{B}; \ull{\cal{A}}_{\mr{G}}^{\circ}) : \text{diagram  \eqref{fiberg} is homotopy commutative}\} \]
 denote the $\mr{D}$-coset consisting of all possible ghosts of  the first Stiefel--Whitney class. 
\end{defn}

\bigskip
\begin{proof}[\textbf{Proof of} \Cref{thm:ghost}] It follows  from
  \eqref{fiberg} and  the uniqueness of the first Steifel-Whitney class (see \Cref{rmk:uniquew1}) that 
 \[ {\sf w}_1^{\ull{\cal{A}}_{\mr{G}} }(\upxi) := [ {\sf f}^{(0)}_{\upxi, \mr{H} \ull{\cal{A}}_{\mr{G}} }]= 0\] 
 whenever $0 \in \mathfrak{G}_1(\upxi)$.
\end{proof}

\bigskip
Next we will show that $0 \in \mathfrak{G}_1(\upxi)$ if $\mr{G}$ acts freely on the base space $\mr{B} $ of $\upxi$. 

\bigskip
\begin{proof}[\textbf{Proof of} \Cref{freeVSghost}]
Let $\upxi$ denote an $\mr{H}\ull{\mathcal{A}}_{\mr{G}}$-homogeneous bundle and let $\mr{B}$ denote the base space of $\upxi$. Since there is an $\EE_\infty^{\mr{G}}$-ring map 
\[ 
\begin{tikzcd}
\mr{H} \upiota \colon \mr{H} \ull{\mathcal{A}}_{\mr{G}} \ar[rr] && \mr{H} \ull{\mathbb{Z}}, 
\end{tikzcd}
\]
 an $\mr{H}\ull{\mathcal{A}}_{\mr{G}}$-orientation leads to an $\mr{H} \ull{\mathbb{Z}}$-orientation. We will now show that the converse is true when $\mr{G}$ acts freely on $\mr{B}$.

 As $\mr{B}$ has a free action,  we get a principal  $\mr{G}$-bundle
 $ 
 \mr{p} :\mr{B} \longrightarrow \mr{B}/\mr{G}
 $
 whose classifying map is 
 \[ 
 \begin{tikzcd}
 f:\mr{B} \rar &  \mathrm{EG}
 \end{tikzcd}
 \]
 on the total space. Notice that the composite 
 \[ 
 \begin{tikzcd}
 \mr{B} \rar["\Delta"] & \mr{B} \times \mr{B} \rar["1_{\mr{B}} \times f"] & \mr{B} \times \mr{EG} 
 \end{tikzcd}
 \]
 is a $\mr{G}$-equivalence. Therefore, 
 \begin{eqnarray*}
 \mr{H}^n_{\mr{G}}(\mr{B}; \ull{\cal{A}}_{\mr{G}}^{\circ})
   &\cong&
[\Sigma^{\infty}_{\mr{G}} \mr{B}_+, \Sigma^n\mr{H}\ull{\cal{A}}_{\mr{G}}^{\circ}]^{\mr{G}} \\
 &\cong&
[\Sigma^{\infty}_{\mr{G}}(\mr{B} \times \mathrm{EG})_+, \Sigma^n\mr{H}\ull{\cal{A}}_{\mr{G}}^{\circ}]^{\mr{G}} \\
& \cong&  [\Sigma^{\infty}_{\mr{G}}\mr{B}_+, \Sigma^n\mathrm{F}(\mathrm{EG}_+, \mr{H}\ull{\cal{A}}_{\mr{G}}^{\circ})]^{\mr{G}}
\end{eqnarray*}
for any natural number $n \in \mathbb{N}$. By setting  $\cal{F} = \{ 1\}$ and  $\mr{D} = \mr{EG}$ in \cite[II.2.2]{LMMS}, we conclude that the map
\[ 
\begin{tikzcd}
\mathrm{F}(\mathrm{EG}_+, \mr{H}\ull{\cal{A}}_{\mr{G}}) \ar[rr] & &
\mathrm{F}(\mathrm{EG}_+, \mr{H}\ull{\mathbb{Z}})
\end{tikzcd}
 \]
induced by $\mr{H}\upiota$ is a $\mr{G}$-equivalence. Thus, $\mathrm{F}(\mathrm{EG}_+, \mr{H}\ull{\cal{A}}_{\mr{G}}^{\circ})\simeq_{\mr{G}} *$.  Consequently, $\mr{H}^n(\mr{B}; \ull{\cal{A}}_{\mr{G}}^{\circ}) \cong 0$ for all $n \in \mathbb{N}$,  and in particular,   
$\mathfrak{G}_1(\upxi) = \{ 0\}$.  Thus, the result follows from \Cref{thm:ghost}. 
\end{proof}

\bigskip
\begin{rmk}
Any Atiyah Real bundle of a given dimension is  a homogeneous $\mr{C}_2$-equivariant bundle, and therefore it is also a $\mr{H}\ull{\mathcal{A}}_{\mr{C}_2}$-homogeneous bundle (see \Cref{rmk:homVSrelhom}). 
\end{rmk}

\bigskip
\begin{proof}[\textbf{Proof of} \Cref{thm:ARnotorientable}] Recall that the $\mr{C}_2$-fixed points of the tautological Atiyah Real bundle $\hat{\upgamma}$ is the tautological line bundle $\upgamma_1$ over  $\mathbb{RP}^\infty$ 
\[ \hat{\gamma}^{\mr{C}_2} \cong \upgamma_1 .\]
Consequently, we have a commutative diagram 
\[ 
\begin{tikzcd}
\mr{S}^1 \dar[hook, "\mr{V}_1"'] \rar[hook, "\mr{U}_1"] & \mathbb{RP}^{\infty} \dar[hook,"\mr{V}_2"'] \rar["\mr{U}_2"] &  \Omega^{\infty} \bo \dar[hook, "\mr{V}_3"]  \\ 
\hat{\mathbb{C}}\mathbb{P}^1 \rar[hook, "\mr{L}_1"] &  \hat{\mathbb{C}}\mathbb{P}^\infty \rar["\mr{L}_2"] & \Omega^{\infty}_{\mr{C}_2} {\sf br} , 
\end{tikzcd}
\]
where $\mr{V}_i$ are the inclusion of the fixed points, $\mr{U}_1$ and $\mr{L}_1$ are the skeletal inclusions, and,  $\mr{U}_2$ and $\mr{L}_2$ are the classifying maps for $\upgamma_1$ and $\hat{\upgamma}$ respectively. Note that 
\begin{enumerate}
\item  $\mr{U}_2 \circ \mr{U}_1$ is the classifying map for the m\"obius bundle and therefore represents the generator in $\pi_1\bo \cong \mathbb{Z}/2$, and, 
\item  $\mr{V}_{3\ast}: \pi_1\bo \longrightarrow \pi_1^{\mr{C}_2} {\sf br}$ is an isomorphism. 
\end{enumerate}
Consequently, the composition $\mr{L}_2 \circ \mr{L}_1 \circ \mr{V}_1$ represents the nonzero element \[ \eta \in \pi_1^{\mr{C}_2} {\sf br} \cong \mathbb{Z}/2.\] 
Greenlees \cite[pg 67]{GreenleeskoG} showed that \[ \pi_1^{\mr{C}_2}( \bo_{\mr{C}_2}) \cong \mr{RO}(\mr{C}_2)/\mr{RU}(\mr{C}_2) \cong \mathbb{Z}/2\lbrace [1], [\upsigma] \rbrace,\] and it is a standard fact that the natural map 
$\Omega^{\infty}_{\mr{C}_2} {\sf br} \longrightarrow \Omega^{\infty}_{\mr{C}_2} \bo_{\mr{C}_2}$
sends $\eta$ to $[1 + \upsigma]$ at the level of fundamental groups. Since the natural map from $ \bgl(\mathbb{S}_{\mr{C}_2})$ to $\bgl(\mr{H}\ull{\mathcal{A}}_{\mr{C}_2})$ is an isomorphism on $\pi_1$ and that the $\mr{J}$-homomorphism induces  an isomorphism $\ull{\pi}_1(\bo_{\mr{C}_2}) \cong \ull{\pi}_1( \bgl(\mathbb{S}_{\mr{C}_2}))$
of Mackey functors, it follows that the composition 
\[
\begin{tikzcd}
\mr{S}^1  \rar["\mr{L}_1 \circ \mr{V}_1"']   &  \hat{\mathbb{C}}\mathbb{P}^\infty
\rar["\mr{L}_2"'] \ar[rrrr, bend left, dashed, "{\sf f}^{(0)}_{\hat{\upgamma},
  \mr{H}\ull{\mathcal{A}}_{\mr{C}_2} }"'] & \Omega^{\infty}_{\mr{C}_2} {\sf br} \rar[] &
\Omega^{\infty}_{\mr{C}_2} \bo_{\mr{C}_2} \rar[" \mJJ^{\circ}_{\mr{C}_2}"'] &
\bgl(\mathbb{S}_{\mr{C}_2}) \rar[""']  & \bgl(\mr{H}\ull{\mathcal{A}}_{\mr{C}_2}), 
\end{tikzcd}
\]
is not null. Thus, ${\sf w}_1^{\ull{\mathcal{A}}_{\mr{C}_2}}(\hat{\upgamma}) := [{\sf f}^{(0)}_{\hat{\upgamma}, \mr{H}\ull{\mathcal{A}}_{\mr{C}_2} }]$ cannot equal zero. 
\end{proof}
\subsection{Nonexistence of $\mr{H}\ull{\mathcal{A}}_{\mr{G}}$-orientation of $\upgamma_{\uprho}$} \ 

The underlying nonequivariant bundle of $\upgamma_{\uprho}$ is isomorphic to  $
\upgamma_1^{\oplus |\mr{G}|}$, the
$|\mr{G}|$-fold direct sum of the tautological line bundle $\upgamma_1$ over
$\mathbb{RP}^{\infty}$. Therefore, the restriction map 
\[ 
\begin{tikzcd}
{\sf res}^{\mr{G}}_{\sf e}: \mr{H}^{1}_{\mr{G}}(\mr{B}_{\mr{G}} \Sigma_2; \ull{\mathbb{F}}_2) \ar[rr] && {\mr{H}^{1}(\mathbb{RP}^\infty; \mathbb{F}_2) }
\end{tikzcd}
\]
sends the first  $\mr{G}$-equivariant Stiefel--Whitney class ${\sf w}_1^{\ull{\mathbb{Z}}}(\upgamma_{\uprho})$ to  
\[
 {\sf  w}_1^{\mathbb{Z}}(\upgamma_1^{\oplus |\mr{G}|})   = |\mr{G}|\cdot {\sf  w}_1^{\mathbb{Z}}(\upgamma_1) ,\]  
 which is nonzero when $|\mr{G}|$ is odd. Thus,  ${\sf w}_1^{\ull{\mathbb{Z}}}(\upgamma_{\uprho})$  must also be nonzero in this case, and by \Cref{thm:w1orient}, we conclude that $\upgamma_{\uprho}$ is not $\mr{H} \ull{\mathbb{Z}}$-orientable. 

\begin{notn} For a group $\mr{G}$ and its subgroup $\mr{H}$, let
  \[ 
  \begin{tikzcd}
  \upiota_{\mr{H}}: \Top^{\mr{G}} \rar & \Top^{\mr{H}}
  \end{tikzcd}
     \]
 denote the functor that restricts the action of $\mr{G}$ to $\mr{H}$. The right adjoint of $\upiota_{\mr{H}}$ is the functor $\mr{Map}_{\mr{H}}(\mr{G}, -)$ which converts an $\mr{H}$-space $\mr{X}$  to a $\mr{G}$-space  consisting of $\mr{H}$-equivariant maps from $\mr{G}$ to $\mr{X}$. Let 
 \[ 
 \begin{tikzcd}
\upeta: \mathbbm{1}_{\Top^{\mr{G}}} \rar & \mr{Map}_{\mr{H}}(\mr{G}, \upiota_{\mr{H}}(-))
 \end{tikzcd}
 \]
 denote the unit of this adjunction. We also use $\upiota_{\mr{H}}$ to denote the corresponding restriction functor $\Sp^{\mr{G}}$ to $\Sp^{\mr{H}}$.
 \end{notn}
  \begin{notn} \label{Indbundle}
 Let $\mr{B}$ be  a $\mr{G}$-space   such that its restriction
 $\upiota_{\mr{H}} (\mr{B})$ is the base space of the  $\mr{H}$-equivariant
 bundle $\upxi$. Then $\mr{Map}_{\mr{H}}(\mr{G}, \upxi)$  is a
 $\mr{G}$-equivariant bundle over  $\mr{Map}_{\mr{H}}(\mr{G},
 \upiota_{\mr{H}}(\mr{B}))$. 
 We let 
 \[ {\sf Ind}_{\mr{H}}^{\mr{G}}(\upxi) := \upeta_{\mr{B}}^{\ast} \mr{Map}_{\mr{H}}(\mr{G}, \upxi) \] 
 denote the $\mr{G}$-equivariant bundle over $\mr{B}$ obtaining by 
 pulling back $\mr{Map}_{\mr{H}}(\mr{G}, \upxi)$ along the natural map $\upeta_{\mr{B}}: \mr{B} \longrightarrow \mr{Map}_{\mr{H}}(\mr{G}, \upiota_{\mr{H}}(\mr{B}))$. 
\end{notn}
  \begin{thm}  \label{SWcompare} Let $\R$ be an $\mathbb{E}^{\mr{G}}_{\infty}$-ring spectrum. Suppose $\upxi$ is an 
    $\mr{H}$-equivariant $\upiota_{\mr{H}}(\mr{R})$-homogeneous bundle over  $\upiota_{\mr{H}} (\mr{B})$ where $\mr{B}$ is a $\mr{G}$-space. Then 
  \[ {\sf w}_1^{\mr{R}}({\sf Ind}_{\mr{H}}^{\mr{G}}(\upxi)) = {\sf tr} ({\sf w}_1^{\upiota_\mr{H}(\mr{R})}(\upxi)), \] 
 where ${\sf tr}: [\upiota_{\mr{H}}(\mr{B})_+, \bgl(\upiota_{\mr{H}}(\mr{R})) ]^{\mr{H}} \longrightarrow [\mr{B}_+, \bgl(\mr{R})]^{\mr{G}} $ is the transfer map. 
 \end{thm}
 \begin{proof}[\textbf{Proof}] The classifying map of $\mr{Map}_{\mr{H}}(\mr{G},\upxi)$
   % $\mr{F}_{\mr{H}}(\mr{G}, \upxi)$
   corresponds to that of $\upxi$ under the isomorphism   
\begin{eqnarray*} 
[ \mr{Map}_{\mr{H}}(\mr{G}, \upiota_{\mr{H}}(\mr{B}))_+, \Omega_{\mr{G}}^{\infty} \ko_{\mr{G}}]^{\mr{G}} &\cong& [\Sigma^{\infty}_{\mr{G}} \mr{Map}_{\mr{H}}(\mr{G}, \upiota_{\mr{H}}(\mr{B}))_+,  \ko_{\mr{G}}]^{\mr{G}} \\
&\cong& [ \mr{Map}_{\mr{H}}(\mr{G}_+, \Sigma^{\infty}_{\mr{H}}\upiota_{\mr{H}}(\mr{B}_+)) , \ko_{\mr{G}}]^{\mr{G}} \\
&\cong & [  \Sigma^{\infty}_{\mr{H}} \upiota_{\mr{H}}(\mr{B})_+ , \ko_{\mr{H}}]^{\mr{H}}  \\
&\cong & [\upiota_{\mr{H}}(\mr{B})_+, \Omega_{\mr{H}}^{\infty} \ko_{\mr{H}}]^{\mr{H}},
\end{eqnarray*}
and its precomposition  with 
$\upeta_{\mr{B}}: \mr{B}_+ \to \mr{Map}_{\mr{H}}(\mr{G},
\upiota_{\mr{H}}(\mr{B}))_+$ is the classifying map of ${\sf Ind}_{\mr{H}}^{\mr{G}}(\upxi)$.
However, the isomorphism above composed with  $\upeta_{\mr{B}}^{\ast}$ is   simply the transfer map  
\[ 
\begin{tikzcd}
  {\sf tr} : [ \upiota_{\mr{H}}(\mr{B})_+ , \ko_{\mr{H}}]^{\mr{H}} \rar
  &  {[ \mr{B}_+, \ko_{\mr{G}} ]^{\mr{G}}, }
\end{tikzcd}
\] 
 of the Mackey functor $\ull{\ko}_{\mr{G}}^{0} (\mr{B}_+ )$, and 
 the commutative diagram 
\[ 
\begin{tikzcd}
{[  \upiota_{\mr{H}}(\mr{B})_+ , \ko_{\mr{H}}]^{\mr{H}}} \rar["{\sf tr}"] \dar[" \pic(\iota_{\upiota_{\mr{H}}(\mr{R})} ) \circ \mJJ_{\mr{H} \ast}"'] &  {[ \mr{B}_+, \ko_{\mr{G}} ]^{\mr{G}}, } \dar[" \pic(\iota_{\mr{R}}) \circ \mJJ_{\mr{G} \ast} "] \\
{[  \upiota_{\mr{H}}(\mr{B})_+ , \pic(\upiota_{\mr{H}}(\mr{R}) )]^{\mr{H}}} \rar["{\sf tr}"] &  {[ \mr{B}_+, \pic(\mr{R}) ]^{\mr{G}}, }
\end{tikzcd}
\] 
implies $[{\sf f}_{{\sf Ind}_{\mr{H}}^{\mr{G}}(\upxi), \mr{R}}] = {\sf tr}([{\sf f}_{\upxi, \upiota_{\mr{H}}(\mr{R}) } ])$.  Consequently, ${\sf Ind}_{\mr{H}}^{\mr{G}}(\upxi)$ is $\mr{R}$-homogeneous when $\upxi$ is $\upiota_{\mr{H}}(\mr{R})$-homogeneous and ${\sf w}_1^{\mr{R}}({\sf Ind}_{\mr{H}}^{\mr{G}}(\upxi)) = {\sf tr} ({\sf w}_1^{\upiota_\mr{H}(\mr{R})}(\upxi))$. 
\end{proof}

\bigskip
\begin{proof}[\textbf{Proof of} \Cref{thm:gamma_rho_orientation} ] 
As noted after \eqref{C2bundleMap},  when $\mr{G}$ is the group of order
  $2$, $\upgamma_{\uprho_{\mr{G}}}$ is $\mr{H}\ull{\mathbb{Z}}$-orientable. 
  %\sout{First note that \eqref{C2bundleMap} implies $\upgamma_{\uprho_{\mr{G}}}$ is $\mr{H}\ull{\mathbb{Z}}$-orientable when $\mr{G}$ is the group of order $2$.}
Now let $\mr{G}$ be an  arbitrary group of even order. A simple counting
argument shows that there exists an element (other than $1$) which is its own
inverse. Thus,  $\mr{G}$  admits at least one subgroup of order $2$;  choose any one of them and denote it by $\mr{T}$. Observe that the bundle $\upgamma_{\uprho_{\mr{G}}}$ is isomorphic to  ${\sf Ind}_{\mr{T}}^{\mr{G}}( \upgamma_{\uprho_{\mr{T}}})$ (see \Cref{Indbundle}).
Since $\upgamma_{\mr{T}}$ is $\mr{H}\ull{\mathbb{Z}}$-orientable, it follows from 
\Cref{SWcompare} that 
\[ {\sf w}_1^{\ull{\mathbb{Z}}}(\upgamma_{\uprho_{\mr{G}}}) = {\tr} ({\sf w}_1^{\ull{\mathbb{Z}}}(\upgamma_{\uprho_{\mr{T}}}) ) ={\sf tr}( 0) = 0,   \]
thus $\upgamma_{\uprho_{\mr{G}}}$ is $\mr{H}\ull{\mathbb{Z}}$-orientable. 

Now we  show that $\upgamma_{\uprho_{\mr{G}}}$ is not $\mr{H}
\ull{\cal{A}}_{\mr{G}}$-orientable. By construction
$\upgamma_{\uprho_{\mr{G}}}$ is homogeneous, therefore $\mr{H}\ull{\cal{A}}_{\mr{G}}$-homogeneous
(see \Cref{rmk:homVSrelhom}). By \Cref{thm:fixBGPi}, each of the components of
$\mr{G}$-fixed points of $\mr{B}_{\mr{G}}\Sigma_2$  is isomorphic to
$\mr{B}\Sigma_2$. Note that, $\upgamma_{\uprho_{\mr{G}}}$ is isomorphic to  ${\sf  Ind}_{\sf e}^{\mr{G}}(\upgamma_1)  $. Furthermore, if we restrict $\upgamma_{\uprho_{\mr{G}}}$ along a nontrivial loop in any component of  $(\mr{B}_{\mr{G}}\Sigma_2)^{\mr{G}}$,  we 
get ${\sf Ind}_{\sf e}^{\mr{G}}(\mathfrak{m})$
%\foling{Take a non-trivial loop $S^1$ in any component of the $\mr{G}$-fixed points, the restriction of $\upgamma_{\uprho_{\mr{G}}} \cong {\sf  Ind}_{\sf e}^{\mr{G}}(\upgamma_1)$ is isomorphic to ${\sf Ind}_{\sf e}^{\mr{G}}(\mathfrak{m})$, } \sout{The restriction of $\upgamma_{\uprho_{\mr{G}}} \cong {\sf Ind}_{\sf e}^{\mr{G}}(\upgamma_1)$ to the $1$-skeleton of any of its component is isomorphic to ${\sf Ind}_{\sf e}^{\mr{G}}(\mathfrak{m})$,}
\[
\begin{tikzcd}
{\sf Ind}_{\sf e}^{\mr{G}}(\mathfrak{m}) \ar[rrr, dashed] \dar[dashed] &&& {\sf Ind}_{\sf e}^{\mr{G}}(\upgamma_{1}) \dar[] \\
 \mr{S}^1  \rar[hook] & \mr{B}\Sigma_2   \rar[hook] & (\mr{B}_{\mr{G}}\Sigma_2)^{\mr{G}} \rar[hook] &  \mr{B}_{\mr{G}}\Sigma_2, 
   \end{tikzcd}
  \]

 where $\mathfrak{m}$ is the mobius bundle over $\mr{S}^1$. Thus, in order to show that $\upgamma_{\uprho_{\mr{G}}}$ is not $\mr{H}\ull{\cal{A}}_{\mr{G}}$-orientable, it is enough to show it for ${\sf Ind}_{\sf e}^{\mr{G}}(\mathfrak{m})$. 
  
 It is a classical fact (see Axiom 4 \cite[$\mathsection$4]{MSchar}) that  ${\sf w}_1^{\mathbb{Z}}(\mathfrak{m})$ is the element 
 \[-1 \in \mathbb{Z}^{\times} \cong \mr{H}^{1} ( \mr{S}^1; \mathbb{F}_2) \cong \ull{\cal{A}}_{\mr{G}}^{\times}(\mr{G}/{\sf e}).\] 
A  standard formula for the norm  of the sum of two elements (see \cite[Example~1.6.3]{KMazur}) implies that 
   \[ \mathrm{norm}(-1) \equiv -1 \mod \langle [\mr{G}/\mr{H}]: \text{$\mr{H} \subsetneq \mr{G}$}\rangle \]
 in $\ull{\cal{A}}_{\mr{G}}(\mr{G}/\mr{G})$.  In particular, $\mathrm{norm}(-1) \neq 1$, thus 
 \[ \tr(-1) \in \mr{H}_{\mr{G}}^1(\mr{S}^1 ; \ull{\cal{A}}_{\mr{G}}^{\times}) \cong \ull{\cal{A}}_{\mr{G}}^{\times}(\mr{G}/\mr{G})  \]
  is a nontrivial element.
By \Cref{SWcompare} the first Stiefel--Whitney class for ${\sf Ind}_{\sf e}^{\mr{G}}(\mathfrak{m})$ 
  \[ {\sf w}^{\ull{\cal{A}}_{\mr{G}}}_1({\sf Ind}_{\sf e}^{\mr{G}}(\mathfrak{m})) = {\sf tr}(-1) \in \mr{H}^1(\mr{B}; \ull{\cal{A}}^{\times}_{\mr{G}}) \]
  is also nonzero. Consequently  ${\sf Ind}_{\sf e}^{\mr{G}}(\mathfrak{m})$, and therefore $\upgamma_{\uprho_{\mr{G}}}$, does not admit an $\mr{H}\ull{\cal{A}}_{\mr{G}}$-orientation. 
  \end{proof}
  
\appendix
\section{Units of Burnside Tambara functors} \label{Appendix:unit}
In anticipation of future applications of \Cref{thm:ghost}, we record the Mackey functor structure of  $\ull{\mathcal{A}}_{\mr{G}}^{\times}$ for a few familiar  abelian $2$-groups.
 The starting point of  these calculations is a fundamental result due to Matsuda \cite{Matsuda} which identifies $ \ull{\mathcal{A}}_{\mr{G}}^{\times}(\mr{G}/\mr{H}) := \mr{A}_{\mr{H}}^{\times}$, where $\mr{A}_{\mr{H}}$ is the Burnside ring of the group $\mr{H}$.  
\begin{thm}[Matsuda] \label{thm:Matsuda} For a finite group $\mr{G}$, the units $\mr{A}_{\mr{G}}^{\times} \cong (\mathbb{Z}/2)^{m +1}$,  where $m$ is the cardinality of the set 
\[ \mathcal{I} =  \{ \mr{H}: \text{$\mr{H} \subset \mr{G}$ has index $2$} \}/(\text{conjugation}).\]
The elements  $[\mr{G}/\mr{H}] - 1$ for $\mr{H} \in \mathcal{I}$, along with $-1$, form a basis for $\mr{A}_{\mr{G}}^{\times}$.
\end{thm}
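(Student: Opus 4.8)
The plan is to reduce the statement to the table of marks and a sign computation. Write $\phi\colon \mr{A}_{\mr{G}} \to \prod_{(\mr{H})}\mathbb{Z}$, $\phi_{\mr{H}}(\mr{X}) = |\mr{X}^{\mr{H}}|$, for the mark (fixed-point) homomorphism indexed by conjugacy classes of subgroups; this is the classical injective ring homomorphism with finite cokernel. First I would record that $u \in \mr{A}_{\mr{G}}$ is a unit if and only if $u^2 = 1$: a unit forces each mark $\phi_{\mr{H}}(u) \in \mathbb{Z}^{\times} = \{\pm 1\}$, and conversely if all marks are $\pm 1$ then $\phi(u^2) = \phi(u)^2 = \phi(1)$, so $u^2 = 1$ by injectivity. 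Consequently $\mr{A}_{\mr{G}}^{\times}$ is an elementary abelian $2$-group and the sign-of-marks map
\[ \operatorname{sgn}\colon \mr{A}_{\mr{G}}^{\times} \hookrightarrow (\mathbb{Z}/2)^{\{(\mr{H})\}}, \qquad u \longmapsto (\operatorname{sgn}\phi_{\mr{H}}(u))_{(\mr{H})}, \]
is injective; the whole theorem becomes the determination of its image.

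For the lower bound I would exhibit the proposed basis directly. The element $-1$ is a unit with every mark $-1$. If $[\mr{G}:\mr{H}] = 2$ then $\mr{H}$ is normal, so $\phi_{\mr{K}}([\mr{G}/\mr{H}]) = 2$ when $\mr{K}\subseteq \mr{H}$ and $0$ otherwise; hence $[\mr{G}/\mr{H}]-1$ has all marks $\pm 1$, is a unit, and its sign vector is the $(\pm)$-indicator of $\{\mr{K} : \mr{K}\subseteq\mr{H}\}$. To prove the $m+1$ elements are $\mathbb{F}_2$-independent I would evaluate their sign vectors at the coordinates $\mr{K} = \{e\}$, at $\mr{K} = \mr{H}_i$ (the index-$2$ subgroups), and at $\mr{K} = \mr{G}$: the $\{e\}$-coordinate isolates $-1$, the $\mr{H}_i$-coordinates give the incidence matrix $J_m + I_m$, and the $\mr{G}$-coordinate supplies the extra relation needed in the case $m$ odd, where $\det(J_m+I_m)\equiv m+1 \equiv 0 \pmod 2$. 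This yields $\mr{A}_{\mr{G}}^{\times}\supseteq(\mathbb{Z}/2)^{m+1}$.

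The content of the theorem is the matching upper bound. For this I would invoke the classical congruences (tom Dieck, Dress) cutting out $\im\phi$: a tuple $(a_{\mr{H}})$ lies in $\im\phi$ exactly when, for each subgroup $\mr{H}$,
\[ \sum_{\bar g \in \mr{W}_{\mr{G}}(\mr{H})} a_{\langle \mr{H}, g\rangle} \equiv 0 \pmod{|\mr{W}_{\mr{G}}(\mr{H})|}, \qquad \mr{W}_{\mr{G}}(\mr{H}) = \mr{N}_{\mr{G}}(\mr{H})/\mr{H}. \]
Specialising to $a_{\mr{H}} = \pm 1$ converts each congruence into a constraint on signs. In the term $\bar g = e$ one meets $a_{\mr{H}}$ itself, while every other term involves a strictly larger group $\langle \mr{H}, g\rangle \supsetneq \mr{H}$; processing subgroups from $\mr{G}$ downward therefore rewrites the congruence at $\mr{H}$ as $a_{\mr{H}} \equiv -\sum_{\bar g \ne e} a_{\langle \mr{H}, g\rangle}$ modulo $|\mr{W}_{\mr{G}}(\mr{H})|$, an expression in the already-chosen larger signs. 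Since $+1$ and $-1$ are both admissible only when $1\equiv -1 \pmod{|\mr{W}_{\mr{G}}(\mr{H})|}$, a modulus $\ge 3$ pins down $a_{\mr{H}}$ uniquely while a modulus $\le 2$ leaves a genuine binary choice.

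The main obstacle is to convert this local bookkeeping into the global count $m+1$. One must identify the subgroups that contribute a free choice with the generators $\{-1\}\cup\{[\mr{G}/\mr{H}]-1 : [\mr{G}:\mr{H}]=2\}$, and verify that the forced congruences are always simultaneously consistent, so that no branch is pruned and no additional collapsing of the dimension occurs. Establishing this correspondence — that the index-$2$ subgroups together with $\mr{G}$ account precisely for the free parameters, and hence that $\operatorname{sgn}$ lands in and exhausts the span of the explicit units — is the delicate combinatorial heart of the argument and the step I expect to require the most care.
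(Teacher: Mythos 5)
First, a caveat on the comparison itself: the paper does not prove this statement --- it is quoted from Matsuda's work with a citation, so there is no in-paper argument to measure yours against. The first half of your proposal is correct and complete in outline: the mark homomorphism shows $u\in\mr{A}_{\mr{G}}^{\times}$ iff all marks are $\pm 1$ iff $u^{2}=1$; the elements $-1$ and $[\mr{G}/\mr{H}]-1$ for $[\mr{G}:\mr{H}]=2$ (hence $\mr{H}$ normal) are units with the sign vectors you describe; and the independence argument via the coordinates at $\{e\}$, at the $\mr{H}_{i}$, and at $\mr{G}$ goes through, including the extra relation needed when $m$ is odd.

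The genuine gap is in the upper bound, and it is not merely a delicate step left open: the correspondence you are hoping to establish is false. In your top-down analysis a free binary choice occurs at every conjugacy class $(\mr{H})$ with $|\mr{W}_{\mr{G}}(\mr{H})|\le 2$, and such subgroups need not be $\mr{G}$ or of index $2$. Concretely, take $\mr{G}=\Sigma_{3}$, so $m=1$ and the statement predicts $|\mr{A}_{\Sigma_{3}}^{\times}|=4$. The classes $(\langle(12)\rangle)$ (self-normalizing, so $|\mr{W}|=1$), $(\mr{A}_{3})$ (with $|\mr{W}|=2$) and $(\Sigma_{3})$ each contribute a free sign, and the only nontrivial congruence, at $\mr{H}=\{e\}$, namely $a_{e}+3a_{\langle(12)\rangle}+2a_{\mr{A}_{3}}\equiv 0 \pmod 6$, admits a unique admissible $a_{e}\in\{\pm1\}$ for every choice of the other signs, so no branch is pruned and your method, correctly executed, yields $|\mr{A}_{\Sigma_{3}}^{\times}|=8$. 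An explicit extra unit is $u=[\Sigma_{3}/e]-2[\Sigma_{3}/\langle(12)\rangle]+[\Sigma_{3}/\Sigma_{3}]$, whose marks at $(e,\langle(12)\rangle,\mr{A}_{3},\Sigma_{3})$ are $(1,-1,1,1)$; this is tom~Dieck's unit attached to the standard $2$-dimensional representation and does not lie in the span of $-1$ and $[\Sigma_{3}/\mr{A}_{3}]-1$. So the statement as quoted, for an arbitrary finite group, is false; Matsuda's theorem carries hypotheses (it holds, e.g., for abelian groups and $2$-groups, and reduces to $\{\pm1\}$ for odd order), which are the only cases the paper actually uses. Your strategy is a sound way to compute $\mr{A}_{\mr{G}}^{\times}$ in general, but it can only yield the formula $(\mathbb{Z}/2)^{m+1}$ after restricting to groups in which every $(\mr{H})$ with $|\mr{W}_{\mr{G}}(\mr{H})|\le 2$ is either $\mr{G}$ or of index $2$, together with the no-pruning verification at the remaining subgroups.
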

While the restriction maps of $\ull{\mathcal{A}}_{\mr{G}}^{\times}$  is straightforward to determine, the  transfer maps are deduced from  the formula of  norm map $\mr{N}_{\mr{H}}^{\mr{G}}: \ull{\mathcal{A}}_{\mr{G}}(\mr{G}/\mr{H}) \longrightarrow  \ull{\mathcal{A}}_{\mr{G}}(\mr{G}/\mr{G})
$ given by 
\begin{equation} \label{norm}
\begin{tikzcd}
\mr{N}_{\mr{H}}^{\mr{G}}(\mr{X}) =  \mr{Map}_{\mr{H}}(\mr{G}, \mr{X}), 
\end{tikzcd}
 \end{equation}
where $\mr{X}$ is   a finite $\mr{H}$-set (see \cite[Example 1.4.6]{KMazur} as well as \cite[Appendix A]{Nakaoka}).  The Mackey functor structure of $\ull{\cal{A}}_{\mr{G}}^{\times}$  can be complicated depending on the structure of $\mr{G}$ and 
  has been the subject of research for several decades \cite{Matsuda,MatsudaMiyata,Yoshida, Yalcin, Bouc1, BarkerL, Bouc2,Valero}.

When $\mr{G} = \mr{C}_{2^n}$, then the subgroups of $\mr{G}$ are cyclic of order $2^k$, in fact, there is exactly one for each $0 \leq k \leq n$. We will denote this subgroup by $\mr{C}_{2^k}$. From \Cref{thm:Matsuda}, we get 
 \[ \ull{\mathcal{A}}_{\mr{C}_{2^n}}^{\times}(\mr{C}_{2^n}/\mr{C}_{2^k}) \cong \mathbb{Z}/2\{ {\sf a}_k, {\sf b}_k \}, \]
 where ${\sf a}_k = -1$ and ${\sf b}_k = [\mr{C}_{2^k}/\mr{C}_{2^{k-1}}] -1$. 
 \begin{prop} \label{prop:cyclic} The restriction and the transfer map in the diagram 
 \[ 
  \begin{tikzcd}
  \ull{\mathcal{A}}_{\mr{C}_{2^n}}^{\times}(\mr{C}_{2^n}/\mr{C}_{2^k}) \ar[dd, bend right, "{\sf res}_k"'] \\
  \\
  \ull{\mathcal{A}}_{\mr{C}_{2^n}}^{\times}(\mr{C}_{2^n}/\mr{C}_{2^{k-1}}) \ar[uu, bend right, "{\sf tr}_k"']
  \end{tikzcd}
 \]
 satisfies
 $ {\sf tr}_k({\sf a}_{k-1}) = {\sf tr}_k({\sf b}_{k-1}) = {\sf b}_{k}, \  {\sf res}_k({\sf a}_k) = {\sf a}_{k-1}, \  {\sf res}_k({\sf b}_k) = 0. $
\end{prop}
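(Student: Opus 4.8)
The plan is to reduce all four identities to explicit computations in the Burnside rings $\mr{A}_{\mr{C}_{2^k}}$, using two structural facts. First, the \emph{restriction} maps of the unit Mackey functor $\ull{\mathcal{A}}^{\times}_{\mr{C}_{2^n}}$ are the ring restrictions of the Burnside Tambara functor, so they send units to units by functoriality. Second, the \emph{transfer} maps ${\sf tr}_k$ are the multiplicative norm maps $\mr{N}_{\mr{C}_{2^{k-1}}}^{\mr{C}_{2^k}}$ (this is exactly the identification of transfer with norm on units already used in the proof of \Cref{thm:gamma_rho_orientation}). Since $[\mr{C}_{2^k}:\mr{C}_{2^{k-1}}]=2$, with $\mr{C}_{2^{k-1}}$ normal and, because the group is abelian, acting with trivial Weyl action, every norm in sight is an index-$2$ norm with trivial conjugation. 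Consequently Tambara reciprocity \cite[Example~1.6.3]{KMazur} takes the clean form $\mr{N}(x+y)=\mr{N}(x)+\mr{N}(y)+\mathrm{ind}(xy)$, where $\mathrm{ind}$ denotes the \emph{additive} transfer (induction) of $\mr{H}$-sets, not the norm ${\sf tr}_k$.

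For the restriction formulas I first note that ${\sf res}_k({\sf a}_k)={\sf res}_k(-1)=-1={\sf a}_{k-1}$ since restriction is a ring map. For ${\sf b}_k$ I restrict the transitive set $[\mr{C}_{2^k}/\mr{C}_{2^{k-1}}]$: as $\mr{C}_{2^{k-1}}$ is normal in the abelian group it acts trivially on these two cosets, so the restriction is two fixed points, i.e. the integer $2$, giving ${\sf res}_k({\sf b}_k)=2-1=1$, the identity unit $0$. For the first transfer I apply reciprocity to $0=1+(-1)$: using that $\mr{N}$ of the empty set is $0$ and that $\mathrm{ind}(1)=[\mr{C}_{2^k}/\mr{C}_{2^{k-1}}]$, I solve $0=1+\mr{N}(-1)-[\mr{C}_{2^k}/\mr{C}_{2^{k-1}}]$ to obtain $\mr{N}(-1)=[\mr{C}_{2^k}/\mr{C}_{2^{k-1}}]-1={\sf b}_k$, hence ${\sf tr}_k({\sf a}_{k-1})={\sf b}_k$.

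The only substantial step is ${\sf tr}_k({\sf b}_{k-1})$, which requires $k\ge 2$ so that ${\sf b}_{k-1}$ is defined. Writing ${\sf b}_{k-1}=\mr{X}+(-1)$ with $\mr{X}=[\mr{C}_{2^{k-1}}/\mr{C}_{2^{k-2}}]$, reciprocity gives $\mr{N}({\sf b}_{k-1})=\mr{N}(\mr{X})+\mr{N}(-1)-\mathrm{ind}(\mr{X})$. Here $\mathrm{ind}(\mr{X})=[\mr{C}_{2^k}/\mr{C}_{2^{k-2}}]$ by the induction formula $\mr{C}_{2^k}\times_{\mr{C}_{2^{k-1}}}(\mr{C}_{2^{k-1}}/\mr{C}_{2^{k-2}})=\mr{C}_{2^k}/\mr{C}_{2^{k-2}}$, and $\mr{N}(-1)={\sf b}_k$ from the previous step, so the claim rests on showing $\mr{N}(\mr{X})=[\mr{C}_{2^k}/\mr{C}_{2^{k-2}}]$, after which the two four-element terms cancel and leave ${\sf b}_k$. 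This norm of a transitive index-$2$ set is the main obstacle. I would compute it from the defining formula $\mr{N}(\mr{X})=\mr{Map}_{\mr{C}_{2^{k-1}}}(\mr{C}_{2^k},\mr{X})$ of \eqref{norm}: the underlying set has $|\mr{X}|^{2}=4$ elements, and the $\mr{C}_{2^{k-1}}$-fixed points are $\mr{Map}_{\mr{C}_{2^{k-1}}}(\mr{C}_{2^k}/\mr{C}_{2^{k-1}},\mr{X})=(\mr{X}^{\mr{C}_{2^{k-1}}})^{2}=\emptyset$, since $\mr{C}_{2^{k-1}}$ acts transitively and nontrivially on $\mr{X}$. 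A four-element $\mr{C}_{2^k}$-set with no $\mr{C}_{2^{k-1}}$-fixed points can only be the transitive set $[\mr{C}_{2^k}/\mr{C}_{2^{k-2}}]$, because any orbit of size $1$ or $2$ would have isotropy containing $\mr{C}_{2^{k-1}}$ and so contribute such fixed points; this forces the identification.

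Finally I would dispatch the boundary case $k=1$, where $\mr{C}_{2^0}$ is trivial, $\mr{A}_{\mr{C}_1}^{\times}=\{\pm1\}$ carries only the generator ${\sf a}_0$, the ${\sf b}_{k-1}$ statement is vacuous, and the ${\sf a}$ and ${\sf res}$ formulas are precisely the $k=1$ instances of the computations above. It is worth emphasizing that restriction alone cannot detect ${\sf b}_k$, since $[\mr{C}_{2^k}/\mr{C}_{2^{k-1}}]$ restricts to the constant $2$ on every proper subgroup and hence ${\sf b}_k$ restricts to $0$ throughout; this is exactly why the norm computation of the previous paragraph, rather than a comparison of restrictions, is genuinely required to evaluate the second transfer.
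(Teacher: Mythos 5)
Your proof is correct, and while it shares the paper's overall framework---the transfer on the units of the Burnside Tambara functor is the multiplicative norm, computed from $\mr{N}_{\mr{H}}^{\mr{G}}(\mr{X}) = \mr{Map}_{\mr{H}}(\mr{G},\mr{X})$ as in \eqref{norm}---it handles the one substantive identity, ${\sf tr}_k({\sf b}_{k-1})={\sf b}_k$, by a genuinely different route. The paper records a closed-form expansion of $\mr{N}_{\mr{C}_{2^{k-i}}}^{\mr{C}_{2^k}}(l)$ for an integer $l$, reads off $\mr{N}(-1)={\sf b}_k$ for every $i$, and then observes that ${\sf b}_{k-1}$ is itself the norm ${\sf tr}_e^{\mr{C}_{2^{k-1}}}({\sf a}_0)$, so transitivity of norms gives ${\sf tr}_k({\sf b}_{k-1})={\sf tr}_e^{\mr{C}_{2^k}}({\sf a}_0)={\sf b}_k$ with no further computation. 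You instead apply index-$2$ Tambara reciprocity to ${\sf b}_{k-1}=[\mr{C}_{2^{k-1}}/\mr{C}_{2^{k-2}}]+(-1)$ and evaluate the norm of the transitive set directly by an orbit count (a $4$-element $\mr{C}_{2^k}$-set with empty $\mr{C}_{2^{k-1}}$-fixed points must be $[\mr{C}_{2^k}/\mr{C}_{2^{k-2}}]$), after which the induction term cancels it. The paper's argument is shorter once one notices that every ${\sf b}_j$ is a norm of $-1$ from the trivial subgroup, whereas yours is more self-contained and local: it needs only the index-$2$ reciprocity formula and a fixed-point count, and it makes transparent why the answer is independent of $k$. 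Your treatment of the boundary case $k=1$ and your closing remark that restrictions alone cannot detect ${\sf b}_k$ are both correct, and are if anything slightly more careful than the paper's exposition.
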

\begin{proof}[\textbf{Proof}] 
 The map ${\sf res}_k$ is immediate from restricting the actions. The transfer is  obtained by calculating the norm in the Burnside Tambara functor $\ull{\mathcal{A}}_{G}$ using the formula \eqref{norm}. 
 When $\mr{H} = \mr{C}_{2^{k-i}}$ and $\mr{G}=\mr{C}_{2^k}$ in \eqref{norm}, we get
\begin{equation*} 
\begin{array}{clll}
\mathrm{N}_{\mr{C}_{2^{k-i}}}^{\mr{C}_{2^k}} (l) &=& l + \frac{l^2-l}{2}[\mr{C}_{2^k}/\mr{C}_{2^{k-1}}] +
\frac{l^4 - l^2}{4} [\mr{C}_{2^k}/\mr{C}_{2^{k-2}}] + \dots   \\ 
&& \hspace{150pt}  \cdots  + \frac{l^{2^{k-i}} - l^{2^{k-i-1}}}{2^{i}} [\mr{C}_{2^k}/ \mr{C}_{2^{k-i}} ].
\end{array}
\end{equation*}
By setting $i= 1$ and $l =-1$ in the above equation, we get $ {\sf tr}_k({\sf a}_{k-1}) =  {\sf b}_{k}$, and by setting $i=k$ and $l =-1$,   we get $ {\sf tr}_e^{\mr{C}_{2^k}}({\sf a}_0) = {\sf b}_k$ for all $0 \leq k \leq n$.
Thus, 
\begin{eqnarray*}
{\sf tr}_k({\sf b}_{k-1}) & =& {\sf tr}_k ( {\sf tr}_e^{\mr{C}_{2^{k-1}}}({\sf a}_0)) \\
&=&  {\sf tr}_e^{\mr{C}_{2^k}}({\sf a}_0) \\
&=& {\sf b}_k 
\end{eqnarray*}
as desired. 
\end{proof}

\bigskip
When $ \mr{G} = \mr{C}_2 \times \mr{C}_2,$ there  are three different order $2$ subgroups $\mr{G}_{01} := \mr{C}_2 \times \{ 0\} $, $\mr{G}_{10} = \{ 0\} \times \mr{C}_2$ and $\mr{G}_{11} = \{(x,x): x \in \mr{C}_2\}$. Using \Cref{thm:Matsuda}, we note: 
\begin{align*}
 \ull{\mathcal{A}}_{\mr{G}}^{\times}(\mr{G}/\mr{G})
  &= \mathbb{Z}/2\{{\sf a}_{2}, {\sf b}_{2,10}, {\sf b}_{2,01}, {\sf b}_{2,11}\}
   \\
   \ull{\mathcal{A}}_{\mr{G}}^{\times}(\mr{G}/\mr{G}_{10})
  &= \mathbb{Z}/2\{{\sf a}_{10}, {\sf b}_{10}\}\\
  \ull{\mathcal{A}}_{\mr{G}}^{\times}(\mr{G}/\mr{G}_{01})
  &= \mathbb{Z}/2\{{\sf a}_{01}, {\sf b}_{01}\}\\
  \ull{\mathcal{A}}_{\mr{G}}^{\times}(\mr{G}/\mr{G}_{11})
  &= \mathbb{Z}/2\{{\sf a}_{11}, {\sf b}_{11}\}\\
 \ull{\mathcal{A}}_{\mr{G}}^{\times}(\mr{G}/{\sf e})
  &= \mathbb{Z}/2\{{\sf a}_{0}\},
\end{align*}
where ${\sf a}_0, {\sf a}_{10}, {\sf a}_{01}, {\sf a}_{11}$ and  ${\sf a}_{2}$  denote $-1$ in their respective rings, ${\sf b}_{ij}$ denotes $[\mr{G}_{ij}] -1$, and, ${\sf b}_{2, ij}$ denotes $[\mr{G}/ \mr{G}_{ij}] - 1$.
\begin{prop} \label{C2C2} The restriction and transfer maps as indicated in the diagrams 
\[
\begin{tikzcd}
\ull{\mathcal{A}}_{\mr{G}}^{\times}(\mr{G}/\mr{G}) \ar[dd, bend right, "{\sf res}_{2, ij}"']  &&  \ull{\mathcal{A}}_{\mr{G}}^{\times}(\mr{G}/\mr{G}_{ij}) \ar[dd, bend right, "{\sf res}_{ij}"']  \\  
\\
\ull{\mathcal{A}}_{\mr{G}}^{\times}(\mr{G}/\mr{G}_{ij}) \ar[uu, bend right, "{\sf tr}_{2, ij}"'] && \ull{\mathcal{A}}_{\mr{G}}^{\times}(\mr{G}/{\sf e})  \ar[uu, bend right, "{\sf tr}_{ij}"']
\end{tikzcd}
\]
is determined by the formulas ${\sf res}_{2, ij}({\sf a}_2) = {\sf a}_{ij}, \ {\sf res}_{ij}({\sf a}_{ij}) = {\sf a}_0$, 
\[ 
{\sf res}_{2, ij}({\sf b}_{2,kl}) = \left\lbrace \begin{array}{ccc}
0 & \text{if $ij = kl$,} \\
{\sf b}_{ij}  &\text{otherwise}, 
\end{array} \right.
\]
 $ {\sf tr}_{ij}({\sf a}_{0}) = {\sf b}_{ij} $, ${\sf tr}_{ij}({\sf a}_{ij}) ={\sf b}_{ij} $ and  $ {\sf tr}_{2, ij}({\sf b}_{ij}) = {\sf b}_{2,10} + {\sf b}_{2,01} + {\sf b}_{2,11} $. 
\end{prop}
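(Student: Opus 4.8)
The plan is to read off all four maps from the Tambara-functor structure of $\ull{\mathcal{A}}_{\mr{G}}$. The restriction maps of the unit Mackey functor $\ull{\mathcal{A}}_{\mr{G}}^{\times}$ are simply the restrictions of Burnside classes (a ring map, hence preserving units and fixing $-1$), while its transfers are the multiplicative norms $\mr{N}$ computed via \eqref{norm}, which land in the units because a norm of a unit is a unit (see \cite{KMazur}). Under this dictionary ${\sf res}_{ij}$ and ${\sf res}_{2,ij}$ are honest restrictions, whereas ${\sf tr}_{ij}=\mr{N}_{{\sf e}}^{\mr{G}_{ij}}$ and ${\sf tr}_{2,ij}=\mr{N}_{\mr{G}_{ij}}^{\mr{G}}$. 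Since each of the five groups is a finite $\mathbb{F}_2$-vector space with the basis supplied by \Cref{thm:Matsuda}, it suffices to evaluate each map on those basis elements.

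For the restrictions I would argue exactly as in \Cref{prop:cyclic}. The formulas ${\sf res}_{2,ij}({\sf a}_2)={\sf a}_{ij}$ and ${\sf res}_{ij}({\sf a}_{ij})={\sf a}_0$ are immediate, as $-1$ restricts to $-1$. For ${\sf res}_{2,ij}({\sf b}_{2,kl})$ I would apply the Mackey double-coset formula to $\Res^{\mr{G}}_{\mr{G}_{ij}}[\mr{G}/\mr{G}_{kl}]$: because $\mr{G}$ is abelian the two-element set $\mr{G}/\mr{G}_{kl}$ restricts to two fixed points when $ij=kl$ (since $\mr{G}_{ij}\subseteq\mr{G}_{kl}$) and to one free orbit when $ij\neq kl$ (since $\mr{G}_{ij}\cap\mr{G}_{kl}=\{{\sf e}\}$). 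Subtracting the restriction of $1$ then sends ${\sf b}_{2,ij}$ to the trivial unit $0$ in the first case and to $[\mr{G}_{ij}]-1={\sf b}_{ij}$ in the second.

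For the transfers the cleanest bookkeeping is the table of marks, i.e. the injection $\mr{A}_{\mr{K}}\hookrightarrow\prod_{\mr{L}\subseteq\mr{K}}\mathbb{Z}$ by fixed-point cardinalities, under which a unit is a $\pm1$-vector and, by \eqref{norm}, the $\mr{K}$-mark of a norm is the monomial
\[
\big|\mr{N}_{\mr{H}}^{\mr{G}'}(\mr{X})^{\mr{K}}\big| = \prod_{\mr{K}g\mr{H}\in\mr{K}\backslash\mr{G}'/\mr{H}} \big|\mr{X}^{\,g^{-1}\mr{K}g\cap\mr{H}}\big| = \big|\mr{X}^{\mr{K}\cap\mr{H}}\big|^{[\mr{G}':\mr{K}\mr{H}]},
\]
the last equality using that $\mr{G}$ is abelian. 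The lower transfer ${\sf tr}_{ij}=\mr{N}_{{\sf e}}^{\mr{G}_{ij}}$ takes place inside $\mr{G}_{ij}\cong\mr{C}_2$ and is therefore the $\mr{C}_2$-case of \Cref{prop:cyclic}, giving ${\sf tr}_{ij}({\sf a}_0)={\sf b}_{ij}$. For ${\sf tr}_{2,ij}=\mr{N}_{\mr{G}_{ij}}^{\mr{G}}$ I would tabulate the five marks of ${\sf b}_{ij}$ and of ${\sf a}_{ij}$, run the monomial over $\mr{K}\in\{{\sf e},\mr{G}_{01},\mr{G}_{10},\mr{G}_{11},\mr{G}\}$, and match the resulting sign vectors against those of the ${\sf b}_{2,\cdot}$; this yields ${\sf tr}_{2,ij}({\sf b}_{ij})={\sf b}_{2,10}+{\sf b}_{2,01}+{\sf b}_{2,11}$, together with the value ${\sf tr}_{2,ij}({\sf a}_{ij})={\sf b}_{2,ij}$ needed to pin the map down.

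The main obstacle is norming \emph{virtual} classes correctly: the set-level identity \eqref{norm} only computes $\mr{N}$ of an honest $\mr{H}$-set, so the units ${\sf a}_{ij}=-1$ and ${\sf b}_{ij}=[\mr{G}_{ij}]-1$ must be handled either through Tambara reciprocity (the polynomial expansion used in \Cref{prop:cyclic}) or, more transparently, through the mark formula above, where the difference becomes a product of signs. The delicate point is that $\mr{N}_{\mr{G}_{ij}}^{\mr{G}}({\sf b}_{ij})$ is concentrated at the top: its only nontrivial mark occurs at $\mr{K}=\mr{G}$, where $\mr{K}\cap\mr{G}_{ij}=\mr{G}_{ij}$ reads the $-1$ sitting at the top mark of ${\sf b}_{ij}$ with odd exponent $[\mr{G}:\mr{G}]=1$, while at every proper $\mr{K}$ one reads only the $+1$ bottom mark. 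Re-expressing this single-mark vector in the Matsuda basis, where each generator ${\sf b}_{2,kl}$ spreads a $-1$ across the three subgroups not contained in $\mr{G}_{kl}$, is exactly what forces all three classes to appear symmetrically, and getting these exponent parities right is the crux of the verification.
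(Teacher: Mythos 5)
Your proof is correct, and it reaches the one genuinely nontrivial formula, ${\sf tr}_{2,ij}({\sf b}_{ij})={\sf b}_{2,10}+{\sf b}_{2,01}+{\sf b}_{2,11}$, by a different route than the paper. The paper's proof exploits transitivity of the norm: since ${\sf b}_{ij}=\mr{N}_{\sf e}^{\mr{G}_{ij}}(-1)$, one has ${\sf tr}_{2,ij}({\sf b}_{ij})=\mr{N}_{\sf e}^{\mr{G}}(-1)$, which is then evaluated by the explicit polynomial expansion $\mr{N}_{\sf e}^{\mr{G}}(-1)=-1+[\mr{G}/\mr{G}_{01}]+[\mr{G}/\mr{G}_{10}]+[\mr{G}/\mr{G}_{11}]-[\mr{G}/{\sf e}]$ and matched against the product $(-1)^{\epsilon_1}\prod([\mr{G}/\mr{G}_{kl}]-1)^{\epsilon_{kl}}$ to read off the exponents. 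You instead pass to the table of marks and use the monomial formula for the $\mr{K}$-mark of a coinduced set, extended to units; your sign-vector bookkeeping (the norm of ${\sf b}_{ij}$ is $-1$ only at the top mark, while each ${\sf b}_{2,kl}$ contributes $-1$ at the three subgroups not contained in $\mr{G}_{kl}$) checks out and correctly forces the symmetric answer. Your route buys a uniform, basis-independent computation (and you rightly record ${\sf tr}_{2,ij}({\sf a}_{ij})={\sf b}_{2,ij}$, which the stated proposition omits but which is needed to determine ${\sf tr}_{2,ij}$ on all of $\mathbb{Z}/2\{{\sf a}_{ij},{\sf b}_{ij}\}$); its only cost is that you must justify that the mark formula for $\mr{Map}_{\mr{H}}(\mr{G},-)$ extends from honest $\mr{H}$-sets to virtual classes, a standard point you explicitly flag and which the paper sidesteps by reducing everything to a single norm of $-1$ from the trivial subgroup. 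The restriction computations via the double-coset formula coincide with the paper's ``immediate from restricting the actions.''
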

\begin{proof}[\textbf{Proof}] All claims, except for the formula for ${\sf tr}_{2, ij}({\sf b}_{ij})$,   can be verified directly using arguments very similar to that in the proof of \Cref{prop:cyclic}.  
  
  To see the formula for ${\sf tr}_{2, ij}({\sf b}_{ij})$, we first observe from \eqref{norm} that 
\begin{equation*}
\mr{N}_{\sf e}^{\mr{G}} (-1) = -1 + [\mr{G}/ \mr{G}_{01}] + [\mr{G}/\mr{G}_{10}] + [\mr{G}/\mr{G}_{11}] - [\mr{G}/{\sf e}].
\end{equation*}
If  ${\sf tr}_{2, ij}({\sf b}_{ij}) =  \epsilon_1{\sf a}_2 + \epsilon_2 {\sf  b}_{2,10} +\epsilon_3 {\sf b}_{2,01}+ \epsilon_4{\sf b}_{2,11}$, it means
\begin{equation*}
\mr{N}_{\sf e}^{\mr{G}} (-1) = (-1)^{\epsilon_1}([\mr{G}/ \mr{G}_{01}]-1)^{\epsilon_2}([\mr{G}/ \mr{G}_{10}]-1)^{\epsilon_3}([\mr{G}/ \mr{G}_{11}]-1)^{\epsilon_4}.
\end{equation*}
Comparing the two equations for $\mr{N}_{\sf e}^{\mr{G}} (-1)$, we conclude  $\epsilon_2 = \epsilon_3 = \epsilon_4 = 1$ and $\epsilon_1 =0$.
\end{proof}
\Cref{C2C2} can be readily extended to $\mathbb{T}_n :=  \mr{C}_2^{\times n}$. Any subgroup of $\mathbb{T}_n$ is isomorphic to $\mathbb{T}_k$ for some $k \in \{ 0,1, \dots, n \}$, and there are exactly \[ \frac{(2^n -1)\dots(2^n - 2^{k-1})}{(2^k-1) \dots (2^{k} - 2^{k-1})}\] of them.  
Fix a subgroup $\mr{H}$ isomorphic to $\mathbb{T}_k$. The above formula implies there are exactly $2^k -1$ index $2$ subgroups of $\mr{H}$. Therefore, by \Cref{thm:Matsuda}, 
\[ \ull{\cal{A}}^{\times}_{\mathbb{T}_n}(\mathbb{T}_n/\mr{H}) \cong (\mathbb{Z}/2)^{\times 2^k}\]
 generated by ${\sf a}_{\mr{H}} = -1$ and ${\sf b}_{\mr{H}/\mr{K}'} = [\mr{H}/\mr{K}'] -1 $ where $\mr{K}'$ is an index $2$ subgroup of $\mr{H}$.  Likewise, for a given index $2$ subgroup $\mr{K}$ of $\mr{H}$
\[ \ull{\cal{A}}^{\times}_{\mathbb{T}_n}(\mathbb{T}_n/\mr{K}) \cong (\mathbb{Z}/2)^{\times 2^{k-1}}\] generated by ${\sf a}_{\mr{K}} = -1$ and ${\sf b}_{\mr{K}/\mr{L}'} -1 $ where $\mr{L}'$ is an index $2$ subgroup of $\mr{K}$. 

It is easy to see that the restriction map 
\[ 
\begin{tikzcd}
{\sf res}_{\mr{K}}^{\mr{H}}: \ull{\cal{A}}^{\times}_{\mathbb{T}_n}(\mathbb{T}_n/\mr{H}) \rar & \ull{\cal{A}}^{\times}_{\mathbb{T}_n}(\mathbb{T}_n/\mr{K}) 
\end{tikzcd}
\]
sends ${\sf a}_{\mr{H}}$ to ${\sf a}_{\mr{K}}$, ${\sf b}_{\mr{H}/\mr{K}'}$ to ${\sf b}_{\mr{K}/ (\mr{K} \cap \mr{K}')}$ if $\mr{K}' \neq \mr{K}$, and ${\sf b}_{\mr{H}/\mr{K}} $ to $0$. The transfer map 
\[ 
\begin{tikzcd}
{\sf tr}_{\mr{K}}^{\mr{H}}: \ull{\cal{A}}^{\times}_{\mathbb{T}_n}(\mathbb{T}_n/\mr{K}) \rar & \ull{\cal{A}}^{\times}_{\mathbb{T}_n}(\mathbb{T}_n/\mr{H}) 
\end{tikzcd}
\]
sends ${\sf a}_{\mr{K}}$ to ${\sf b}_{\mr{H}/\mr{K}}$, and ${\sf b}_{\mr{K}/\mr{L}'}$ to ${\sf b}_{\mr{H}/\mr{K}} +{\sf b}_{\mr{H}/\mr{K}_1} + {\sf b}_{\mr{H}/\mr{K}_2}$, where $\mr{K}_1$ and $\mr{K}_2$ are the two index $2$ subgroups of $\mr{H}$ whose intersection with $\mr{K}$ equals $\mr{L}'$.

\bibliographystyle{amsalpha}
\bibliography{Equiv-orientation}

\newcommand{\etalchar}[1]{$^{#1}$}
\providecommand{\bysame}{\leavevmode\hbox to3em{\hrulefill}\thinspace}
\providecommand{\MR}{\relax\ifhmode\unskip\space\fi MR }
% \MRhref is called by the amsart/book/proc definition of \MR.
\providecommand{\MRhref}[2]{%
  \href{http://www.ams.org/mathscinet-getitem?mr=#1}{#2}
}
\providecommand{\href}[2]{#2}
\begin{thebibliography}{BDG{\etalchar{+}}16}

\bibitem[ABG{\etalchar{+}}14]{ABGHR}
Matthew Ando, Andrew~J. Blumberg, David Gepner, Michael~J. Hopkins, and Charles
  Rezk, \emph{Units of ring spectra, orientations and {T}hom spectra via rigid
  infinite loop space theory}, J. Topol. \textbf{7} (2014), no.~4, 1077--1117.
  \MR{3286898}

\bibitem[Ang]{AngelPic}
Vigleik Angelveit, \emph{The picard group of the category of $c_n$-equivariant
  stable homotopy theory}, Available at
  \url{https://arxiv.org/pdf/2109.04625.pdf}.

\bibitem[Ati61]{AtiyahTC}
M.~F. Atiyah, \emph{Thom complexes}, Proc. London Math. Soc. (3) \textbf{11}
  (1961), 291--310. \MR{131880}

\bibitem[Ati66]{AtiyahKReal}
\bysame, \emph{{$K$}-theory and reality}, Quart. J. Math. Oxford Ser. (2)
  \textbf{17} (1966), 367--386. \MR{206940}

\bibitem[Bar16]{BarkerL}
Laurence Barker, \emph{Tornehave morphisms {III}: {T}he reduced {T}ornehave
  morphism and the {B}urnside unit functor}, J. Algebra \textbf{446} (2016),
  19--33. \MR{3421084}

\bibitem[BDG{\etalchar{+}}16]{Expose1}
Clark Barwick, Emanuele Dotto, Saul Glasman, Denis Nardin, and Jay Shah,
  \emph{Parametrized higher category theory and higher algebra: {E}xpos\'e
  {I}--{E}lements of parametrized higher category theory}, arXiv preprint
  arXiv:1608.03657 (2016).

\bibitem[BGL22]{BGL2}
P.~Bhattacharya, B.~Guillou, and A.~Li, \emph{On realizations of the subalgebra
  {$\mathcal{A}^\mathbb{R}(1)$} of the {$\mathbb{R}$}-motivic {S}teenrod
  algebra}, Trans. Amer. Math. Soc. Ser. B \textbf{9} (2022), 700--732.
  \MR{4450906}

\bibitem[Bon19]{Bonventre}
Peter Bonventre, \emph{The genuine operadic nerve}, Theory Appl. Categ.
  \textbf{34} (2019), 736--780. \MR{4011810}

\bibitem[Bou07]{Bouc1}
Serge Bouc, \emph{The functor of units of {B}urnside rings for {$p$}-groups},
  Comment. Math. Helv. \textbf{82} (2007), no.~3, 583--615. \MR{2314054}

\bibitem[Bou21]{Bouc2}
\bysame, \emph{A functorial presentation of units of {B}urnside rings}, Results
  Math. \textbf{76} (2021), no.~2, Paper No. 58, 24. \MR{4228927}

\bibitem[BZZ]{BZZ}
Prasit Bhattacharya, Mingcong Zeng, and Foling Zou, \emph{Equivariant steenrod
  operations}, In preparation.

\bibitem[CGK02]{CGK}
Michael Cole, J.~P.~C. Greenlees, and I.~Kriz, \emph{The universality of
  equivariant complex bordism}, Math. Z. \textbf{239} (2002), no.~3, 455--475.
  \MR{1893848}

\bibitem[CMW01]{CMW}
S.~R. Costenoble, J.~P. May, and S.~Waner, \emph{Equivariant orientation
  theory}, vol.~3, 2001, Equivariant stable homotopy theory and related areas
  (Stanford, CA, 2000), pp.~265--339. \MR{1856029}

\bibitem[CW92]{CW}
S.~R. Costenoble and S.~Waner, \emph{The equivariant {T}hom isomorphism
  theorem}, Pacific J. Math. \textbf{152} (1992), no.~1, 21--39. \MR{1139971}

\bibitem[CW16]{CWordinary}
Steven~R. Costenoble and Stefan Waner, \emph{Equivariant ordinary homology and
  cohomology}, Lecture Notes in Mathematics, vol. 2178, Springer, Cham, 2016.
  \MR{3585352}

\bibitem[Elm83]{Esystems}
A.~D. Elmendorf, \emph{Systems of fixed point sets}, Trans. Amer. Math. Soc.
  \textbf{277} (1983), no.~1, 275--284. \MR{690052}

\bibitem[FLM01]{PicFLM}
H.~Fausk, L.~G. Lewis, Jr., and J.~P. May, \emph{The {P}icard group of
  equivariant stable homotopy theory}, Adv. Math. \textbf{163} (2001), no.~1,
  17--33. \MR{1867202}

\bibitem[GM17]{GMequivK}
Bertrand~J. Guillou and J.~Peter May, \emph{Equivariant iterated loop space
  theory and permutative {$G$}-categories}, Algebr. Geom. Topol. \textbf{17}
  (2017), no.~6, 3259--3339. \MR{3709647}

\bibitem[Gre01]{GreenleesFGL}
J.~P.~C. Greenlees, \emph{Equivariant formal group laws and complex oriented
  cohomology theories}, vol.~3, 2001, Equivariant stable homotopy theory and
  related areas (Stanford, CA, 2000), pp.~225--263. \MR{1856028}

\bibitem[Gre05]{GreenleeskoG}
\bysame, \emph{Equivariant versions of real and complex connective
  {$K$}-theory}, Homology Homotopy Appl. \textbf{7} (2005), no.~3, 63--82.
  \MR{2205170}

\bibitem[HHK{\etalchar{+}}]{HHKWZ}
Jeremy Hahn, Asaf Horev, Inbar Klang, Dylan Wilson, and Foling Zou,
  \emph{Equivariant nonabelian poincar\'e duality and equivariant factorization
  homology of thom spectra}, Available at
  \url{https://arxiv.org/pdf/2006.13348.pdf}.

\bibitem[KMZ]{KMZpresheaf}
Hana Kong, J.~Peter May, and Foling Zou, \emph{Orbital presheaves in
  equivariant infinite loop space theory}.

\bibitem[LM86]{LM}
R.~K. Lashof and J.~P. May, \emph{Generalized equivariant bundles}, Bull. Soc.
  Math. Belg. S\'{e}r. A \textbf{38} (1986), 265--271 (1987). \MR{885537}

\bibitem[LMSM86]{LMMS}
L.~G. Lewis, Jr., J.~P. May, M.~Steinberger, and J.~E. McClure,
  \emph{Equivariant stable homotopy theory}, Lecture Notes in Mathematics, vol.
  1213, Springer-Verlag, Berlin, 1986, With contributions by J. E. McClure.
  \MR{866482}

\bibitem[Mat82]{Matsuda}
Toshimitsu Matsuda, \emph{On the unit groups of {B}urnside rings}, Japan. J.
  Math. (N.S.) \textbf{8} (1982), no.~1, 71--93. \MR{722522}

\bibitem[May77]{MQRT}
J.~Peter May, \emph{{$E_{\infty }$} ring spaces and {$E_{\infty }$} ring
  spectra}, Lecture Notes in Mathematics, Vol. 577, Springer-Verlag, Berlin-New
  York, 1977, With contributions by Frank Quinn, Nigel Ray, and J\o rgen
  Tornehave. \MR{494077}

\bibitem[May90]{MayRmkClassify}
J.~P. May, \emph{Some remarks on equivariant bundles and classifying spaces},
  no. 191, 1990, International Conference on Homotopy Theory (Marseille-Luminy,
  1988), pp.~7, 239--253. \MR{1098973}

\bibitem[May96]{Alaska}
\bysame, \emph{Equivariant homotopy and cohomology theory}, CBMS Regional
  Conference Series in Mathematics, vol.~91, Published for the Conference Board
  of the Mathematical Sciences, Washington, DC; by the American Mathematical
  Society, Providence, RI, 1996, With contributions by M. Cole, G.
  Comeza\~{n}a, S. Costenoble, A. D. Elmendorf, J. P. C. Greenlees, L. G.
  Lewis, Jr., R. J. Piacenza, G. Triantafillou, and S. Waner. \MR{1413302}

\bibitem[May99]{MayOrient}
\bysame, \emph{Equivariant orientations and {T}hom isomorphisms}, Tel {A}viv
  {T}opology {C}onference: {R}othenberg {F}estschrift (1998), Contemp. Math.,
  vol. 231, Amer. Math. Soc., Providence, RI, 1999, pp.~227--243. \MR{1707345}

\bibitem[Maz13]{KMazur}
Kristen~Luise Mazur, \emph{On the {S}tructure of {M}ackey {F}unctors and
  {T}ambara {F}unctors}, ProQuest LLC, Ann Arbor, MI, 2013, Thesis
  (Ph.D.)--University of Virginia. \MR{3187614}

\bibitem[McC83]{McClureJG}
James~E. McClure, \emph{On the groups {$J{\rm O}_{G}X$}. {I}}, Math. Z.
  \textbf{183} (1983), no.~2, 229--253. \MR{704106}

\bibitem[MM83]{MatsudaMiyata}
Toshimitsu Matsuda and Takehiko Miyata, \emph{On the unit groups of {B}urnside
  rings of finite groups}, J. Math. Soc. Japan \textbf{35} (1983), no.~2,
  345--354. \MR{692332}

\bibitem[MMO]{MMO}
J.~Peter May, Mona Merling, and Ang\'elica Osorno, \emph{Equivariant infinite
  loop space theory, the space level story}, Available at
  \url{https://arxiv.org/abs/1704.03413}.

\bibitem[MS74]{MSchar}
John~W. Milnor and James~D. Stasheff, \emph{Characteristic classes}, Annals of
  Mathematics Studies, vol. No. 76, Princeton University Press, Princeton, NJ;
  University of Tokyo Press, Tokyo, 1974. \MR{440554}

\bibitem[Nak14]{Nakaoka}
Hiroyuki Nakaoka, \emph{The spectrum of the {B}urnside {T}ambara functor on a
  finite cyclic {$p$}-group}, J. Algebra \textbf{398} (2014), 21--54.
  \MR{3123752}

\bibitem[Nar16]{Expose4}
Denis Nardin, \emph{Parametrized higher category theory and higher algebra:
  {E}xpos\'e {IV}--{S}tability with respect to an orbital $\infty$-category},
  arXiv preprint arXiv:1608.07704 (2016).

\bibitem[Oko82]{Okonek}
Christian Okonek, \emph{Der {C}onner-{F}loyd-{I}somorphismus f\"ur {A}belsche
  {G}ruppen}, Math. Z. \textbf{179} (1982), no.~2, 201--212. \MR{645496}

\bibitem[Pia91]{Piacenza}
Robert~J. Piacenza, \emph{Homotopy theory of diagrams and {CW}-complexes over a
  category}, Canad. J. Math. \textbf{43} (1991), no.~4, 814--824. \MR{1127031}

\bibitem[San11]{Rekha}
Rekha Santhanam, \emph{Units of equivariant ring spectra}, Algebr. Geom. Topol.
  \textbf{11} (2011), no.~3, 1361--1403. \MR{2821427}

\bibitem[Seg68]{SegalequivK}
Graeme Segal, \emph{Equivariant {$K$}-theory}, Inst. Hautes \'{E}tudes Sci.
  Publ. Math. (1968), no.~34, 129--151. \MR{234452}

\bibitem[Sha23]{Expose2}
Jay Shah, \emph{Parametrized higher category theory}, Algebr. Geom. Topol.
  \textbf{23} (2023), no.~2, 509--644. \MR{4587313}

\bibitem[Tam93]{Tambara}
D.~Tambara, \emph{On multiplicative transfer}, Comm. Algebra \textbf{21}
  (1993), no.~4, 1393--1420. \MR{1209937}

\bibitem[tD69]{TtDClassify}
Tammo tom Dieck, \emph{Faserb\"{u}ndel mit {G}ruppenoperation}, Arch. Math.
  (Basel) \textbf{20} (1969), 136--143. \MR{245027}

\bibitem[VE23]{Valero}
Luis Valero-Elizondo, \emph{Embeddings and units of {B}urnside rings}, Bol.
  Soc. Mat. Mex. (3) \textbf{29} (2023), no.~1, Paper No. 12. \MR{4526383}

\bibitem[Yn05]{Yalcin}
Erg\"{u}n Yal\c{c}\i~n, \emph{An induction theorem for the unit groups of
  {B}urnside rings of 2-groups}, J. Algebra \textbf{289} (2005), no.~1,
  105--127. \MR{2139093}

\bibitem[Yos90]{Yoshida}
Tomoyuki Yoshida, \emph{On the unit groups of {B}urnside rings}, J. Math. Soc.
  Japan \textbf{42} (1990), no.~1, 31--64. \MR{1027539}

\bibitem[Zou21]{FolingClassify}
Foling Zou, \emph{Notes on equivariant bundles}, Expo. Math. \textbf{39}
  (2021), no.~4, 644--678. \MR{4340768}

\end{thebibliography}
\end{document}